\newtheorem{definition}{Definition}
\newtheorem{theorem}{Theorem}
\newtheorem{lemma}{Lemma}
\newtheorem{corollary}{Corollary}
\newtheorem{remark}{Remark}
\newtheorem{proposition}{Proposition}
\providecommand{\keywords}[1]
{
	\small	
	\textbf{\textit{Keywords---}} #1
}
\title{Equivalence of the Dirichlet and Neumann problems for the Laplace operator in planar doubly-connected regions}
\author{Claudiu DINICU}
\date{}
\begin{document}
	
	\maketitle 
	
	\pagestyle{myheadings}
	\markboth{Claudiu Dinicu}{Equivalence in planar doubly-connected regions}

	\begin{abstract}
		Motivated by recent results regarding the equivalence of the Dirichlet and Neumann problems for the Laplace operator in the case of simply connected regions, the present paper takes a step further and provides a similar equivalence between the above mentioned problems  in the case of planar doubly-connected regions. The equivalence means that solving any of these problems leads by an explicit formula to a solution of the other problem. In addition, sufficient conditions guaranteeing the uniform H{\"o}lder continuity of the higher order partial derivatives of the solutions to the Neumann problem are provided as a consequence of this equivalence. \\
		
		\keywords{Dirichlet problem, Neumann problem, Laplace operator, harmonic functions, analytic functions, conformal transformations.} \\
		
		\textit{\textbf{Mathematics Subject Classification (2010) -}} 31B05, 31B10, 35J05, 42B37, 30D05, 30C20.  \\
	\end{abstract}
	
	
	\section{Introduction}
	
	The Dirichlet and Neumann problems are fundamental in the theory of
	differential equations, while still capturing the interest of the mathematical community. Some representations of the solutions of the Dirichlet and Neumann problems which can be found in the literature are: by single/double layer potential and spherical harmonics (see for instance \cite[Chapters 2, 3]{Folland2}) and by probabilistic methods (see \cite{Bass1} for the Dirichlet problem and \cite{Bass2} for the Neumann problem). 
	
	Recently the connection between these problems was investigated and it was shown that in the case of the Laplace operator (and other differential operators satisfying certain homogeneity conditions) there is an equivalence between these problems, in the sense that solving one of them leads by an explicit formula to a solution of the other problem (for details see \cite{Beznea1}, \cite{Beznea2}). Moreover it was shown that this equivalence leads to a new probabilistic representation of the solution of the Neumann problem, for the case of the (unit) ball (see \cite{Beznea 3}). The domains taken into consideration in these papers were simply connected. In the present paper the author takes a step further and shows that a similar equivalence between the Dirichlet and the Neumann problems holds in the case of planar doubly-connected regions. Incidentally it is shown that this intimate connection can be used for proving the uniform H{\"o}lder continuity of the higher order partial derivatives of the solutions of the Neumann problem. This aspect is in concordance with a well-known theorem in potential theory which is attributed to O. D. Kellogg, where the smooth extension of the higher order partial derivatives of the solution of the Dirichlet problem, under some appropriate smoothness assumption, was investigated (see \cite{Kellogg}). The contributions of this paper can be summarized as follows:
	\begin{itemize}
		\item First, an equivalence between the solutions of the Dirichlet and Neumann problems for the Laplace operator in annular regions $\boldsymbol A_{r_1;r_2}$ satisfying $0<r_1<r_2<\infty$, formulated in polar coordinates, is provided (see Theorem \ref{main theorem}). \\
		\item Second, sufficient conditions for the uniform H{\"o}lder continuity of the higher order partial derivatives of the solutions of the Neumann problem \eqref{Neumann problem biss} are provided as a consequence of Theorem \ref{main theorem} (see Theorem \ref{Holder continuity polar theorem}). \\
		\item A consistent definition of the Neumann problem in the special case of the punctured unit disk is given in Definition \ref{Neumann_punctured} using a polar coordinates representation (see also Remark \ref{Remark Neumann punctured}).
		\item When the boundary data is symmetric, a simplified formula for a solution of the Neumann problem \eqref{Neumann problem biss} in terms of the solution of the Dirichlet problem \eqref{Dirichlet problem biss} is presented (see Theorem \ref{simplified formula Neumann}).
		\item An equivalence between the solutions of the Neumann and Dirichlet problems \eqref{Neumann problem} and \eqref{Dirichlet problem}, respectively, as well as sufficient conditions for the uniform H{\"o}lder continuity of the higher order partial derivatives of the solutions of the Neumann problem \eqref{Neumann problem} are provided in Theorem \ref{main result cartesian}, in the case of annular regions whose radii satisfy the above mentioned condition.
		\item As a consequence, two main results of \cite{Beznea1} (namely Theorems $1$ and $3$) are recaptured in the case of $\mathbb{R}^2$ as particular instances in Corollary \ref{main corollary}. In addition Corollary \ref{main corollary} also provides sufficient conditions for the uniform H{\"o}lder continuity of the higher order partial derivatives of the solutions of the Neumann problem \ref{Neumann problem} for the case of the (unit) disk.   
		\item For the general case of planar, doubly-connected regions, Theorem \ref{thm in the case of doubly connected regions} gives an equivalence of the solutions of the Dirichlet and the Neumann problems \eqref{Dirichlet problem} and \eqref{Neumann problem}, respectively, while Theorem \ref{smooth exension proposition doubly-connected} provides sufficient conditions for the uniform H{\"o}lder continuity of the higher order partial derivatives of the solutions of the Neumann problem \eqref{Neumann problem}, as a consequence of this equivalence.
	\end{itemize}
	
	The structure of the paper is the following. In Section \ref{Section Preliminaries} the notation is established and some preparations are made. In Section \ref{Section Main results} the author presents all the results of the paper which were announced above. Section \ref{Conclusions} draws some final conclusions and some potential research directions. 
	
	\section{Preliminaries \label{Section Preliminaries}}
	
	\subsection{Notations \label{Secion Notations}}
	
	Denote by $\mathbb{U}=\left\{ z\in \mathbb{C}:\left\vert
	z\right\vert <1\right\} $ the unit disk, by $\dot{\mathbb{U}}$ the punctured unit disk, by $C_r$ the circle of radius $r$ (centered in origin), and the annulus with radii $0\leq r_{1}<r_{2}$ by $\boldsymbol A_{r_{1},r_{2}}=\left\{ z\in \mathbb{C}:r_{1}<\left\vert z\right\vert
	<r_{2}\right\} $, respectively. In addition, for any region $\boldsymbol D$, $C^1(\overline{\boldsymbol D})$ will stand for the set of all functions $h \in C^1(\boldsymbol D)$ for which the gradient $\nabla h$ can be continuously extended to $\overline{\boldsymbol D}$, and if $\boldsymbol D$ is in addition smooth and bounded, $\sigma$ will be denoting the length measure on its boundary. If $\boldsymbol E$ is a subset of $\mathbb{C}$ or $\mathbb{R}$, then the real or complex valued function $f$ belongs to $C^{m, \alpha}(\boldsymbol E)$, for some non-negative integer $m$ and some $\alpha \in (0,1]$, if the $m$ order partial derivatives of $f$ exist and are locally $\alpha$ H{\" o}lder continuous on $\boldsymbol E$ (in the case $f$ is complex-valued, the derivative should be understood in the sense of Complex Analysis). Also if $w$ is any function defined on some set $X$, then we define $\|w\| = \sup\limits_{x \in X}|w(x)|$. Throughout the paper the author will switch between the complex and the $\mathbb{R}^2$ notations, depending on the context to discriminate between them. For example if $u$ is a harmonic function defined on some region containing the point $(\cos\theta,\sin\theta)$ then $ u(e^{i\theta})$ is a shorthand representing the value of $u$ in $(\cos\theta,\sin\theta)$. Next if $u$ is differentiable at $z = (r\cos\theta,r\sin\theta)$, $r > 0$, then the directional derivative of $u$ in the direction of the unit vector $(\cos\theta,\sin\theta)$ evaluated in $z$ will be denoted $\frac{\partial u}{\partial \boldsymbol a_{\theta}}(z)$. If $h_1$ and $h_2$ are any two functions then $h_1 \sim h_2$, $h_1 \lesssim h_2$ provided that there is some constant $C$ such that $h_1 = Ch_2$, and $h_1 \le Ch_2$, respectively. In the last case any constant $C$ with that property will be referred to as a \textit{proportionality constant}. Last but not least if $z$ is any complex number we will let $x$ denote its real part, $x = \Re(z)$, and $y$ denote its imaginary part, $y = \Im(z)$ (unless otherwise specified).
	
	\subsection{Definitions and preliminary aspects}
	
	Throughout the paper $\boldsymbol D \subset \mathbb{C}$ will denote a bounded, doubly-connected region of the complex plane.
	\begin{definition}\label{definition smooth region}
		We say that $\boldsymbol D \in C^{m,\alpha}$ for some non-negative integer $m$ and some real $\alpha \in (0,1]$ if for any point $a \in \partial \boldsymbol D$ there exists an open interval $\boldsymbol I$ and a real-valued function $\beta: \boldsymbol I \rightarrow (c,d)$ satisfying $\beta \in C^{m,\alpha}(\boldsymbol I)$, such that the set $\boldsymbol U := \boldsymbol I \times (c,d)$ is a neighborhood of $a$ and, eventually up to a rotation
		\begin{equation}
		\boldsymbol D \cap \boldsymbol U = \{(x,t): x \in \boldsymbol I, ~ t < \beta(x)\}.
		\end{equation}  
	\end{definition} 
	Notice that $\boldsymbol I$, $c$, $d$, as well as the function $\beta$ may depend on $a$.
	
	\begin{definition}\label{definition smooth boundary value}
		Let $f$ be a real-valued function defined on the boundary of $\boldsymbol D$ and assume $\boldsymbol D \in C^{m,\alpha}$. Then $f \in C^{m,\alpha}(\partial \boldsymbol D)$ if for any point $a \in \partial \boldsymbol D$ and any local parametrization $\beta$ as above the $m$ order derivative of the function $f \circ \beta$ is locally $\alpha$ H{\"o}lder continuous on $\boldsymbol I$. 
	\end{definition}
	
	If $\boldsymbol D$ is also smooth, consider the corresponding Dirichlet and Neumann problems for the Laplace operator
	\begin{equation}\label{Dirichlet problem}
	\begin{cases}
	\Delta u=0 &\text{ in } \boldsymbol D, \\
	u=g &\text{ on }\partial \boldsymbol D,
	\end{cases}
	\end{equation}
	and
	\begin{equation} \label{Neumann problem}
	\begin{cases}
	\Delta U=0 &\text{ in } \boldsymbol D, \\
	\frac{\partial U}{\partial \boldsymbol \nu }=f &\text{ on }\partial \boldsymbol D, 
	\end{cases}
	\end{equation}
	respectively, where $\boldsymbol \nu$ is the unitary outward normal to the boundary of $\boldsymbol D$. In the particular case when $\boldsymbol D = \boldsymbol A_{r_1;r_2}, ~r_1 > 0$, we have
	\begin{equation}
	\boldsymbol \nu \left( z\right) =
	\begin{cases}
	\frac{z}{r_2}, &\text{if } |z|=r_{2}, \\
	-\frac{z}{r_1}, &\text{if } |z|=r_{1}.
	\end{cases}
	\end{equation}
	
	By a solution of the Dirichlet/Neumann problems above, it is understood a function $u\in C^{2}(\boldsymbol D) \cap C^0(\overline{\boldsymbol D}) $, respectively $U \in C^{2}(\boldsymbol D) \cap C^1(\overline{\boldsymbol D})$, which satisfies (\ref{Dirichlet problem}), respectively (\ref{Neumann problem}).
	
	\begin{remark} Assume $\boldsymbol D$ is the annulus $\boldsymbol A_{r_1,r_2}, ~r_1 > 0$. Using the maximum principle for harmonic functions (see, e.g. \cite[Theorem
		2.2.4]{Evans}), it can be seen that for continuous boundary data $g$ the
		Dirichlet problem (\ref{Dirichlet problem}) has a unique solution. Also if $f$ is a continuous function satisfying $\int\limits_{\partial \boldsymbol D} f d\sigma = 0$ then it can be argued that the (classical) Neumann problem always has a solution, which is unique up to additive constants.
		
		The existence of solutions of the Dirichlet and the Neumann problems in the
		case of the punctured disk $\boldsymbol A_{0,r_{2}}$ requires special attention. As
		shown by Zaremba's example, for continuous boundary data $g$ and $r_{1}=0$, the Dirichlet problem (\ref{Dirichlet problem}) has a solution iff $g\left( 0\right) =\frac{1}{2\pi r_{2}}%
		\int_{0}^{2\pi }g\left( r_{2}e^{i\theta }\right) d\theta $. Also, for
		continuous boundary data $f$ and $r_{1}=0$, the boundary condition at the origin of the
		Neumann problem (\ref{Neumann problem}) should be ignored (the exterior
		normal to $\partial \boldsymbol A_{0,r_{2}}$ at the origin cannot be properly defined), and a solution of (\ref{Neumann problem}) satisfying the boundary condition just on $\partial \boldsymbol A_{0,r_{2}}\backslash \left\{ 0\right\} $ exists only if $%
		\int_{0}^{2\pi }f\left( r_2e^{i\theta }\right) d\theta =0$. 
	\end{remark}
	
	When $\boldsymbol D = \boldsymbol A_{r_1,r_2}$, due to the radial symmetry of the region, it is natural to consider polar coordinates $\left( r,\theta \right) $, defined by $r=\left\vert z\right\vert $ and $\theta \in Arg(z) := \{\arg(z) + 2k\pi : k \in \mathbb{Z}\}$ for $z\in \boldsymbol A_{r_{1},r_{2}}$. Here $arg(z)$ denotes the principal argument of $z \neq 0$, and this notation will be in force for the rest of the paper.
	
	The link between the cartesian and polar coordinates formulation of the
	Dirichlet and Neumann problems (\ref{Dirichlet problem}) -- (\ref{Neumann
		problem}), when $\boldsymbol D = \boldsymbol A_{r_1,r_2}$, is given by the following proposition.
	
	\begin{proposition} \label{equiv formulation in cartesian and polar coordinates} If $w\in
		C^{2}\left( \boldsymbol A_{r_{1}, r_2}\right) $ satisfies $\Delta w=0$ in $ \boldsymbol A_{r_{1},r_{2}}$, then the function $\hat{w}:(r_{1},r_{2})\times \mathbb{%
			R\rightarrow R}$ defined by $\hat{w}\left( r,\theta \right) =w\left(
		re^{i\theta }\right) $ is $2\pi $-periodic in the second variable, has
		continuous second order partial derivatives and satisfies%
		\begin{equation}\label{laplace eq in polar coordinates}
		\hat{w}_{\boldsymbol r \boldsymbol r}+\frac{1}{r}\hat{w}_{\boldsymbol r}+\frac{1}{r^{2}}\hat{w}_{\boldsymbol \theta \boldsymbol \theta
		} = 0\qquad \text{in }(r_{1},r_{2})\times \mathbb{R}.
		\end{equation}
		
		Conversely, if the function $\hat{w}:(r_{1},r_{2})\times \mathbb{%
			R\rightarrow R}$ is $2\pi $-periodic in the second variable, has continuous
		second order partial derivatives and satisfies (\ref{laplace eq in polar
			coordinates}), then the function $w: \boldsymbol A_{r_{1},r_{2}} \rightarrow \mathbb{R}$ defined by $w\left( z\right) = \hat{w}\left( \left\vert z\right\vert ,\arg(z) \right) $ belongs to $C^{2}\left( \boldsymbol A_{r_{1},r_{2}}\right) $ and satisfies $\Delta w=0$ in $\boldsymbol A_{r_{1},r_{2}}$.
		
		Moreover, $w$ has a continuous extension to $\overline{\boldsymbol A_{r_{1},r_{2}}}$
		if and only if $\hat{w}$ has a continuous extension to $\left[ r_{1},r_{2}\right]
		\times \mathbb{R}$ as well, and in this case
		\begin{equation*}
		w( re^{i\theta }) =\hat{w}\left( r,\theta \right) \quad \text{for
		}\left( r,\theta \right) \in \left[ r_{1},r_{2}\right] \times \mathbb{R}.
		\end{equation*}
		
		Also $w$ has (outer) normal derivative at a point $re^{i\theta}\in
		\partial \boldsymbol A_{r_{1},r_{2}}$ if and only if $\hat{w}$ has partial derivative with
		respect to the first variable at the point $\left( r,\theta \right)
		\in \left\{ r_{1},r_{2}\right\} \times \mathbb{R}$, and in this case%
		\begin{equation}\label{normal-derivatives}
		\frac{\partial w}{\partial \boldsymbol \nu }\left( re^{i\theta}\right) =\left\{
		\begin{array}{cc}
		\hat{w}_{\boldsymbol r}\left( r,\theta \right) , & \text{if }r=r_{2}, \\
		-\hat{w}_{\boldsymbol r}\left( r,\theta \right) , & \text{if }r=r_{1}.%
		\end{array}%
		\right. 
		\end{equation}
		
		Finally $w \in C^1\left(\overline{\boldsymbol A_{r_1;r_2}}\right)$ if and only if $\hat{w} \in C^1\left([r_1,r_2] \times \mathbb{R}\right)$.
	\end{proposition}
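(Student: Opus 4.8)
The plan is to read the whole statement as a change of variables under the polar-coordinate map $\Phi\colon(0,\infty)\times\r\to\mathbb{C}\setminus\{0\}$, $\Phi(r,\theta)=re^{i\theta}=(r\cos\theta,r\sin\theta)$, which is $C^\infty$, $2\pi$-periodic in its second variable, and has Jacobian determinant $r>0$; hence, for $r_1>0$ (the case $r_1=0$ requiring the separate care at the origin already signaled in the Remark, since there $\Phi$ collapses $\{0\}\times\r$), $\Phi$ restricts to a surjective local $C^\infty$-diffeomorphism from an open neighborhood of $[r_1,r_2]\times\r$ onto an open neighborhood of $\overline{\boldsymbol A_{r_1,r_2}}$, carrying $(r_1,r_2)\times\r$ onto $\boldsymbol A_{r_1,r_2}$. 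Because $\hat w=w\circ\Phi$, the $2\pi$-periodicity in $\theta$ is immediate from $e^{i(\theta+2\pi)}=e^{i\theta}$, and, $\Phi$ being a local diffeomorphism with $C^\infty$ local inverses, $\hat w\in C^2$ exactly when $w\in C^2$; in the converse direction the formula $w(z)=\hat w(|z|,\arg z)$ is well defined precisely because of that periodicity. The chain rule then yields the standard identity
\[
\hat w_{rr}(r,\theta)+\tfrac1r\hat w_r(r,\theta)+\tfrac1{r^2}\hat w_{\theta\theta}(r,\theta)=(\Delta w)(re^{i\theta}),
\]
from which the equivalence between $\Delta w=0$ in $\boldsymbol A_{r_1,r_2}$ and \eqref{laplace eq in polar coordinates} is read off in both directions.

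For the continuous-extension claim, I would use that $\Phi$ extends to a continuous surjection $[r_1,r_2]\times\r\to\overline{\boldsymbol A_{r_1,r_2}}$. If $w$ extends continuously to $\overline{\boldsymbol A_{r_1,r_2}}$, composing with this extension of $\Phi$ produces a continuous extension of $\hat w$ to $[r_1,r_2]\times\r$, and the asserted value formula holds there. Conversely, if $\hat w$ extends continuously to $[r_1,r_2]\times\r$, the extension stays $2\pi$-periodic in $\theta$, so $z\mapsto\hat w(|z|,\arg z)$ is a well-defined function on $\overline{\boldsymbol A_{r_1,r_2}}$; to verify continuity at a point $z_0$ I would pass to a neighborhood of $z_0$ on which a continuous branch $\psi$ of the argument exists (available because $z_0\neq0$), where the function coincides with $\hat w\circ(|\cdot|,\psi)$, a composition of continuous maps.

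The remaining assertions, namely the normal-derivative formula \eqref{normal-derivatives} and the $C^1$-up-to-the-boundary equivalence, are the only place where the bookkeeping has to be handled with care. Differentiating $\hat w=w\circ\Phi$ gives, for $r>0$,
\[
\hat w_r(r,\theta)=\nabla w(re^{i\theta})\cdot\boldsymbol a_\theta,\qquad \hat w_\theta(r,\theta)=r\,\nabla w(re^{i\theta})\cdot\boldsymbol a_\theta^{\perp},
\]
so $\hat w_r$ is exactly the radial directional derivative $\tfrac{\partial w}{\partial\boldsymbol a_\theta}$. Restricting $w$ to the radial segment through a boundary point $r_je^{i\theta}$ and using $\boldsymbol\nu(r_2e^{i\theta})=\boldsymbol a_\theta$, $\boldsymbol\nu(r_1e^{i\theta})=-\boldsymbol a_\theta$, one sees that the one-sided (outer) normal derivative of $w$ at $r_je^{i\theta}$ exists if and only if the one-sided $r$-derivative of $\hat w$ at $(r_j,\theta)$ does, and matching the signs gives precisely \eqref{normal-derivatives}. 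For the last equivalence, since $D\Phi(r,\theta)$ is an invertible $C^\infty$ matrix-valued function on a neighborhood of $[r_1,r_2]\times\r$ (again using $r_1>0$), the pair $(\hat w_r,\hat w_\theta)$ extends continuously to $[r_1,r_2]\times\r$ if and only if $\nabla w$ extends continuously to $\overline{\boldsymbol A_{r_1,r_2}}$; combined with the continuity of $\hat w$ already obtained, this yields $\hat w\in C^1([r_1,r_2]\times\r)\iff w\in C^1(\overline{\boldsymbol A_{r_1,r_2}})$. The one genuine nuisance threading the whole argument is the branch discontinuity of $\arg$, which is neutralized everywhere by the $2\pi$-periodicity of $\hat w$ together with the use of local continuous branches of the argument away from the origin; beyond that I do not anticipate any real obstacle.
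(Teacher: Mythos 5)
Your proof is correct and follows essentially the same route as the paper's: both are chain-rule arguments under the polar change of variables $\Phi(r,\theta)=re^{i\theta}$, with the normal-derivative formula obtained from one-sided radial difference quotients and the $C^1$-equivalence from the invertibility of $D\Phi$ away from the origin. The only difference is cosmetic: where the paper handles the branch discontinuity of the principal argument by explicit case analysis of $w_{\boldsymbol x}$, $w_{\boldsymbol y}$ near the negative real axis, you neutralize it with local continuous branches of the argument, which is tidier but mathematically equivalent.
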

	
	\begin{proof}
		The direct implication is immediate. For the converse, by using the $2\pi$-periodicity of $\hat{w}$ in the second variable and the fact that it has continuous second order partial derivatives, direct computations show that $w \in C^{2}\left( \boldsymbol A_{r_{1},r_{2}}\right)$. Also, it is not difficult to check that $$\Delta w(z)= \hat{w}_{rr}(|z|,\arg(z)) + \frac{1}{|z|} \hat{w}_r(|z|,\arg(z))+\frac{1}{|z|^2}\hat{w}_{\theta\theta}(|z|,\arg(z)) = 0,~~ z \in \boldsymbol A_{r_{1},r_{2}},$$
		where the last equality follows by using hypothesis (\ref{laplace eq in polar coordinates}). 
		
		The fact that $w$ has a continuous extension to the boundary of the domain if and only if $\hat{w}$ has is immediate. 
		
		Next notice that for any $\theta \in \mathbb{R}$ the corresponding directional derivatives are given by: \\
		
		$\begin{cases} \frac{\partial w}{\partial \boldsymbol a_{\theta}}(re^{i\theta}) &= \lim\limits_{t \rightarrow 0} \frac{w(r(cos\theta,sin\theta)+t(cos\theta,sin\theta)) - w(r(cos\theta,sin\theta))}{t} = \hat{w}_{\boldsymbol r}(r,\theta), \\
		\frac{\partial w}{\partial \boldsymbol \nu}(r_2e^{i\theta}) &= \lim\limits_{t \nearrow 0} \frac{w(r_2(cos\theta,sin\theta)+t(cos\theta,sin\theta)) - w(r_2(cos\theta,sin\theta))}{t} = \hat{w}_{\boldsymbol r}(r_2,\theta), \\
		\frac{\partial w}{\partial \boldsymbol \nu}(r_1e^{i\theta}) &= -\lim\limits_{t \searrow 0} \frac{w(r_1(cos\theta,sin\theta)+t(cos\theta,sin\theta)) - w(r_1(cos\theta,sin\theta))}{t} = -\hat{w}_{\boldsymbol r}(r_1,\theta), \end{cases}$ \\
		where the first and second equalities above hold for any $r\in (r_1,r_2)$.
		
		For the last claim it can also be checked by direct computations that the following equalities hold:
		\begin{equation}\label{lengthy computations}
		\begin{cases}
		w_{\boldsymbol x}\left(re^{i\theta}\right) = -\frac{r \sin\theta}{r^2}\hat{w}_{\boldsymbol \theta}(r,\theta) + \frac{r \cos\theta}{r}\hat{w}_{\boldsymbol r}(r,\theta) &~\text{if } ~\cos\theta <0, ~\sin\theta \neq 0, \\
		w_{\boldsymbol y}\left(re^{i\theta}\right) = \frac{r \cos\theta}{r^2}\hat{w}_{\boldsymbol \theta}(r,\theta) + \frac{r \sin\theta}{r}\hat{w}_{\boldsymbol r}(r,\theta) &~\text{if } \cos\theta <0, ~\sin\theta \neq 0, \\
		w_{\boldsymbol x}\left(re^{-i\pi}\right) = -\hat{w}_{\boldsymbol r}(-r,-\pi), \\
		w_{\boldsymbol y}\left(re^{-i\pi}\right) = -\frac{\hat{w}_{\boldsymbol \theta}(-r,-\pi)}{r}.
		\end{cases}
		\end{equation}
		Combining equations \eqref{lengthy computations} with the fact that $\arg(\cdot)$ is harmonic in $\mathbb{C} \setminus \{z: \Re(z) < 0, ~ \Im(z) = 0\}$ (and hence $\arg(\cdot) \in C^2 \left(\mathbb{C} \setminus \{z: \Re(z) < 0, ~ \Im(z) = 0\} \right)$) and $\hat{w}_{\boldsymbol r}$, $\hat{w}_{\boldsymbol \theta}$ are $2\pi$-periodic in the second variable, the conclusion follows. This ends the proof.
	\end{proof}
	
	The above result shows that in the case of annuli one can reformulate the Dirichlet and the Neumann problems (\ref{Dirichlet problem}) -- (\ref{Neumann problem}) in polar coordinates as follows: find $u=u\left( r,\theta
	\right) \in C^{2}\left( \left( r_{1},r_{2}\right) \times \mathbb{R}\right)
	\cap C^0 \left( \left[ r_{1},r_{2}\right] \times \mathbb{R}\right) $ which is $2\pi$-periodic in the second variable and satisfies
	\begin{equation}\label{Dirichlet problem biss}
	\begin{cases}
	u_{\boldsymbol r \boldsymbol r}+\frac{1}{r}u_{\boldsymbol r}+\frac{1}{r^{2}}u_{\boldsymbol \theta \boldsymbol \theta }=0 &~\text{in }
	(r_{1},r_{2})\times \mathbb{R}, \\
	u=\varphi &~\text{on } \left\{ r_{1},r_{2}\right\} \times \mathbb{R},
	\end{cases} 
	\end{equation}
	respectively find $U=U\left( r,\theta \right) \in C^2\left((r_1,r_2)\times \mathbb{R}\right) \cap C^1([r_1,r_2]\times \mathbb{R}) $ which is $2\pi $-periodic in the second variable and satisfies
	\begin{equation}\label{Neumann problem biss}
	\begin{cases}
	U_{\boldsymbol r \boldsymbol r}+\frac{1}{r}U_{\boldsymbol r}+\frac{1}{r^{2}}U_{\boldsymbol \theta \boldsymbol \theta }=0 &~\text{in }
	(r_{1},r_{2})\times \mathbb{R}, \\
	U_{\boldsymbol r}=\phi &~\text{on }\left\{ r_{1},r_{2}\right\} \times \mathbb{R}.
	\end{cases}
	\end{equation}
	and the boundary data $\varphi ,\phi :\left\{ r_{1},r_{2}\right\} \times
	\mathbb{R}$ is related to the boundary data \\
	$f,g:\partial A_{r_{1},r_{2}}\rightarrow \mathbb{R}$ in (\ref{Dirichlet problem}) -- (\ref{Neumann problem}) by
	\begin{equation*}
	\varphi (r,\theta) =g( re^{i\theta }) \quad \text{%
		and\quad }\phi (r,\theta) =\left\{
	\begin{array}{cc}
	f( re^{i\theta })  & \text{if }r=r_{2}, \\
	-f( re^{i\theta })  & \text{if }r=r_{1}.
	\end{array}
	\right.
	\end{equation*}%
	Notice that, in particular, the functions $\varphi ,\phi $ are $2\pi $%
	-periodic in the second variable.
	
	\begin{remark}
		The compatibility condition $\int_{\partial \boldsymbol A_{r_11,r_2}}f~d\sigma=0$ for the existence of a
		solution of the Neumann problem (\ref{Neumann problem}) in cartesian
		coordinates becomes, in polar coordinates, the following:%
		\begin{equation}
		\int_{0}^{2\pi }r_1\phi \left( r_{1},\theta \right) d\theta =\int_{0}^{2\pi
		}r_2\phi \left( r_{2},\theta \right) d\theta .
		\end{equation}
	\end{remark}
	
	\section{Main results\label{Section Main results}}
	
	This section is divided into two parts: Subsection $3.1$ is devoted to the study of the equivalence between the solutions of the Dirichlet and Neumann problems in the case of annular regions, while Subsection $3.2$ is devoted to the study of the equivalence of these two problems for general doubly connected regions.
	\subsection{Annular regions}
	
	At this point we are prepared to state and prove the main result of this section.
	
	\begin{theorem}\label{main theorem}
		Let $0<r_{1}<r_{2}<\infty$ and assume $\phi :\left\{r_{1},r_{2}\right\} \times \mathbb{R}\rightarrow \mathbb{R}$ is continuous, $2\pi $-periodic in the second variable, and satisfies the compatibility condition $\int\limits_{0}^{2\pi
		}r_1\phi \left( r_{1},\theta \right) d\theta =\int\limits_{0}^{2\pi }r_2\phi
		\left( r_{2},\theta \right) d\theta $. If $U$ is the solution of the Neumann problem (\ref{Neumann problem biss}) with boundary data $\phi$, satisfying
		$U(\sqrt{r_{1}r_{2}},0) =0$, then for any $\left(r,\theta\right) \in \left[r_{1},r_{2}\right] \times \mathbb{R}$
		\begin{equation}\label{U definition Th1}
		U(r,\theta )=\int\limits_{\frac{\sqrt{r_{1}r_{2}}}{r}}^{1}\frac{u(r\rho,\theta )}{\rho }d\rho +\sqrt{r_{1}r_{2}}\int\limits_{0}^{\theta }\left( \mathcal{C} - \int\limits_{0}^{t}u_{\boldsymbol r}(\sqrt{r_{1}r_{2}},\tau )d\tau \right)dt,
		\end{equation}
		where $u$ is the solution of the Dirichlet problem (\ref{Dirichlet problem biss}%
		) with boundary values $\varphi(r,\theta) = r\phi(r,\theta)$ on $\left\{ r_{1},r_{2}\right\} \times \mathbb{R}$ and
		\begin{equation}\label{first appearance of C}
		\mathcal{C}=\frac{\sqrt{r_{1}r_{2}}}{2\pi }\int\limits_{0}^{2\pi
		}\int\limits_{0}^{t}u_{\boldsymbol r}(\sqrt{r_{1}r_{2}},\tau )d\tau dt.
		\end{equation}
		Conversely if $\varphi :\left\{ r_{1},r_{2}\right\} \times \mathbb{R}%
		\rightarrow \mathbb{R}$ is continuous, $2\pi $-periodic in the second
		variable, satisfies $\int\limits_{0}^{2\pi }\varphi \left( r_{1},\theta
		\right) d\theta =\int\limits_{0}^{2\pi }\varphi \left( r_{2},\theta \right)
		d\theta $, and if $U$ is a solution of the Neumann problem (\ref{Neumann
			problem biss}) with $\phi \left( r,\theta \right) =\frac{\varphi \left( r,\theta
			\right)}{r} $ for $\left( r,\theta \right) \in \left\{ r_{1},r_{2}\right\}
		\times \mathbb{R}$, then
		\begin{equation}\label{u Thm1}
		u\left( r,\theta \right) =rU_{\boldsymbol r}\left( r,\theta \right) ,\qquad \left(
		r,\theta \right) \in \left[ r_{1},r_{2}\right] \times \mathbb{R},
		\end{equation}
		is the solution of the Dirichlet problem (\ref{Dirichlet problem biss}).
	\end{theorem}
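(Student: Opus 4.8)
The argument rests on two elementary identities linking a function $U$ that satisfies \eqref{laplace eq in polar coordinates} with its Euler derivative $u:=rU_{\boldsymbol r}$, together with the uniqueness statements recalled above. First I would record that any solution $U$ of \eqref{Neumann problem biss} is automatically real-analytic on $(r_1,r_2)\times\mathbb{R}$: by Proposition \ref{equiv formulation in cartesian and polar coordinates} the function $z\mapsto U(|z|,\arg z)$ is harmonic on $\boldsymbol A_{r_1,r_2}$, hence $C^{\infty}$, and this smoothness carries back through the real-analytic map $(r,\theta)\mapsto re^{i\theta}$. Thus every partial derivative of $U$ used below exists, is continuous, and is $2\pi$-periodic in $\theta$, and differentiating \eqref{laplace eq in polar coordinates} is legitimate.

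For the converse implication, put $v:=U_{\boldsymbol r}$; differentiating \eqref{laplace eq in polar coordinates} for $U$ in $r$ and using $\tfrac1{r^{2}}U_{\boldsymbol\theta\boldsymbol\theta}=-U_{\boldsymbol r\boldsymbol r}-\tfrac1r U_{\boldsymbol r}$ to eliminate the second $\theta$-derivative gives $v_{\boldsymbol r\boldsymbol r}+\tfrac3r v_{\boldsymbol r}+\tfrac1{r^{2}}v+\tfrac1{r^{2}}v_{\boldsymbol\theta\boldsymbol\theta}=0$, and a one-line expansion then shows that $u=rv$ satisfies \eqref{laplace eq in polar coordinates} on $(r_1,r_2)\times\mathbb{R}$. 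Because $U\in C^{1}([r_1,r_2]\times\mathbb{R})$, the product $u=rU_{\boldsymbol r}$ extends continuously to $[r_1,r_2]\times\mathbb{R}$, and there the condition $U_{\boldsymbol r}=\phi=\varphi/r$ forces $u=\varphi$ on $\{r_1,r_2\}\times\mathbb{R}$. Hence $u$ is a $2\pi$-periodic element of $C^{2}((r_1,r_2)\times\mathbb{R})\cap C^{0}([r_1,r_2]\times\mathbb{R})$ solving \eqref{Dirichlet problem biss} with data $\varphi$; by uniqueness of the Dirichlet solution it is \emph{the} solution, which proves \eqref{u Thm1}. (One can also see this by expanding $U$ in a Fourier series in $\theta$ with radial profiles among $\{1,\ln r,r^{|n|},r^{-|n|}\}$ and noting that $rU_{\boldsymbol r}$ is again of this admissible form, with the matching boundary coefficients.)

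For the direct implication, let $U$ be the solution of \eqref{Neumann problem biss} with data $\phi$ normalized by $U(\sqrt{r_1r_2},0)=0$ (it exists by the compatibility hypothesis and is unique up to an additive constant, which the normalization removes), and let $u$ solve \eqref{Dirichlet problem biss} with $\varphi=r\phi$. Applying the converse step to this pair gives $U_{\boldsymbol r}=u/r$ on all of $[r_1,r_2]\times\mathbb{R}$; integrating this in $r$ from $\sqrt{r_1r_2}$ to $r$ and substituting $s=r\rho$ produces $U(r,\theta)=U(\sqrt{r_1r_2},\theta)+\int_{\sqrt{r_1r_2}/r}^{1}\tfrac{u(r\rho,\theta)}{\rho}\,d\rho$, which is the first summand of \eqref{U definition Th1} together with the missing term $U(\sqrt{r_1r_2},\theta)$. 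To identify that term, observe that $U_{\boldsymbol r}=u/r$ combined with \eqref{laplace eq in polar coordinates} yields $U_{\boldsymbol\theta\boldsymbol\theta}(r,\theta)=-r\,u_{\boldsymbol r}(r,\theta)$; specializing to the interior radius $r=\sqrt{r_1r_2}$ and integrating twice in $\theta$, the two constants of integration are pinned down by $U(\sqrt{r_1r_2},0)=0$ and by $\int_0^{2\pi}U_{\boldsymbol\theta}(\sqrt{r_1r_2},t)\,dt=0$ (the $2\pi$-periodicity of $\theta\mapsto U(\sqrt{r_1r_2},\theta)$), and these choices reproduce exactly the second summand of \eqref{U definition Th1} with the constant $\mathcal C$ of \eqref{first appearance of C}.

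The computations above are routine differentiations and integrations; the one place that needs genuine care is the constant bookkeeping in the last step --- checking that harmonicity forces the correction term to be precisely the displayed iterated $\theta$-integral of $u_{\boldsymbol r}(\sqrt{r_1r_2},\cdot)$, and that requiring $2\pi$-periodicity together with the normalization pins its proportionality constant down to $\mathcal C$. (Were one instead to verify directly that the right-hand side of \eqref{U definition Th1} solves \eqref{Neumann problem biss}, the additional obstacle would be to show it lies in $C^{1}$ up to $\{r_1,r_2\}\times\mathbb{R}$ with radial derivative $\phi$ there, since $\varphi$ is assumed merely continuous; routing through the converse step avoids this, as the Neumann solution is known a priori to be $C^{1}$ up to the boundary.)
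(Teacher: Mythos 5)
Your argument is correct, but it runs in the opposite direction from the paper's and is a genuinely different proof. The paper takes the right-hand side of \eqref{U definition Th1} as a candidate, reduces by scaling to the normalized annulus $r_2=1/r_1$, and verifies head-on that the candidate solves \eqref{Neumann problem biss}: it proves $\int_0^{2\pi}u_{\boldsymbol r}(\sqrt{r_1r_2},\tau)\,d\tau=0$ from the compatibility condition via the $\alpha\log r+\beta$ mean-value property of harmonic functions on circles (to get $2\pi$-periodicity of the correction term), checks \eqref{laplace eq in polar coordinates} and the radial derivative on $\{r_1,r_2\}\times\mathbb{R}$ by differentiating under the integral, and then --- the costly step you correctly anticipate --- establishes $C^1$ regularity up to the boundary by comparing the candidate with an arbitrary Neumann solution through Green's first identity on an exhausting sequence of sub-annuli; only afterwards does it identify the candidate with $U$, and the converse is read off by differentiating \eqref{U definition Th1}. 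You instead prove the converse first by a direct computation on \eqref{laplace eq in polar coordinates} (showing $rU_{\boldsymbol r}$ is again a polar-harmonic function with boundary values $\varphi$), and then recover \eqref{U definition Th1} by integrating $U_{\boldsymbol r}=u/r$ radially and pinning down $U(\sqrt{r_1r_2},\cdot)$ from $U_{\boldsymbol\theta\boldsymbol\theta}=-ru_{\boldsymbol r}$, periodicity, and the normalization. This is shorter and sidesteps the boundary-regularity verification entirely, but only because you take the existence and $C^1$-up-to-the-boundary regularity of the Neumann solution as given (which the theorem's hypothesis permits), whereas the paper's verification is in effect also a constructive existence proof; your route buys brevity at the price of constructing nothing.

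One bookkeeping point you should not wave through: carried out honestly, your double integration gives $U(\sqrt{r_1r_2},\theta)=\mathcal{C}\,\theta-\sqrt{r_1r_2}\int_0^{\theta}\int_0^{t}u_{\boldsymbol r}(\sqrt{r_1r_2},\tau)\,d\tau\,dt$ with $\mathcal{C}$ as in \eqref{first appearance of C}, so the coefficient of $\theta$ is $\mathcal{C}$ and not the $\sqrt{r_1r_2}\,\mathcal{C}$ that a literal reading of \eqref{U definition Th1} produces. Your version is the one actually forced by $2\pi$-periodicity of $U(\sqrt{r_1r_2},\cdot)$; the displayed pair \eqref{U definition Th1}--\eqref{first appearance of C} is self-consistent only when $r_1r_2=1$, the normalized case the paper computes in before rescaling. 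So replace the claim that your computation reproduces the second summand ``exactly'' with the observation that it reproduces it up to this normalization of $\mathcal{C}$ --- a discrepancy your method has the merit of exposing.
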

	
	\begin{proof}
		Denote by $\mathcal{U}$ the right-hand side of \eqref{U definition Th1}. Let us first consider that $r_2=\frac{1}{r_1}=a>1$, in which case the problem
		reduces to showing that the function
		\begin{equation}
		\mathcal{U}(r,\theta) = \int\limits_{\frac{1}{r}}^1 \frac{u(r\rho,\theta)}{\rho}d\rho
		+ \int\limits_0^{\theta}\left( \mathcal{C} - \int\limits_0^t u_{\boldsymbol r}(1,\tau)d\tau \right)dt,
		\end{equation}
		satisfying $\mathcal{U}(1,0)=0$ is the desired solution of the Neumann problem \eqref{Neumann problem biss} on $\boldsymbol A_{\frac{1}{a};a}$ with boundary data 
		\begin{equation*}
		\phi (r,\theta) =
		\begin{cases}
		f( re^{i\theta }) &~\text{if } r=a, ~\theta \in \mathbb{R}, \\
		-f( re^{i\theta }) &~\text{if } r=\frac{1}{a}, ~\theta \in \mathbb{R},
		\end{cases}
		\end{equation*}
		where $u$ is the solution of the Dirichlet problem \eqref{Dirichlet problem biss} with boundary values $\varphi(r,\theta)=r\phi(r,\theta)$ on $\left\{ \frac{1}{a},a\right\}
		\times \mathbb{R}$. To this end we will first show that 
		\begin{equation}\label{eq1 Th1}
		\int\limits_0^{2\pi}u_{\boldsymbol r}(1,\tau)d\tau = 0.
		\end{equation}
		Indeed using the definition of $\mathcal{U}$ we get $\mathcal{U}_{\boldsymbol r}(r,\theta) = \frac{\partial}{\partial r}\int\limits_{
			\frac{1}{r}}^1 \frac{u(r\rho,\theta)}{\rho}d\rho = $ \\
		$\frac{\partial}{\partial r
		}\int\limits_1^r \frac{u(\rho,\theta)}{\rho}d\rho = \frac{u(r,\theta)}{r}, ~ (r,\theta) \in \left(\frac{1}{a},a\right) \times \mathbb{R}$. Consequently, the partial derivative of $\mathcal{U}$ with respect to the first variable can be continuously extended to $\left[\frac{1}{a},a\right] \times \mathbb{R}$ and thus for any $\theta \in \mathbb{R}$ one obtains $%
		\mathcal{U}_{\boldsymbol r}(a,\theta)= \frac{u(a,\theta)}{a}= \frac{\varphi(a,\theta)}{a}=\phi(a,\theta)$ and also $\mathcal{U}_{\boldsymbol r}(\frac{1}{a},\theta) = \frac{u(\frac{1}{a},\theta)%
		}{\frac{1}{a}}= \frac{\varphi(\frac{1}{a},\theta)%
		}{\frac{1}{a}} = \phi(\frac{1}{a},\theta)$. Define $W: \boldsymbol A_{\frac{1}{a};a} \rightarrow \mathbb{R}, ~ W(z) := u(|z|,\arg(z))$. Since $u$ is the solution of the Dirichlet problem \eqref{Dirichlet problem biss} it follows by Proposition \ref{equiv formulation in cartesian and polar coordinates} that $W$ is harmonic in $\boldsymbol A_{\frac{1}{a};a}$ and using a continuity argument $W(re^{i\theta})=\varphi(r,\theta), ~\forall (r,\theta) \in \{\frac{1}{a},a\}$. Then there exist real constants $\alpha,\beta \in \mathbb{R}$ such that $\int\limits_0^{2\pi}W(re^{i\theta})d\theta = \alpha \log r + \beta, ~\forall r \in \left[\frac{1}{a},a\right]$ (see \cite[Chapter 4, Theorem 20]{Ahlfors}). But then $-\alpha \log a + \beta = \int\limits_{C_{\frac{1}{a}}}W\left(\frac{1}{a}e^{i\theta}\right)d\theta =
		\int\limits_0^{2\pi}u\left(\frac{1}{a},\theta\right)d\theta =
		\int\limits_0^{2\pi}\frac{1}{a}\phi\left(\frac{1}{a},\theta\right)d\theta = \int\limits_0^{2\pi}a\phi(a,\theta) d\theta = \int\limits_{C_a}W\left(ae^{i\theta}\right) = \alpha \log a + \beta$ which implies $\alpha = 0$. To sum up $\int\limits_{C_r}W\left(re^{i\theta}\right)d\theta = \int\limits_0^{2\pi}u(r,\theta)d\theta$ is a constant function of $r$. Taking the
		derivative it follows that $\frac{d}{dr}\int\limits_0^{2\pi}u(r,\theta)d\theta = 0$. Since $1 \in \left(\frac{1}{a},a\right)$, an application of the \textit{Dominant Convergence} theorem together with the above identity concludes the proof of \eqref{eq1 Th1}. \\
		
		The next step is to show that whenever $(r,\theta) \in \left(\frac{1}{a},a\right) \times \mathbb{R}$ 
		\begin{equation}\label{eq2 Th1}
		\mathcal{U}(r,\theta + 2\pi) = \mathcal{U}(r,\theta).
		\end{equation}
		To this end compute $\mathcal{U}(r,\theta + 2\pi) = \int\limits_{\frac{1}{r}}^1 \frac{u(r\rho,\theta + 2\pi)}{\rho}d\rho - \int\limits_0^{\theta +
			2\pi}\int\limits_0^t u_{\boldsymbol r}(1,\tau)d\tau dt + \mathcal{C}(\theta + 2\pi) = \int\limits_{\frac{1}{r}}^1 \frac{u(r\rho,\theta)}{\rho}d\rho -
		\int\limits_0^{\theta}\int\limits_0^t u_{\boldsymbol r}(1,\tau)d\tau + \mathcal{C} \theta - \int\limits_{\theta}^{\theta+2\pi}\int\limits_0^tu_{\boldsymbol r}(1,\theta)d\tau dt + 2\pi \mathcal{C}$. As $u(r,\theta)=u(r,\theta + 2\pi)$, $u_{\boldsymbol r}(r_0,\theta + 2\pi)= \lim\limits_{r \rightarrow r_0} \frac{u(r,\theta + 2\pi)-u(r_0,\theta+2\pi)}{r-r_0} = \lim\limits_{r \rightarrow r_0} \frac{u(r,\theta)-u(r_0,\theta)}{r-r_0} = u_{\boldsymbol r}(r_0,\theta) ~\forall~ (r_0,\theta)$.
		Thus, $u_{\boldsymbol r}(1,\cdot)$ is $2\pi$-periodic and so $0 = \int\limits_t^{t+2\pi}u_{\boldsymbol r}(1,%
		\tau)d\tau =$ \\
		$\int\limits_0^{2\pi}u_{\boldsymbol r}(1,\tau)d\tau$. Consequently it follows that the function $t \rightarrow \int\limits_0^t
		u_{\boldsymbol r}(1,\tau)d\tau$ is $2\pi$-periodic, showing in turn that
		\begin{equation*}
		\int\limits_{\theta}^{\theta+2\pi}\int\limits_0^tu_{\boldsymbol r}(1,\tau)d\tau dt = \int\limits_0^{2\pi}\int\limits_0^tu_{\boldsymbol r}(1,\tau)d\tau dt = 2\pi \mathcal{C},
		\end{equation*}
		which proves relation \eqref{eq2 Th1}. \\
		
		Proceeding further we need to show that $\mathcal{U}$ satisfies \eqref{laplace eq in polar coordinates} for any pair $(r,\theta)$ in $\left(\frac{1}{a},a\right) \times \mathbb{R}$. But this follows easily using the Leibniz-Newton formula in the definition \eqref{U definition Th1} of $\mathcal{U}$, which gives $\mathcal{U}_{\boldsymbol r}(r,\theta) = \frac{u(1,\theta)}{r} + \int\limits_{\frac{1}{r}}^1 u_{\boldsymbol r}(r\rho,\theta)d\rho$, $\mathcal{U}_{\boldsymbol r \boldsymbol r}(r,\theta) = -\frac{u(1,\theta)}{r^2}+\frac{u_{\boldsymbol r}(1,\theta)}{r^2}+\int\limits_{\frac{1}{r}}^1 \rho u_{\boldsymbol r \boldsymbol r}(r\rho,\theta)d\rho$, and also $\mathcal{U}_{\boldsymbol \theta \boldsymbol \theta}(r,\theta) = -u_{\boldsymbol r}(1,\theta) + \int\limits_{\frac{1}{r}}^1 \frac{u_{\boldsymbol \theta \boldsymbol \theta}(r\rho,\theta)}{\rho}d\rho$, whenever $(r,\theta) \in \left(\frac{1}{a},a\right) \times \mathbb{R}$. Adding them up one obtains $\mathcal{U}_{\boldsymbol r \boldsymbol r}(r,\theta)+\frac{1}{r}\mathcal{U}_{\boldsymbol r}(r,\theta) + \frac{1}{r^2}\mathcal{U}_{\boldsymbol \theta \boldsymbol \theta}(r,\theta) = \int\limits_{\frac{1}{r}}^1\rho\left(u_{\boldsymbol r \boldsymbol r}(r\rho,\theta)+\frac{1}{r\rho}u_{\boldsymbol r}(r\rho,\theta)+\frac{1}{r^2\rho^2}u_{\boldsymbol \theta \boldsymbol \theta}(r\rho,\theta)\right)d\rho$, where the quantity on the right-hand side is identically $0$ since $u$ verifies relation \eqref{laplace eq in polar coordinates}. Let us now show that the derivative of $\mathcal{U}$ with respect to the first argument exists, is finite, and equals $\phi(r,\theta)$ at all points $(r,\theta) \in \{r_1,r_2\} \times \mathbb{R}$. Indeed $\lim\limits_{r \nearrow a}\frac{\mathcal{U}(r,\theta)-\mathcal{U}(a,\theta)}{r-a}= \phi(a,\theta)$, and likewise $\lim\limits_{r \searrow \frac{1}{a}}\frac{\mathcal{U}(r,\theta)-\mathcal{U}( \frac{1}{a},\theta)}{r-\frac{1}{a}}= \phi( \frac{1}{a},\theta)$. It only remains to be proved that the partial derivatives of $\mathcal{U}$ extend continuously to $[r_1,r_2]\times \mathbb{R}$. To see that this is indeed the case define $\tilde{\mathcal{U}}(re^{i\theta}) = \mathcal{U}(r,\theta), ~(r,\theta) \in (r_1,r_2) \times \mathbb{R}$, and also $f(re^{i\theta}) = \begin{cases} \phi(r,\theta) ~&\text{if } r=r_2, \\ -\phi(r,\theta) ~&\text{if } r=r_1. \end{cases}$ Thus, using Proposition \ref{equiv formulation in cartesian and polar coordinates}, it can be easily seen that $\tilde{\mathcal{U}}$ is harmonic on $\boldsymbol A_{r_1,r_2}$ and that the directional derivative of $\tilde{\mathcal{U}}$ along any ray is $f$. Let $V$ be any solution of the Neumann problem \eqref{Neumann problem} on $\boldsymbol A_{r_1,r_2}$ having boundary data $f$. It will be proved that $W := \tilde{\mathcal{U}}-V$ is constant on $\boldsymbol A_{r_1,r_2}$. Indeed let $r_{1;n}$ and $r_{2;n}$ be two sequences with positive terms such that $r_{n;1}$ is decreasing, $r_{n;1} \rightarrow r_1$, and $r_{n;2}$ is increasing, $r_{n;2} \rightarrow r_2$, respectively and denote $\boldsymbol A_n = \boldsymbol A_{r_{1;n},r_{2;n}}$. According to \textit{Green's first identity} applied to $W$ on $\boldsymbol A_n$ it follows that $\int\limits_{\boldsymbol A_n} \left( W\Delta W + \|\nabla W\|^2\right) dm = \int\limits_{\partial \boldsymbol A_n}\frac{\partial W}{\partial \boldsymbol \nu} d\sigma$ and since $\Delta W=0$ on $\boldsymbol A_n$ 
		\begin{equation}\label{Green's first identity Th1}
		\int\limits_{\boldsymbol A_n} \|\nabla W\|^2 dm = \int\limits_{\partial \boldsymbol A_n} \frac{\partial W}{\partial \boldsymbol \nu}d\sigma,
		\end{equation} 
		where $m$ is the Lebesgue measure. Since the sets $\boldsymbol A_n$ increase to $\boldsymbol A_{r_1,r_2}$, the sequence of non-negative real-valued functions $1_{\boldsymbol A_n}\|\nabla W\|^2$ increases to the function $1_{\boldsymbol A_{r_1,r_2}}\|\nabla W\|^2$ (where $1_{\boldsymbol E}$ is the indicator function of the set $\boldsymbol E \subset \mathbb{C}$) and hence an application of the \textit{Monotone Convergence} theorem to the left-hand side of \eqref{Green's first identity Th1} gives
		\begin{equation}\label{Green's first identity Th1 continuare}
		\int\limits_{\boldsymbol A_{r_1,r_2}} \|\nabla W\|^2 dm = \lim\limits_{n \rightarrow \infty}\int\limits_{\boldsymbol A_n} \|\nabla W\|^2 dm = \lim\limits_{n \rightarrow \infty}\int\limits_{\partial \boldsymbol A_n} \frac{\partial W}{\partial \boldsymbol \nu}d\sigma.
		\end{equation}
		On the other hand one can notice that on $\partial \boldsymbol A_n$ the normal derivative of $W$ is given by $\frac{\partial W}{\partial \boldsymbol \nu} = \frac{\partial \tilde{\mathcal{U}}}{\partial \boldsymbol \nu} - \frac{\partial V}{\partial \boldsymbol \nu} = \frac{\partial \tilde{\mathcal{U}}}{\partial \boldsymbol \nu} - \langle \nabla V;\boldsymbol \nu \rangle$. By definition $\nabla V$ extends continuously to $\overline{\boldsymbol A_{r_1,r_2}}$. Also, in view of Proposition \ref{equiv formulation in cartesian and polar coordinates} $\frac{\partial \tilde{\mathcal{U}}}{\partial \boldsymbol \nu}(re^{i\theta}) = \begin{cases}\mathcal{U}_{\boldsymbol r}(re^{i\theta}) ~&\text{if } r=r_{n;2} \\ -\mathcal{U}_{\boldsymbol r}(re^{i\theta}) ~&\text{if } r=r_{n;1}\end{cases}$ and since it has been already shown that $\mathcal{U}_{\boldsymbol r}(r,\theta) = \frac{u(r,\theta)}{r}$ it follows that $\frac{\partial \tilde{\mathcal{U}}}{\partial \boldsymbol \nu}$ extends continuously to $\boldsymbol A_{r_1,r_2}$ as well and one can conclude that $\frac{\partial W}{\partial \boldsymbol \nu}$ is bounded on $\boldsymbol A_{r_1,r_2}$. Consequently using the \textit{Dominant Convergence} theorem in \eqref{Green's first identity Th1 continuare} it follows that $\nabla W=0$ in $\boldsymbol A_{r_1,r_2}$ and hence $\tilde{\mathcal{U}} \in C^1(\overline{\boldsymbol A_{r_1,r_2}})$. Invoking again Proposition \eqref{equiv formulation in cartesian and polar coordinates} shows that $\mathcal{U} \in C^1([r_1,r_2]\times\mathbb{R})$, as desired. This completes the proof of the first part in the case $r_2=a > 1 > \frac{1}{a}=r_1$. \\
		
		For the general case $0 < r_1 < r_2$ define $\lambda = \frac{1}{\sqrt{r_1r_2}}$, $a = \sqrt{\frac{r_2}{r_1}}$ and let $\hat{u}$ be the solution of the Dirichlet problem \eqref{Dirichlet problem biss} on $\boldsymbol A_{\frac{1}{a};a}$ with boundary data $\hat{\varphi}(r,\theta)=\varphi(\frac{r}{\lambda},\theta)=\frac{r}{\lambda}\phi
		(\frac{r}{\lambda},\theta), ~(r,\theta) \in \{\frac{1}{a},a\} \times \mathbb{R}$. By the previous part the function $\hat{\mathcal{U}}(r,\theta) := \int\limits_{\frac{1}{r}}^1 \frac{\hat{u}(r\rho,\theta)}{\rho}d\rho
		+ \int\limits_0^{\theta}\left( \mathcal{C} - \int\limits_0^t \hat{u}_{\boldsymbol r}(1,\tau)d\tau \right)dt$ is the solution of the Neumann problem \eqref{Neumann problem biss} with boundary data $\hat{\phi}(r,\theta)=\frac{\hat{\varphi}(r,\theta)}{r}$ on $\{\frac{1}{a},a\} \times \mathbb{R}$, satisfying $\hat{\mathcal{U}}(1,0)=0$. Consequently defining $\mathcal{U}(R,\theta) = \hat{\mathcal{U}}(\lambda R,\theta), ~(R,\theta) \in (r_1,r_2) \times \mathbb{R}$, it follows that $\frac{\partial}{\partial R}\mathcal{U}(R,\theta) = \lambda~\hat{\mathcal{U}}_{\boldsymbol r}(\lambda R,\theta)$ from where $\frac{\partial \mathcal{U}}{\partial R}(r_2,\theta)=\lambda~\hat{\mathcal{U}}_{\boldsymbol r}(a,\theta) = \lambda~\hat{\phi}(a,\theta)=\phi(r_2,\theta)$ and also $\frac{\partial \mathcal{U}}{\partial R}(r_1,\theta)=\lambda~\hat{\mathcal{U}}_{\boldsymbol r}(\frac{1}{a},\theta) = \lambda~\hat{\phi}(\frac{1}{a},\theta)=\phi(r_1,\theta)$. In addition notice that equation \eqref{laplace eq in polar coordinates} is fulfilled for $\mathcal{U}$ on $(r_1,r_2) \times \mathbb{R}$, and  since $\mathcal{U}(\sqrt{r_1r_2},0)=\hat{\mathcal{U}}(1,0)=0$ one can conclude that $U=\mathcal{U}$. \\
		
		The proof of the second part is immediate and follows directly from equation \eqref{U definition Th1} by taking the derivative with respect to $r$.
		
	\end{proof}
	
	If an additional assumption on the smoothness of $\phi$ is added, the result in Theorem \ref{main theorem} can be strengthened. The main idea is that on $\boldsymbol A_{r_1,r_2}$ the solution $w$ of the Dirichlet problem \eqref{Dirichlet problem} with boundary data $g \in C^{m,\alpha}(\partial \boldsymbol A_{r_1,r_2})$, where $m \ge 2$ is integer and $\alpha \in (0,1]$, has the remarkable property that its $m$ order partial derivatives are locally $\alpha$ H{\"o}lder continuous in a sufficiently small neighborhood of each point $a \in \partial \boldsymbol A_{r_1,r_2}$. This result is often referred to as \textit{Kellogg's} theorem (for further details see \cite{Kellogg}). But we can link $w$ with $U$ given by \eqref{U definition Th1} and thus obtain important results on the continuous extensions of the higher order partial derivatives of $U$ to the closure of the domain where it is defined. Before stating and proving explicitly these results, we need to introduce a lemma which will be of crucial importance in the subsequent proofs.
	
	\begin{lemma}\label{Holder continuous extension for w}
		Let again $0<r_{1}<r_{2}<\infty$ and assume $\varphi :\left\{r_{1},r_{2}\right\} \times \mathbb{R}\rightarrow \mathbb{R}$ is $2\pi $-periodic in the second variable, satisfies the condition $\int\limits_{0}^{2\pi
		}\varphi \left( r_{1},\theta \right)~ d\theta = \int\limits_{0}^{2\pi }\varphi \left( r_{2},\theta \right)~ d\theta $, and in addition suppose there exists $\alpha \in (0,1]$ such that $\varphi(r,\cdot)$ belongs to $C^{m,\alpha}(\mathbb{R})$ for some positive integer $m \ge 2$, whenever $r \in \{r_1,r_2\}$. If the function $g: \partial \boldsymbol A_{r_1,r_2} \rightarrow \mathbb{R}$ satisfies $g(re^{i\theta}) = \varphi(r,\theta) ~\forall (r,\theta) \in  \{r_1,r_2\} \times \mathbb{R}$ and $w$ is the solution of the Dirichlet problem \eqref{Dirichlet problem} with boundary data $g$ then $w$,  together with all its partial derivatives up to order $m$, are uniformly H{\"o}lder continuous with exponent $\alpha$ on $\overline{\boldsymbol A_{r_1,r_2}}$. 	
	\end{lemma}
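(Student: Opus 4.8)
The plan is to reduce the claim about the doubly-connected annulus to a purely \emph{local} statement to which Kellogg's theorem applies, and then upgrade the resulting local Hölder estimates to a uniform one by compactness of $\overline{\boldsymbol A_{r_1,r_2}}$. First I would observe that, by the hypothesis $\varphi(r,\cdot)\in C^{m,\alpha}(\mathbb{R})$ for $r\in\{r_1,r_2\}$ together with $2\pi$-periodicity, the boundary datum $g$ lies in $C^{m,\alpha}(\partial\boldsymbol A_{r_1,r_2})$ in the sense of Definition \ref{definition smooth boundary value}: indeed the two boundary circles are real-analytic curves, so a local graph parametrization $\beta$ near a point $a\in C_{r_i}$ differs from the angular parametrization $\theta\mapsto r_ie^{i\theta}$ by a $C^\infty$ change of variable, and composing $f\circ\beta$ through this change of variable preserves membership in $C^{m,\alpha}$. (The compatibility condition $\int_0^{2\pi}\varphi(r_1,\theta)\,d\theta=\int_0^{2\pi}\varphi(r_2,\theta)\,d\theta$ is not needed for the regularity conclusion of this lemma; it is carried along only because the statement is phrased so as to dovetail with Theorem \ref{main theorem}.) Since $\boldsymbol A_{r_1,r_2}\in C^\infty\subset C^{m,\alpha}$, the standing hypotheses of Kellogg's theorem are met.

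Next I would invoke Kellogg's theorem (\cite{Kellogg}) in its local form: for each $a\in\partial\boldsymbol A_{r_1,r_2}$ there is a relatively open neighborhood $N_a$ of $a$ in $\overline{\boldsymbol A_{r_1,r_2}}$ such that $w$ and all its partial derivatives of order $\le m$ extend continuously to $\overline{N_a}$ and are $\alpha$-Hölder continuous there. On the other hand, in the interior $\boldsymbol A_{r_1,r_2}$ the function $w$ is harmonic, hence $C^\infty$, so trivially every partial derivative of order $\le m$ is locally $\alpha$-Hölder on any compact subset of the open annulus. Thus $w\in C^{m,\alpha}$ in a neighborhood (within $\overline{\boldsymbol A_{r_1,r_2}}$) of every point of the \emph{closed} annulus.

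The final step is the passage from these local Hölder bounds to a single uniform one on all of $\overline{\boldsymbol A_{r_1,r_2}}$, and this is the only point requiring a genuine, if standard, argument. Cover the compact set $\overline{\boldsymbol A_{r_1,r_2}}$ by finitely many of the neighborhoods $N_{a_1},\dots,N_{a_k}$ (together with finitely many interior balls), let $\delta>0$ be a Lebesgue number for this cover, and let $H$ be the maximum of the (finitely many) local Hölder constants for the fixed multi-index order $\le m$. For two points $z,z'\in\overline{\boldsymbol A_{r_1,r_2}}$ with $|z-z'|<\delta$ both lie in a common element of the cover, so the relevant derivative $D^\gamma w$ satisfies $|D^\gamma w(z)-D^\gamma w(z')|\le H|z-z'|^\alpha$; for $|z-z'|\ge\delta$ one uses $|D^\gamma w(z)-D^\gamma w(z')|\le 2\|D^\gamma w\|\le 2\|D^\gamma w\|\,\delta^{-\alpha}|z-z'|^\alpha$, where $\|D^\gamma w\|<\infty$ because $D^\gamma w$ is continuous on the compact set $\overline{\boldsymbol A_{r_1,r_2}}$. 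Taking the larger of the two constants over the finitely many multi-indices $\gamma$ with $|\gamma|\le m$ yields a single modulus $C$ with $|D^\gamma w(z)-D^\gamma w(z')|\le C|z-z'|^\alpha$ for all such $\gamma$ and all $z,z'\in\overline{\boldsymbol A_{r_1,r_2}}$, which is exactly the asserted uniform $\alpha$-Hölder continuity of $w$ and its partial derivatives up to order $m$. I expect the only real subtlety to be the bookkeeping in Step 1 — verifying that the angular smoothness of $\varphi$ translates, via the definition of $C^{m,\alpha}(\partial\boldsymbol A_{r_1,r_2})$ through graph parametrizations, into the hypothesis Kellogg's theorem actually requires — while Steps 2 and 3 are, respectively, a citation and a routine compactness argument.
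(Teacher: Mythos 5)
Your proposal is correct and follows essentially the same route as the paper: Kellogg's theorem for local $\alpha$ H{\"o}lder regularity near each boundary point, interior smoothness of harmonic functions, and compactness of $\overline{\boldsymbol A_{r_1,r_2}}$ to pass from local to uniform bounds. The only cosmetic difference is that you perform the local-to-uniform upgrade via a Lebesgue-number covering argument with a two-case split on $|z-z'|$, whereas the paper argues by contradiction with sequences (and handles the derivatives of order below $m$ via an integral representation); both are standard and equivalent.
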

	\begin{proof}
		For simplicity the lemma will only be proved for the case $m=2$, since the case of a positive integer $m \ge 3$ follows in the same way, using backward induction. To begin with, choose any $z_1$, $z_2 \in \boldsymbol A_{r_1,r_2}$. By \textit{Kellogg's} theorem the second order partial derivatives of $w$ are locally $\alpha$ Holder continuous in some neighborhood of any point $a \in \partial \boldsymbol A_{r_1,r_2}$. Since $r_2 < \infty$ the closure of $\boldsymbol A_{r_1,r_2}$ is a compact subset of the complex-plane and consequently the second-order partial derivatives of $w$ turn out to be uniformly H{\"o}lder continuous with exponent $\alpha$. Indeed the second-order partial derivatives of $w$ are locally Lipschitz continuous in some neighborhood of any point contained in $\boldsymbol A_{r_1,r_2}$, and if we choose the neighborhood small enough it follows that they are also locally $\alpha$ H{\"o}lder continuous ($|z_2 - z_1| \le |z_2 - z_1|^{\alpha}$ when $z_1$ and $z_2$ are sufficiently close). Next assume $w_{\boldsymbol x \boldsymbol x}$ is not uniformly H{\"o}lder continuous with exponent $\alpha$. If so, there exist two sequences $z_n$ and $\xi_n$ in $\boldsymbol A_{r_1,r_2}$ such that $\frac{|w_{\boldsymbol x \boldsymbol x}(z_n) - w_{\boldsymbol x \boldsymbol x}(\xi_n)|}{|z_n - \xi_n|^{\alpha}} \rightarrow \infty$. Since $w_{\boldsymbol x \boldsymbol x}$ is in particular continuous on $\overline{\boldsymbol A_{r_1,r_2}}$ it is also bounded there and hence there are subsequences $z_{n_k}$ and $\xi_{n_k}$ converging to some $z$ in the closure of $\boldsymbol A_{r_1,r_2}$. But this contradicts the fact that $w_{\boldsymbol x \boldsymbol x}$ is locally $\alpha$ H{\"o}lder continuous at $z$. The exact same reasoning can also be applied to $w_{\boldsymbol y \boldsymbol y}$ and $w_{\boldsymbol x \boldsymbol y}$, respectively. It is also easy to prove that $w$, $w_{\boldsymbol x}$ and $w_{\boldsymbol y}$ are uniformly $\alpha$ H{\"o}lder continuous. This can be seen using an integral representation in terms of the higher-order partial derivatives in a sufficiently small convex neighborhood of each point $z \in \overline{\boldsymbol A_{r_1,r_2}}$, together with the compactness of the closed annulus. \\
	\end{proof}
	
	\begin{theorem}\label{Holder continuity polar theorem}
		Let $0<r_{1}<r_{2}<\infty$ and assume $\phi :\left\{r_{1},r_{2}\right\} \times \mathbb{R}\rightarrow \mathbb{R}$ is $2\pi $-periodic in the second variable, satisfies $\int\limits_{0}^{2\pi
		}r_1\phi \left( r_{1},\theta \right)~ d\theta = \int\limits_{0}^{2\pi }r_2\phi \left( r_{2},\theta \right)~ d\theta $, and in addition suppose there exists $\alpha \in (0,1]$ such that $\phi(r,\cdot)$ belongs to $C^{m,\alpha}(\mathbb{R})$ for some positive integer $m \ge 2$, whenever $r \in \{r_1,r_2\}$. If $U$ is any solution of the Neumamm problem \eqref{Neumann problem biss} on $[r_1,r_2] \times \mathbb{R}$ with boundary data $\phi$, then $U$ is uniformly H{\"o}lder continuous with exponent $\alpha$, and likewise are all its partial derivatives up to order $m+1$.
	\end{theorem}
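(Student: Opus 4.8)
The plan is to reduce the H\"older regularity of $U$ entirely to that of the associated Dirichlet solution, using Theorem \ref{main theorem} together with two elementary pointwise identities. First I would set $\varphi(r,\theta)=r\phi(r,\theta)$ on $\{r_1,r_2\}\times\mathbb{R}$; since $r$ is a positive constant on this set, $\varphi$ inherits from $\phi$ all the relevant properties (continuity, $2\pi$-periodicity in the second variable, membership in $C^{m,\alpha}(\mathbb{R})$ in the second variable, and the compatibility condition $\int_0^{2\pi}\varphi(r_1,\theta)\,d\theta=\int_0^{2\pi}\varphi(r_2,\theta)\,d\theta$). By the converse part of Theorem \ref{main theorem}, applied with this $\varphi$ so that $\phi=\varphi/r$, the function $u(r,\theta):=rU_{\boldsymbol r}(r,\theta)$ is the solution of the Dirichlet problem \eqref{Dirichlet problem biss} with data $\varphi$; equivalently, $U_{\boldsymbol r}(r,\theta)=u(r,\theta)/r$ on $[r_1,r_2]\times\mathbb{R}$. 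Differentiating the polar Laplace relation \eqref{laplace eq in polar coordinates} for $U$ and substituting $U_{\boldsymbol r}=u/r$ and $U_{\boldsymbol r\boldsymbol r}=u_{\boldsymbol r}/r-u/r^2$ gives the second identity $U_{\boldsymbol\theta\boldsymbol\theta}(r,\theta)=-r\,u_{\boldsymbol r}(r,\theta)$, first on $(r_1,r_2)\times\mathbb{R}$ and then on all of $[r_1,r_2]\times\mathbb{R}$ since the right-hand side extends continuously. Note that neither the explicit formula \eqref{U definition Th1} nor the constant $\mathcal{C}$ is needed for this.

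Next I would upgrade the regularity of $u$ up to the closure. Applying Lemma \ref{Holder continuous extension for w} to $\varphi$ shows that the Cartesian Dirichlet solution $w$ with data $g(re^{i\theta})=\varphi(r,\theta)$, together with all its partial derivatives up to order $m$, is uniformly $\alpha$-H\"older on $\overline{\boldsymbol A_{r_1,r_2}}$; by Proposition \ref{equiv formulation in cartesian and polar coordinates}, $u(r,\theta)=w(re^{i\theta})$. Transferring this regularity through the polar change of coordinates $(r,\theta)\mapsto re^{i\theta}$ — which, because $0<r_1\le|z|\le r_2<\infty$, is $C^\infty$ with bounded partial derivatives on the strip and, modulo the $2\pi$-periodicity, bi-Lipschitz with uniform constants — yields that $u$ and all its partial derivatives up to order $m$ exist, are continuous and $2\pi$-periodic in $\theta$, and are uniformly $\alpha$-H\"older on $[r_1,r_2]\times\mathbb{R}$; in particular all of them are bounded there, by continuity, periodicity, and compactness of $[r_1,r_2]$.

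With the two identities and the regularity of $u$ in hand, the rest is bookkeeping. For a derivative of $U$ of total order at most $m+1$ containing at least one $r$-derivative, $\partial_{\boldsymbol r}^{\,j}\partial_{\boldsymbol\theta}^{\,k}U=\partial_{\boldsymbol r}^{\,j-1}\partial_{\boldsymbol\theta}^{\,k}(u/r)$, which by the Leibniz rule is a finite sum of products of partial derivatives of $u$ of order $\le m$ with derivatives of $1/r$ (smooth and bounded, with bounded derivatives, on $[r_1,r_2]$); a product of bounded, uniformly $\alpha$-H\"older functions being again uniformly $\alpha$-H\"older, such derivatives of $U$ are uniformly $\alpha$-H\"older on $[r_1,r_2]\times\mathbb{R}$. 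For a pure $\theta$-derivative of order $2\le k\le m+1$, $\partial_{\boldsymbol\theta}^{\,k}U=-r\,\partial_{\boldsymbol\theta}^{\,k-2}u_{\boldsymbol r}$, i.e.\ $r$ times a partial derivative of $u$ of order $k-1\le m$, hence again uniformly $\alpha$-H\"older (and this also shows these derivatives extend to the closed strip). It remains to handle $U$ and $U_{\boldsymbol\theta}$: both are continuous and $2\pi$-periodic in $\theta$ with $r$ ranging over the compact $[r_1,r_2]$, hence bounded; by the previous cases their gradients ($U_{\boldsymbol r}=u/r$, $U_{\boldsymbol\theta}$, and $U_{\boldsymbol r\boldsymbol\theta}=u_{\boldsymbol\theta}/r$, $U_{\boldsymbol\theta\boldsymbol\theta}=-ru_{\boldsymbol r}$) are bounded on the strip, so $U$ and $U_{\boldsymbol\theta}$ are Lipschitz there; and a bounded Lipschitz function on $[r_1,r_2]\times\mathbb{R}$ is uniformly $\alpha$-H\"older (use the Lipschitz bound when two points are within distance $1$, the sup bound otherwise). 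Collecting all cases proves the theorem.

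The step I expect to demand the most care is the transfer of the Kellogg-type estimates from $\overline{\boldsymbol A_{r_1,r_2}}$ to the \emph{unbounded} strip $[r_1,r_2]\times\mathbb{R}$: one must verify that the H\"older bounds obtained are genuinely uniform and that the derivatives of $u$ (and hence of $U$) really extend to the closed strip, which is precisely where $r_1>0$, $r_2<\infty$, and $2\pi$-periodicity are used. The only other place needing a separate idea is the treatment of $U$ and $U_{\boldsymbol\theta}$, which are not reached by differentiating the two identities and instead require combining boundedness (from periodicity and compactness of the radial interval) with the Lipschitz bounds those identities supply.
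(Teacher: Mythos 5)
Your proposal is correct and follows the same overall strategy as the paper: reduce everything to the associated Dirichlet solution via the relation $U_{\boldsymbol r}=u/r$ from Theorem \ref{main theorem}, obtain uniform $\alpha$-H\"older bounds for $u$ and its derivatives up to order $m$ on $[r_1,r_2]\times\mathbb{R}$ by combining Lemma \ref{Holder continuous extension for w} (Kellogg) with the polar change of coordinates (the step you rightly flag as the delicate one --- the paper spends most of its length on exactly this transfer, via the explicit formulas \eqref{link cartesian-polar higher derivatives}, the geometric inequality \eqref{first geometric inequality}, and the periodicity reduction to $[r_1,r_2]\times[0,3\pi]$), and then read off the derivatives of $U$ from $U_{\boldsymbol r}=u/r$, $U_{\boldsymbol r\boldsymbol\theta}=u_{\boldsymbol\theta}/r$, and $U_{\boldsymbol\theta\boldsymbol\theta}=-ru_{\boldsymbol r}$ together with the Leibniz rule, exactly as the paper does for orders two and three.

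The one place where you genuinely diverge is the treatment of $U$ and $U_{\boldsymbol\theta}$, which are not reached by differentiating those identities: the paper estimates both directly from the explicit integral representation \eqref{U definition Th1}, including the auxiliary $2\pi$-periodic function $h(t)=\mathcal{C}-\int_0^t u_{\boldsymbol r}(\sqrt{r_1r_2},\tau)\,d\tau$, whereas you observe that $U$ and $U_{\boldsymbol\theta}$ are bounded (periodicity in $\theta$ plus compactness of $[r_1,r_2]$) and have bounded gradients on the convex strip, hence are Lipschitz, hence uniformly $\alpha$-H\"older. Your route is slightly cleaner in that it never invokes \eqref{U definition Th1} or the constant $\mathcal{C}$, at the cost of not producing the explicit proportionality constants the paper records; both arguments are valid, and the remaining bookkeeping (products of bounded uniformly $\alpha$-H\"older functions, the split between $|x-y|\le 1$ and $|x-y|>1$) matches what the paper does. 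I see no gap.
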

	
	\begin{proof}
		Define $g: \partial \boldsymbol A_{r_1,r_2} \rightarrow \mathbb{R}$, $g(re^{i\theta}) = r\phi(r,\theta)$, $\theta \in \mathbb{R}$, and let $w$ be the solution of the Dirichlet problem \ref{Dirichlet problem} with boundary data $g$. According to Lemma \ref{Holder continuous extension for w} the harmonic function $w$ together with all its partial derivatives up to order $m$ are uniformly H{\"o}lder continuous with exponent $\alpha$ on $\overline{\boldsymbol A_{r_1,r_2}}$. The theorem will be proved for the case $m=2$, as the case of a general positive integer greater than or equal to two follows exactly in the same manner, using induction. With the same notations as those used in Theorem \ref{main theorem}, notice that Proposition \ref{equiv formulation in cartesian and polar coordinates} together with the uniqueness of the solution to Dirichlet problem implies that $u$ is just the representation in polar coordinates of $w$; more precisely we have $u(r,\theta) = w(re^{i\theta}) ~ \forall r \in [r_1,r_2], ~ \forall \theta \in \mathbb{R}$. Thus for all pairs $(r,\theta)$ in $(r_1,r_2) \times \mathbb{R}$ we obtain the following relations
		\begin{equation}\label{link cartesian-polar higher derivatives}
		\begin{cases}
		u_{\boldsymbol r}(r,\theta) = w_{\boldsymbol x}(z)\cos\theta + w_{\boldsymbol y}(z)\sin\theta, \\
		u_{\boldsymbol \theta}(r,\theta) = -rw_{\boldsymbol x}(z)\sin\theta + rw_{\boldsymbol y}(z)\cos\theta, \\ 
		u_{\boldsymbol r \boldsymbol r}(r,\theta) = w_{\boldsymbol x \boldsymbol x}(z)\cos^2\theta + w_{\boldsymbol x \boldsymbol y}(z)\sin 2\theta +  w_{\boldsymbol y \boldsymbol y}(z)\sin^2\theta, \\
		u_{\boldsymbol r \boldsymbol \theta}(r,\theta) = w_{\boldsymbol y}(z)\cos\theta - w_{\boldsymbol x}(z)\sin\theta + rw_{\boldsymbol x \boldsymbol y}(z)\cos 2\theta + \frac{r}{2}(w_{\boldsymbol y \boldsymbol y}(z) \\
		~~~~~~~~~~~~~~- w_{\boldsymbol x \boldsymbol x}(z))\sin 2\theta, \\
		u_{\boldsymbol \theta \boldsymbol \theta}(r,\theta) = -r^2w_{\boldsymbol x \boldsymbol y}(z)\sin 2\theta + r^2\left( w_{\boldsymbol y \boldsymbol y}(z)\cos^2\theta + w_{\boldsymbol x \boldsymbol x}(z)\sin^2\theta \right) - rw_{\boldsymbol y}(z)\cdot \\
		~~~~~~~~~~~~~~\sin\theta - rw_{\boldsymbol x}(z)\cos\theta, 
		\end{cases}
		\end{equation} 
		where $z = x+iy = r\cos\theta + ir\sin\theta$. According to Lemma \ref{Holder continuous extension for w} $w$, $w_{\boldsymbol x}$, $w_{\boldsymbol y}$, $w_{\boldsymbol x \boldsymbol x}$, $w_{\boldsymbol y \boldsymbol x}$, and $w_{\boldsymbol y \boldsymbol y}$ can be continuously extended to $\overline{\boldsymbol A_{r_1,r_2}}$, and so $u$ and all its partial derivatives up to order two can be continuously extended to $[r_1,r_2] \times \mathbb{R}$, due to the above relations. Further choose any $\alpha_1$, $\alpha_2 \in [r_1,r_2]$ and any $\theta_1$, $\theta_2 \in \mathbb{R}$ and denote $z_1 = \alpha_1e^{i\theta_1}$, $z_2 = \alpha_2e^{i\theta_2}$. Then one has $|u(\alpha_2,\theta_2) - u(\alpha_1,\theta_1)| = |w(z_2) - w(z_1)|$. Using again Lemma \ref{Holder continuous extension for w} there is a positive constant dubbed $H_w$ such that the H{\"o}lder constants corresponding to $w$ and all its partial derivatives up to order two, respectively, are upper bounded by it. Consequently this implies in particular that $|w(z_2) - w(z_1)| \le H_w |z_2 - z_1|^{\alpha}$. On the other hand notice that the geometry of the annulus $\boldsymbol A_{r_1,r_2}$ reveals that $z_1$ and $z_2$ must satisfy $|z_2 - z_1| \le |\alpha_2 - \alpha_1| + \max\{\alpha_1,\alpha_2\} |\theta_2 - \theta_1| \le |\alpha_2 - \alpha_1| + r_2 |\theta_2 - \theta_1|$ (see Figure \ref{fig:geometric_annulus}). Hence
		\begin{equation}\label{first geometric inequality}
		|z_2 - z_1| \le (r_2 + 1)\sqrt{|\alpha_2 - \alpha_1|^2 + |\theta_2 - \theta_1|^2}.
		\end{equation}
		To sum up we have just proved that 
		\begin{equation}\label{u Holder Th2}
		|u(\alpha_2,\theta_2) - u(\alpha_1,\theta_1)| \le H_w (r_2 + 1)^{\alpha} \left(|\alpha_2 - \alpha_1|^2 + |\theta_2 - \theta_1|^2\right)^{\frac{\alpha}{2}}.
		\end{equation}
		As for the first order partial derivatives of $u$, using the first two relations in \eqref{link cartesian-polar higher derivatives}, we notice that $|u_{\boldsymbol r}(\alpha_2,\theta_2) - u_{\boldsymbol r}(\alpha_1,\theta_1)| \le |w_{\boldsymbol x}(z_2)\cos\theta_2 - w_{\boldsymbol x}(z_1)\cos\theta_1| + |w_{\boldsymbol y}(z_2)\sin\theta_2 - w_{\boldsymbol y}(z_1)\sin\theta_1| \le |w_{\boldsymbol x}(z_2) - w_{\boldsymbol x}(z_1)| + |\cos\theta_2 - \cos\theta_1||w_{\boldsymbol x}(z_1)| + |w_{\boldsymbol y}(z_2) - w_{\boldsymbol y}(z_1)| + |\sin\theta_2 - \sin\theta_1||w_{\boldsymbol y}(z_1)| \le 2H_w|z_2-z_1| + \left(\|w_{\boldsymbol x}\| + \|w_{\boldsymbol y}\|\right) |\theta_2 - \theta_1|$. If the euclidean distance between the pairs of points $(\alpha_1,\theta_1)$, $(\alpha_2,\theta_2)$ is less than one then $\sqrt{|\alpha_2 - \alpha_1|^2 + |\theta_2 - \theta_1|^2} \le \left(|\alpha_2 - \alpha_1|^2 + |\theta_2 - \theta_1|^2\right)^{\frac{\alpha}{2}}$, and using \eqref{first geometric inequality}
		\begin{equation}\label{u_r Th2}
		|u_{\boldsymbol r}(\alpha_2,\theta_2) - u_{\boldsymbol r}(\alpha_1,\theta_1)| \lesssim \left(|\alpha_2 - \alpha_1|^2 + |\theta_2 - \theta_1|^2\right)^{\frac{\alpha}{2}}, 
		\end{equation}
		where a proportionality constant is given by
		\begin{equation*}
		2H_w(r_2+1)^{\alpha} + \|w_{\boldsymbol x}\| + \|w_{\boldsymbol y}\|.
		\end{equation*}
		In a similar way $|u_{\boldsymbol\theta}(\alpha_2,\theta_2) - u_{\boldsymbol\theta}(\alpha_1,\theta_1)| \le |\alpha_2 w_{\boldsymbol x}(z_2) \sin\theta_2 - \alpha_1 w_{\boldsymbol x}(z_1) \sin\theta_1| + |\alpha_2 w_{\boldsymbol y}(z_2) \cos\theta_2 - \alpha_1 w_{\boldsymbol y}(z_1) \cos\theta_1| \le |\alpha_2 w_{\boldsymbol x}(z_2) - \alpha_1 w_{\boldsymbol x}(z_1)| +\alpha_1 |\sin\theta_2 - \sin\theta_1|\cdot $ \\
		$|w_{\boldsymbol x}(z_1)| + |\alpha_2 w_{\boldsymbol y}(z_2) - \alpha_1 w_{\boldsymbol y}(z_1)| + \alpha_1 |\cos\theta_2 - \cos\theta_1||w_{\boldsymbol y}(z_1)| \le 2r_2H_w(r_2+1)^{\alpha}\left(|\alpha_2 - \alpha_1|^2 + |\theta_2 - \theta_1|^2\right)^{\frac{\alpha}{2}} + (r_2+1)\left(\|w_{\boldsymbol x}\| + \|w_{\boldsymbol y}|\right) \sqrt{|\alpha_2 - \alpha_1|^2 + |\theta_2 - \theta_1|^2}$, and if $(\alpha_1,\theta_1)$ and $(\alpha_2,\theta_2)$ are close enough in the euclidean distance (that is if the euclidean distance is less than one) then
		\begin{equation}\label{u_theta Th2}
		|u_{\boldsymbol\theta}(\alpha_2,\theta_2) - u_{\boldsymbol\theta}(\alpha_1,\theta_1)| \lesssim \left(|\alpha_2 - \alpha_1|^2 + |\theta_2 - \theta_1|^2\right)^{\frac{\alpha}{2}},
		\end{equation}
		where a proportionality constant is
		\begin{equation*}
		2r_2H_w(r_2+1)^{\alpha} + (r_2+1)\left(\|w_{\boldsymbol x}\| + \|w_{\boldsymbol y}\|\right).
		\end{equation*}
		Proceeding further the last three equations in \eqref{link cartesian-polar higher derivatives} will be used in order to derive similar conclusions on the second-order partial derivatives of $u$. To this end notice first that $|u_{\boldsymbol r \boldsymbol r}(\alpha_2,\theta_2) - u_{\boldsymbol r \boldsymbol r}(\alpha_1,\theta_1)| \le |w_{\boldsymbol x \boldsymbol x}(z_2) - w_{\boldsymbol x \boldsymbol x}(z_1)| + 2|\cos\theta_2 - \cos\theta_1||w_{\boldsymbol x \boldsymbol x}(z_1)| + |w_{\boldsymbol y \boldsymbol x}(z_2) - w_{\boldsymbol y \boldsymbol x}(z_1)| + |\sin 2\theta_2 - \sin 2\theta_1||w_{\boldsymbol y \boldsymbol x}(z_1)| + |w_{\boldsymbol y \boldsymbol y}(z_2) - w_{\boldsymbol y \boldsymbol y}(z_1)| + 2|\sin\theta_2 - \sin\theta_1||w_{\boldsymbol y \boldsymbol y}(z_1)| \le 3H_w|z_2 - z_1|^{\alpha} + \left(\|w_{\boldsymbol x \boldsymbol x}\| + \|w_{\boldsymbol y \boldsymbol x}\| + \|w_{\boldsymbol y \boldsymbol y}\|\right)$ \\
		$\cdot 2|\theta_2 - \theta_1|$, and if the pairs $(\alpha_2,\theta_2)$, $(\alpha_1,\theta_1)$ are again assumed to be close in the euclidean distance then 
		\begin{equation}
		|u_{\boldsymbol r \boldsymbol r}(\alpha_2,\theta_2) - u_{\boldsymbol r \boldsymbol r}(\alpha_1,\theta_1)| \lesssim \left(|\alpha_2 - \alpha_1|^2 + |\theta_2 - \theta_1|^2\right)^{\frac{\alpha}{2}},
		\end{equation} 
		where a proportionality constant is given by
		\begin{equation*}
		3H_w(r_2+1) + 2\left(\|w_{\boldsymbol x \boldsymbol x}\| + \|w_{\boldsymbol y \boldsymbol x}\| + \|w_{\boldsymbol y \boldsymbol y}\|\right).
		\end{equation*}
		Also $|u_{\boldsymbol r \boldsymbol \theta}(\alpha_2,\theta_2) - u_{\boldsymbol r \boldsymbol \theta}(\alpha_1,\theta_1)| \le |w_{\boldsymbol y}(z_2)\cos\theta_2 - w_{\boldsymbol y}(z_1)\cos\theta_1| + |w_{\boldsymbol x}(z_2)\sin\theta_2 - w_{\boldsymbol x}(z_1)\sin\theta_1| + |\alpha_2w_{\boldsymbol y \boldsymbol x}(z_2)\cos 2\theta_2 - \alpha_1w_{\boldsymbol y \boldsymbol x}(z_1)\cos 2\theta_1| + \frac{1}{2}|\alpha_2(w_{\boldsymbol y \boldsymbol y}(z_2) - w_{\boldsymbol x \boldsymbol x}(z_2))\cdot$ \\
		$\sin 2\theta_2 - \alpha_1(w_{\boldsymbol y \boldsymbol y}(z_1) - w_{\boldsymbol x \boldsymbol x}(z_1))\sin 2\theta_1|$. Again if $(\alpha_1,\theta_1)$ and $(\alpha_2,\theta_2)$ are sufficiently close in the euclidean distance then
		\begin{equation}
		|u_{\boldsymbol r \boldsymbol \theta}(\alpha_2,\theta_2) - u_{\boldsymbol r \boldsymbol \theta}(\alpha_1,\theta_1)| \lesssim \left(|\alpha_2 - \alpha_1|^2 + |\theta_2 - \theta_1|^2\right)^{\frac{\alpha}{2}},
		\end{equation}
		with a proportionality constant equal to
		\begin{equation*}
		\|w_{\boldsymbol x}\| + \|w_{\boldsymbol y}\| + (r_2 + 0.5)(\|w_{\boldsymbol x \boldsymbol x}\| + \|w_{\boldsymbol y \boldsymbol y}\|) + (2r_2 + 1)\|w_{\boldsymbol x \boldsymbol y}\| + 2H_w(r_2+1)^{\alpha + 1}.
		\end{equation*}
		Finally we compute $|u_{\boldsymbol \theta \boldsymbol \theta}(\alpha_2,\theta_2) - u_{\boldsymbol \theta \boldsymbol \theta}(\alpha_1,\theta_1)| \le |\alpha_2^2 w_{\boldsymbol x \boldsymbol y}(z_2)\sin 2\theta_2 -\alpha_1^2w_{\boldsymbol x \boldsymbol y}(z_1)\cdot$ \\
		$\sin 2\theta_1| + |\alpha_2^2(w_{\boldsymbol y \boldsymbol y}(z_2)\cos^2\theta_2 + w_{\boldsymbol x \boldsymbol x}(z_2)\sin^2\theta_2) - \alpha_1^2(w_{\boldsymbol y \boldsymbol y}(z_1)\cos^2\theta_1 + w_{\boldsymbol x \boldsymbol x}(z_1)\cdot$ \\
		$\sin^2\theta_1)| + |\alpha_2w_{\boldsymbol y}(z_2)\sin \theta_2 - \alpha_1w_{\boldsymbol y}(z_1)\sin\theta_1| + |\alpha_2w_{\boldsymbol x}(z_2)\cos\theta_2 -\alpha_1w_{\boldsymbol x}(z_1)\cos\theta_1|$, and if we assume once more that the pairs $(\alpha_1,\theta_1)$ and $(\alpha_2,\theta_2)$ are close in the euclidean distance then
		\begin{equation}\label{u_tt Holder Th2}
		|u_{\boldsymbol \theta \boldsymbol \theta}(\alpha_2,\theta_2) - u_{\boldsymbol \theta \boldsymbol \theta}(\alpha_1,\theta_1)| \lesssim \left(|\alpha_2 - \alpha_1|^2 + |\theta_2 - \theta_1|^2\right)^{\frac{\alpha}{2}},
		\end{equation}
		where a proportionality constant is
		\begin{align*}
		(r_2 + 1)\{ \|w_{\boldsymbol x}\| + \|w_{\boldsymbol y}\| + 2r_2 (\|w_{\boldsymbol x \boldsymbol x}\| + \|w_{\boldsymbol x \boldsymbol y}\| + \|w_{\boldsymbol y \boldsymbol y}\|) \} + 3H_wr_2(r_2+1)^{\alpha+1}.
		\end{align*}
		
		To sum up equations \eqref{u Holder Th2}-\eqref{u_tt Holder Th2} show that $u$ together with all its partial derivatives up to the second order are locally $\alpha$ H{\"o}lder continuous on $[r_1,r_2] \times \mathbb{R}$ with uniformly bounded constants. Hence $u$ and its partial derivatives up to order two are uniformly H{\"o}lder continuous with exponent $\alpha$ on any compact subset of $[r_1,r_2] \times \mathbb{R}$ and so on $[r_1,r_2] \times [0,3\pi]$ in particular. To show that this latter fact suffices to conclude that $u$ together with its partial derivatives up to order two are uniformly H{\"o}lder continuous with exponent $\alpha$ on $[r_1,r_2] \times \mathbb{R}$ choose any $\alpha_1, \ \alpha_2 \in [r_1,r_2]$ and any $\theta_1, \ \theta_2 \in \mathbb{R}$ and let $k_1, \ k_2$ be two integers such that defining $\theta_1' = \theta_1 - 2k_1\pi, \ \theta_2' = \theta_2 - 2k_2\pi$, $\theta_1'$ and $\theta_2'$ satisfy $|\theta_2' - \theta_1'| \in [0,2\pi]$. There are two possible cases.
		\begin{enumerate}
			\item[i.] $|\theta_2' - \theta_1'| \le \pi$, in which case we claim that $|\theta_2 - \theta_1| \ge |\theta_2' - \theta_1'|$. Indeed if $k_2 > k_1$ then $|\theta_2 - \theta_1| = |\theta_2' - \theta_1' + 2(k_2-k_1)\pi| = 2(k_2-k_1)\pi - (\theta_2' - \theta_1') \ge 2(k_2-k_1)\pi - |\theta_2' - \theta_1'| \ge 2(k_2-k_1)\pi - \pi \ge \pi \ge |\theta_2' - \theta_1'|$. If $k_1 > k_2$ switch the indexes $1$ and $2$, and if $k_1=k_2$ the inequality is trivial.
			\item[ii.] $|\theta_2' - \theta_1'| > \pi$. Assume first that $\theta_2' > \theta_1'$, in which case we find that $3\pi > \theta_1' + 2\pi =: \theta_1'' \ge \theta_2' =: \theta_2''$. Also $|\theta_2'' - \theta_1''| = \theta_1'' - \theta_2' = 2\pi - |\theta_2' - \theta_1'| < \pi$. But setting $k_1' := k_1-1$ and $k_2' := k_2$ it follows that $\theta_2 = \theta_2'' + 2k_2'\pi$ and $\theta_1 = \theta_1'' + 2k_1'\pi$, respectively. Since $|\theta_2'' - \theta_1''| < \pi$ the previous point shows $|\theta_2 - \theta_1| \ge |\theta_2'' - \theta_1''|$. If $\theta_1' > \theta_2'$ then $3\pi > \theta_2' + 2\pi =: \theta_2'' \ge \theta_1' =: \theta_1''$ and also $|\theta_2'' - \theta_1'| = 2\pi - |\theta_2' - \theta_1'| < \pi$. Proceeding similarly one obtains $|\theta_2 - \theta_1| \ge |\theta_2'' - \theta_1''|$. We conclude thus that in the case when $|\theta_2' - \theta_1'| > \pi$ there also exist two integers, dubbed $k_1'$ and $k_2'$, such that denoting $\theta_2'' = \theta_2 - 2k_2'\pi$ and $\theta_1'' = \theta_1 - 2k_1'\pi$, respectively, it follows that $\theta_1'', \ \theta_2'' \in [\pi,3\pi]$ and in addition $|\theta_2 - \theta_1| \ge |\theta_2'' - \theta_1''|$.
		\end{enumerate}
		Hence for any pairs $(\alpha_1,\theta_1), ~ (\alpha_2,\theta_2) \in [r_1,r_2] \times \mathbb{R}$ one can always find pairs $(\alpha_1,\theta_1')$ and $(\alpha_2,\theta_2')$, respectively, such that $\theta_1', \ \theta_2' \in [0,3\pi]$ and in addition $\sqrt{|\alpha_2-\alpha_1|^2 + |\theta_2'-\theta_1'|^2} \le \sqrt{|\alpha_2 - \alpha_1|^2 + |\theta_2 - \theta_1|^2}$.
		Consequently this shows, using the $2\pi$-periodicity in the second argument of $u$ and of all its partial derivatives up to order two, that there is a positive constant, call it $H_u$, such that 
		\begin{align*}\label{u and its derivatives uniformly Holder continuous}
		|u(\alpha_2,\theta_2) - u(\alpha_1,\theta_1)| &= |u(\alpha_2,\theta_2') - u(\alpha_1,\theta_1')| \le H_u \left(|\alpha_2 - \alpha_1|^2 + |\theta_2' - \theta_1'|^2\right)^{\frac{\alpha}{2}} \\
		& \le H_u \left(|\alpha_2 - \alpha_1|^2 + |\theta_2 - \theta_1|^2\right)^{\frac{\alpha}{2}},
		\end{align*}
		and the same holds true for $u_{\boldsymbol r}$, $u_{\boldsymbol \theta}$, $u_{\boldsymbol r \boldsymbol r}$, $u_{\boldsymbol \theta \boldsymbol r}$ and $u_{\boldsymbol \theta \boldsymbol \theta}$, respectively, on $[r_1,r_2] \times \mathbb{R}$.
		
		Finally the uniform H{\"o}lder continuity of $u$ and of its partial derivatives will be used in order to draw conclusions abut the uniform H{\"o}lder continuity of $U$ and of its partial derivatives up to order three. In this respect relations \eqref{U definition Th1} and \eqref{u Thm1} will be a key element. More precisely choose any $(\alpha_1,\theta_1), \ (\alpha_2,\theta_2) \in [r_1,r_2] \times \mathbb{R}$ and notice that $|U(\alpha_2,\theta_2) - U(\alpha_1,\theta_1)| \le \left|\int\limits_{\sqrt{r_1r_2}}^{\alpha_2} \frac{u(\rho,\theta_2)}{\rho}d\rho - \int\limits_{\sqrt{r_1r_2}}^{\alpha_1} \frac{u(\rho,\theta_1)}{\rho}d\rho \right| + \sqrt{r_1r_2} \left|\int\limits_{\theta_1}^{\theta_2}\left( \mathcal{C} - \int\limits_0^t u_{\boldsymbol r}(\sqrt{r_1r_2},\tau)d\tau\right) dt\right|$. According to the proof of Theorem \ref{main theorem} the real-valued function $h(t) := \mathcal{C} - \int\limits_0^t u_{\boldsymbol r}(\sqrt{r_1r_2},\tau)d\tau$ is $2\pi$-periodic and so one can readily see that $\|h\| = \sup\limits_{t \in [0,2\pi]}|h(t)| < \infty$. Consequently $|U(\alpha_2,\theta_2) - U(\alpha_1,\theta_1)| \le $ \\
		$\left|\int\limits_{\sqrt{r_1r_2}}^{\alpha_2} \frac{u(\rho,\theta_2)}{\rho}d\rho - \int\limits_{\sqrt{r_1r_2}}^{\alpha_1} \frac{u(\rho,\theta_1)}{\rho}d\rho \right| + \sqrt{r_1r_2}\int\limits_{\theta_1 \wedge \theta_2}^{\theta_1 \vee \theta_2}\left|\mathcal{C} - \int\limits_0^t u_{\boldsymbol r}(\sqrt{r_1r_2},\tau)d\tau\right|dt$, where the latter term is less than $\int\limits_{\alpha_1 \wedge \alpha_2}^{\alpha_1 \vee \alpha_2} \frac{|u(\rho,\theta_2)|}{\rho}d\rho + \int\limits_{\alpha_1 \wedge \sqrt{r_1r_2}}^{\alpha_2 \vee \sqrt{r_1,r_2}} \frac{|u(\rho,\theta_2) - u(\rho,\theta_1)|}{\rho}d\rho + \sqrt{r_1r_2}\cdot $ \\
		$\|h\||\theta_2 - \theta_1| \le \frac{1}{r_1}|\alpha_2 - \alpha_1|\|u\| + \frac{r_2 - r_1}{r_1}|\theta_2 - \theta_1|^{\alpha} + \sqrt{r_1r_2}|\theta_2 - \theta_1|\|h\|$. Under the assumption that $(\alpha_1,\theta_1)$ and $(\alpha_2,
		\theta_2)$ are close enough in the euclidean distance we observe that
		\begin{equation}
		|U(\alpha_2,\theta_2) - U(\alpha_1,\theta_1)| \lesssim \left(|\alpha_2 - \alpha_1|^2 + |\theta_2 - \theta_1|^2\right)^{\frac{\alpha}{2}},
		\end{equation}
		where a proportionality constant is given by
		\begin{equation*}
		\frac{\|u\|}{r_1} + \frac{r_2-r_1}{r_1} + \sqrt{r_1r_2}\|h\|.
		\end{equation*}
		Using relation \eqref{u Thm1} $|U_{\boldsymbol r}(\alpha_2,\theta_2) - U_{\boldsymbol r}(\alpha_1,\theta_1)| \le \frac{1}{r_1^2}|\alpha_1 u(\alpha_2,\theta_2) - \alpha_2 u (\alpha_1,\theta_1)| \le \frac{r_2}{r_1^2}|u(\alpha_2,\theta_2) - u(\alpha_1,\theta_1)| + \frac{\|u\|}{r_1^2}|\alpha_2 - \alpha_1|$ and assuming $(\alpha_1,\theta_1)$, $(\alpha_2,\theta_2)$ are sufficiently close then
		\begin{equation}
		|U_{\boldsymbol r}(\alpha_2,\theta_2) - U_{\boldsymbol r}(\alpha_1,\theta_1)| \lesssim \left(|\alpha_2 - \alpha_1|^2 + |\theta_2 - \theta_1|^2\right)^{\frac{\alpha}{2}}, 
		\end{equation} 
		where a proportionality constant is
		\begin{equation*}
		\frac{r_2+\|u\|}{r_1^2}.
		\end{equation*}
		Taking the derivative with respect to the second argument in \eqref{U definition Th1} and using the $2\pi$-periodicity in the second argument for $U_{\boldsymbol \theta}$ gives $|U_{\boldsymbol \theta}(\alpha_2,\theta_2) - U_{\boldsymbol \theta}(\alpha_1,\theta_1)| \le \int\limits_{\alpha_1 \wedge \alpha_2}^{\alpha_1 \vee \alpha_2}\frac{|u_{\boldsymbol \theta}(\rho,\theta_2)|}{\rho}d\rho + \int\limits_{\sqrt{r_1r_2}}^{\alpha_1}\frac{|u_{\boldsymbol \theta}(\rho,\theta_2) - u_{\boldsymbol \theta}(\rho,\theta_1)|}{\rho}d\rho 
		+ \sqrt{r_1r_2}\int\limits_{\theta_1 \wedge \theta_2}^{\theta_1 \vee \theta_2}|u_{\boldsymbol r}(\sqrt{r_1r_2},t)|dt \le |\alpha_2 - \alpha_1|\frac{\|u_{\boldsymbol \theta}\|}{r_1} + \frac{r_2 - r_1}{r_1}H_u|\theta_2 - \theta_1|^{\alpha} + r_2|\theta_2 - \theta_1| \|u_{\boldsymbol r}\|$, and if $(\alpha_1,\theta_1), \ (\alpha_2,\theta_2)$ are sufficiently close then
		\begin{equation}
		|U_{\boldsymbol \theta}(\alpha_2,\theta_2) - U_{\boldsymbol \theta}(\alpha_1,\theta_1)| \lesssim \left(|\alpha_2 - \alpha_1|^2 + |\theta_2 - \theta_1|^2\right)^{\frac{\alpha}{2}}, 
		\end{equation} 
		where a proportionality constant is found to be
		\begin{equation*}
		\frac{\|u_{\boldsymbol \theta}\|}{r_1} + \frac{r_2 - r_1}{r_1}H_u + r_2\|u_{\boldsymbol r}\|.
		\end{equation*}
		For the partial derivatives of order two of $U$ notice that, whenever $(r,\theta) \in (r_1,r_2) \times \mathbb{R}$, they are given by $U_{\boldsymbol r \boldsymbol r} = \frac{u_{\boldsymbol r}(r,\theta)}{r} - \frac{u(r,\theta)}{r^2}$, $U_{\boldsymbol r \boldsymbol \theta}(r,\theta) = \frac{u_{\boldsymbol \theta}(r,\theta)}{r}$, and using relations \eqref{laplace eq in polar coordinates} and \eqref{u Thm1} $U_{\boldsymbol \theta \boldsymbol \theta}(r,\theta) = -r^2U_{\boldsymbol r \boldsymbol r}(r,\theta) - rU_{\boldsymbol r}(r,\theta) = -ru_{\boldsymbol r}(r,\theta)$. These relations show that $U_{\boldsymbol r \boldsymbol \theta}$, $U_{\boldsymbol r \boldsymbol r}$ and $U_{\boldsymbol \theta \boldsymbol \theta}$ can be continuously extended to $[r_1,r_2] \times \mathbb{R}$ and the same notation will be kept for their continuous extensions. Choose now any pairs $(\alpha_1,\theta_1), \ (\alpha_2,\theta_2) \in [r_1,r_2] \times \mathbb{R}$ and compute $|U_{\boldsymbol r \boldsymbol r}(\alpha_2,\theta_2) - U_{\boldsymbol r \boldsymbol r}(\alpha_1,\theta_1)| \le \left| \frac{u_{\boldsymbol r}(\alpha_2,\theta_2)}{\alpha_2} - \frac{u_{\boldsymbol r}(\alpha_1,\theta_1)}{\alpha_1} \right| + \left|\frac{u(\alpha_2,\theta_2)}{\alpha_2^2} - \frac{u(\alpha_1,\theta_1)}{\alpha_1^2} \right| \le \frac{r_2}{r_1^2}\left|u_{\boldsymbol r}(\alpha_2,\theta_2) - u_{\boldsymbol r}(\alpha_1,\theta_1) \right| + \frac{1}{r_1^2}|\alpha_2 - \alpha_1||u_{\boldsymbol r}(\alpha_1,\theta_1)| + \frac{r_2^2}{r_1^4}|u(\alpha_2,\theta_2) - u(\alpha_1,\theta_1)| + \frac{r_1 + r_2}{r_1^4}|u(\alpha_1,\theta_1)||\alpha_2 - \alpha_1|$. Again if $(\alpha_1,\theta_1)$, $(\alpha_2,\theta_2)$ are close in the euclidean distance then
		\begin{equation}
		|U_{\boldsymbol r \boldsymbol r}(\alpha_2,\theta_2) - U_{\boldsymbol r \boldsymbol r}(\alpha_1,\theta_1)| \lesssim \left(|\alpha_2 - \alpha_1|^2 + |\theta_2 - \theta_1|^2\right)^{\frac{\alpha}{2}},
		\end{equation}
		where a proportionality constant is given by
		\begin{equation*}
		\frac{r_2^2 + \|u_{\boldsymbol r}\| + (r_1+r_2)\|u\|}{r_1^4}.
		\end{equation*} 
		Also $|U_{\boldsymbol r \boldsymbol \theta}(\alpha_2,\theta_2) - U_{\boldsymbol r \boldsymbol \theta}(\alpha_1,\theta_1)| \le \frac{r_2}{r_1^2}|u_{\boldsymbol \theta}(\alpha_2,\theta_2) - u_{\boldsymbol \theta}(\alpha_1,\theta_1)| +  \frac{1}{r_1^2}|u_{\boldsymbol \theta}(\alpha_1,\theta_1)||\alpha_2 - \alpha_1|$, and under the same closeness assumption on the pairs $(\alpha_1,\theta_1), \ (\alpha_2,\theta_2)$ one has
		\begin{equation}
		|U_{\boldsymbol r \boldsymbol \theta}(\alpha_2,\theta_2) - U_{\boldsymbol r \boldsymbol \theta}(\alpha_1,\theta_1)| \lesssim \left(|\alpha_2 - \alpha_1|^2 + |\theta_2 - \theta_1|^2\right)^{\frac{\alpha}{2}},
		\end{equation} 
		where a proportionality constant can be easily found to be
		\begin{equation*}
		\frac{r_2 + \|u_{\boldsymbol \theta}\|}{r_1^2}.
		\end{equation*}
		Finally $|U_{\boldsymbol \theta \boldsymbol \theta}(\alpha_2,\theta_2) - U_{\boldsymbol \theta \boldsymbol \theta}(\alpha_1,\theta_1)| \le r_2|u_{\boldsymbol r}(\alpha_2,\theta_2) - u_{\boldsymbol r}(\alpha_1,\theta_1)| + |u_{\boldsymbol r}(\alpha_1,\theta_1)|\cdot$ \\
		$|\alpha_2 - \alpha_1|$, and if $(\alpha_1,\theta_1), \ (\alpha_2,\theta_2)$ are close enough in the euclidean distance, then 
		\begin{equation}
		|U_{\boldsymbol \theta \boldsymbol \theta}(\alpha_2,\theta_2) - U_{\boldsymbol \theta \boldsymbol \theta}(\alpha_1,\theta_1)| \lesssim \left(|\alpha_2 - \alpha_1|^2 + |\theta_2 - \theta_1|^2\right)^{\frac{\alpha}{2}},
		\end{equation}
		where a proportionality constant is
		\begin{equation*}
		r_2H_u + \|u_{\boldsymbol r}\|.
		\end{equation*}
		Proceeding further $U_{\boldsymbol r \boldsymbol r \boldsymbol r}(r,\theta) = \frac{\partial}{\partial r}\left(\frac{ru_{\boldsymbol r}(r,\theta) - u(r,\theta)}{r^2}\right) = \frac{r^2u_{\boldsymbol r \boldsymbol r}(r,\theta) -2ru_{\boldsymbol r}(r,\theta) + 2u(r,\theta)}{r^3}$, $U_{\boldsymbol r \boldsymbol \theta \boldsymbol \theta}(r,\theta) = U_{\boldsymbol \theta \boldsymbol r \boldsymbol \theta}(r,\theta) = U_{\boldsymbol \theta \boldsymbol \theta \boldsymbol r}(r,\theta) = \frac{\partial}{\partial r}U_{\boldsymbol \theta \boldsymbol \theta}(r,\theta) = -u_{\boldsymbol r}(r,\theta) - ru_{\boldsymbol r \boldsymbol r}(r,\theta)$, $U_{\boldsymbol \theta \boldsymbol r \boldsymbol r}(r,\theta) = U_{\boldsymbol r \boldsymbol \theta \boldsymbol r}(r,\theta) = U_{\boldsymbol r \boldsymbol r \boldsymbol \theta}(r,\theta) = \frac{\partial}{\partial \theta}\left(\frac{ru_{\boldsymbol r}(r,\theta) - u(r,\theta)}{r^2} \right) = \frac{ru_{\boldsymbol \theta \boldsymbol r}(r,\theta) - u_{\boldsymbol \theta}}{r^2}$, and $U_{\boldsymbol \theta \boldsymbol \theta \boldsymbol \theta}(r,\theta) = -ru_{\boldsymbol \theta \boldsymbol r}(r,\theta)$. But then a similar reasoning as above, using the triangle inequality, shows that all the third order partial derivatives of $U$ can be continuously extended to $[r_1,r_2]\times \mathbb{R}$ and satisfy 
		\begin{equation}
		|U_{\boldsymbol a \boldsymbol b \boldsymbol c}(\alpha_2,\theta_2) - U_{\boldsymbol a \boldsymbol b \boldsymbol c}(\alpha_1,\theta_1)| \lesssim \left(|\alpha_2 - \alpha_1|^2 + |\theta_2 - \theta_1|^2\right)^{\frac{\alpha}{2}}, ~\boldsymbol a, \boldsymbol b, \boldsymbol c \in \{\boldsymbol r, \boldsymbol \theta\},
		\end{equation} 
		with proportionality constants depending only on $r_1$, $r_2$, $H_u$, $\|u\|$, $\|u_{\boldsymbol r}\|$, $\|u_{\boldsymbol \theta}\|$, $\|u_{\boldsymbol r \boldsymbol r}\|$, $\|u_{\boldsymbol r \boldsymbol \theta}\|$, and $\|u_{\boldsymbol \theta \boldsymbol \theta}\|$, for any $(\alpha_1,\theta_1)$, $(\alpha_2,\theta_2) \in [r_1,r_2] \times \mathbb{R}$ which are close enough in the euclidean distance.
		
		In conclusion it has been shown so far that $U$ together with its partial derivatives up to order three are locally $\alpha$ H{\"o}lder continuous on $[r_1,r_2] \times \mathbb{R}$, and using a compactness argument we can argue that they are uniformly H{\"o}lder continuous with exponent $\alpha$ on $[r_1,r_2] \times [0,3\pi]$. By considering the same argument as the one used earlier in the proof for $u$ and its partial derivatives up to order two, we can see that $U$ and its partial derivatives up to order three are actually uniformly H{\"o}lder continuous with exponent $\alpha$ on $[r_1,r_2] \times \mathbb{R}$. This concludes the whole proof.
	\end{proof}
	
	\begin{figure}[h!]
		\centering	
		\includegraphics[width=\linewidth]{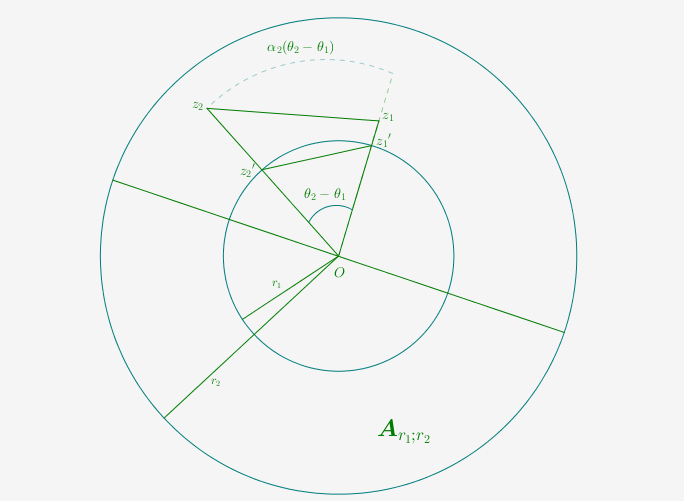}
		\caption{Some geometric properties of the annulus.}
		\label{fig:geometric_annulus}	
	\end{figure}
	
	Next some remarks will be provided. Notice first that for $r_{1}\searrow 0$ and $r_{2} = 1$, the region $%
	\boldsymbol A_{r_{1};r_{2}}$ becomes the punctured unit disk $\boldsymbol A_{0;1}=\left\{ z\in
	\mathbb{C}:0<\left\vert z\right\vert <1\right\} = \dot{\mathbb{U}} $. If $  w: \dot{\mathbb{U}} \rightarrow \mathbb{R}$ is a harmonic function having a finite limit at the origin (an isolated boundary point of the domain), then it is known that $w$ can be extended by continuity at the origin, and the resulting function is harmonic in $\mathbb{U}$. If $w$ has a continuous extension to $\overline{\mathbb{U}}$, with boundary values $w(0) = \varphi( 0,\cdot)$ (a constant function of $\theta \in \mathbb{R}$) and $w( e^{i\theta }) =\varphi( 1,\theta) $, $\theta \in \mathbb{R}$, then the condition $\int\limits_{0}^{2\pi }\varphi \left( 0,\theta \right)~ d\theta
	=\int\limits_{0}^{2\pi }\varphi \left( 1,\theta \right)~ d\theta $ in Theorem \ref{main theorem} is a necessary condition for the solvability of the
	Dirichlet problem in $\dot{\mathbb{U}}$ with continuous boundary data $\varphi $;
	\begin{equation*}
	w\left( 0\right) = \varphi \left( 0,\cdot \right) =\frac{1}{2\pi }%
	\int\limits_{0}^{2\pi }\varphi \left( 0,\theta \right)~ d\theta =\frac{1}{%
		2\pi }\int\limits_{0}^{2\pi }\varphi \left( 1,\theta \right)~ d\theta .
	\end{equation*}
	Subtracting a constant if necessary (i.e. considering $w-w\left( 0\right)$
	instead), without loss of generality it can be assumed that $w\left(
	0\right) =0$, or equivalently $0 = \frac{1}{2\pi }\int\limits_{0}^{2\pi }\varphi
	\left( 1,\theta \right)~ d\theta = \varphi \left( 0,\cdot \right)$. The above discussion shows that in the case of the punctured disk $\dot{\mathbb{U}} = \boldsymbol A_{0;1}$, the Dirichlet problem (\ref{Dirichlet problem biss}) has a unique solution for continuous boundary data
	$\varphi $ under the hypothesis $0 = \frac{1}{2\pi }\int\limits_{0}^{2\pi }\varphi
	\left( 1,\theta \right)~ d\theta = \varphi \left( 0,\cdot \right)$
	(implying $w\left( 0\right) =0$), which coincides with the solution of the Dirichlet problem in the whole unit disk $\mathbb{U}$, formulated in polar coordinates, with boundary data $\varphi
	\left( 1,\cdot \right)$, and thus under these
	hypotheses one can simply ignore the boundary condition at the origin
	(isolated boundary point of $\dot{\mathbb{U}}$).
	
	Similarly if $\phi(r,\cdot) \in C^0(\mathbb{R}), ~ r \in \{0,1\}$, satisfies $0 = \frac{1}{2\pi } \int\limits_{0}^{2\pi }\phi \left( 1,\theta \right)~ d\theta$, $\phi(0,\theta)=\frac{1}{\pi}\int\limits_0^{2\pi} cos~(t - \theta) \phi(1,t) ~dt, ~\theta \in \mathbb{R}$  and if $U_0$ is the solution of the Neumann problem \eqref{Neumann problem} on $\mathbb{U}$ which vanishes for $z=0$ and has boundary data $\Phi_0(z) := \phi \left(1,\arg(z)\right)$, then the Neumann problem (\ref{Neumann problem biss}) has a solution which actually coincides with the representation in polar coordinates of $U_0$. Indeed, by applying \cite[Theorem 1]{Beznea1} (or Corollary \ref{main corollary} below) it follows that $U_0(z)=\int\limits_0^1 \frac{u_0(\rho z)}{\rho}d\rho$, where $u_0$ is the solution of the Dirichlet problem in $\mathbb{U}$ with boundary data $\varphi_0 := \Phi_0$. But then defining $\hat{U}_0(r,\theta) = U_0(re^{i\theta}),~ (r,\theta) \in [0,1] \times \mathbb{R}$, it follows by Proposition \ref{equiv formulation in cartesian and polar coordinates} that $\hat{U}_0$ is $2\pi$-periodic in the second variable, has continuous second order partial derivatives, and satisfies equation \eqref{laplace eq in polar coordinates} in $(0,1) \times \mathbb{R}$, and in addition it has finite partial derivative with respect to the first variable at any point $(1,\theta_0), ~\theta_0 \in \mathbb{R}$. Moreover $\frac{\partial \hat{U}_0}{\partial r}(0,\theta) = \lim\limits_{r \searrow 0} \frac{\hat{U}_0(r,\theta) - \hat{U}_0(0,\theta)}{r} = \lim\limits_{r \searrow 0} \frac{\hat{U}_0(r,\theta)}{r} = \lim\limits_{r \searrow 0} \frac{U_0\left(re^{i\theta}\right)}{r} = \lim\limits_{r \searrow 0} \frac{1}{r}\int\limits_0^1\frac{u_0\left(\rho re^{i\theta}\right)d\rho}{\rho} = \frac{\partial u_0}{\partial x}(0)\cos\theta+ \frac{\partial u_0}{\partial y}(0)\sin\theta = \frac{\partial u_0}{\partial \boldsymbol a_{\theta}}(0) ~\forall \theta \in \mathbb{R}$. Also $\frac{\partial \hat{U}_0}{\partial r}(1,\theta)=\frac{\partial U_0}{\partial \boldsymbol \nu}\left(e^{i\theta}\right)=\Phi_0\left(e^{i\theta}\right)=\phi(1,\theta)$, whenever $\theta \in \mathbb{R}$. It is not difficult to show, using Poisson's formula as well as the \textit{Dominant Convergence} theorem (see also \cite[Theorem 2.27]{Real_Analysis}), that $\frac{\partial u_0}{\partial x}(0) = \frac{1}{\pi}\int\limits_0^{2\pi} \Phi_0(e^{it})\cos t ~dt = \frac{1}{\pi}\int\limits_0^{2\pi} \phi(1,t)\cos t ~dt$ and $\frac{\partial u_0}{\partial y}(0) = \frac{1}{\pi}\int\limits_0^{2\pi} \Phi_0(e^{it})\sin t ~dt = \frac{1}{\pi}\int\limits_0^{2\pi} \phi(1,t)\sin t ~dt$, which finally gives $\frac{\partial \hat{U}_0}{\partial r}(0,\theta) = \frac{1}{\pi}\int\limits_0^{2\pi} \cos (t - \theta) \phi(1,t) ~dt$. To sum up it can be concluded that
	\begin{equation*}
	\frac{\partial \hat{U}_0}{\partial r}(r,\theta) = \begin{cases}\frac{1}{\pi}\int\limits_0^{2\pi} \cos(t - \theta) \phi(1,t) ~dt, ~\text{ if } r=0, \\
	\phi(1,\theta), ~\text{ if } r=1. \end{cases}
	\end{equation*}
	The continuous extensions of $\frac{\partial \hat{U}_0}{\partial r}$ and $\frac{\partial \hat{U}_0}{\partial \theta}$ to $[0,1]\times\mathbb{R}$ are easily justified by Corollary \ref{main corollary} (below) and Proposition \ref{equiv formulation in cartesian and polar coordinates}. \\
	
	With this preamble the following two definitions will be introduced, with the convention that for both of them $\boldsymbol D = \boldsymbol A_{0;1}$.
	\begin{definition}\label{Dirichlet_punctured}
		If $\varphi : \mathbb{R} \rightarrow \mathbb{R}$ is continuous, $2\pi$-periodic, and satisfies $\int\limits_0^{2\pi}\varphi(\theta)~ d\theta = 0$, then the Dirichlet problem in polar coordinates for $\boldsymbol D$ consists in finding \\ $u=u(r,\theta) \in C^2((0,1) \times \mathbb{R}) \cap C^0([0,1] \times \mathbb{R})$ which is $2\pi$-periodic in the second variable and satisfies
		\begin{equation}
		\left\{
		\begin{array}{l}
		u_{rr}+\frac{1}{r}u_{r}+\frac{1}{r^{2}}u_{\theta \theta }=0 \qquad \text{in }%
		(0,1)\times \mathbb{R}, \\
		u(1,\cdot) = \varphi(\cdot), \\
		u(0,\cdot) = 0.
		\end{array}
		\right.
		\end{equation}
	\end{definition}
	
	\begin{definition}\label{Neumann_punctured}
		If $\phi: \{0;1\} \times \mathbb{R} \rightarrow \mathbb{R}$ is continuous, $2\pi$-periodic in the second argument, and satisfies $\int\limits_0^{2\pi} \phi(1,\theta)~ d\theta = 0$ as well as $\phi(0,\theta) = \frac{1}{\pi}\int\limits_0^{2\pi} \cos(t - \theta) \phi(1,t) ~dt$, then the Neumann problem in polar coordinates for $\boldsymbol D$ consists in finding \\
		$U\in C^2((0,1)\times \mathbb{R}) \cap C^1([0,1]\times\mathbb{R})$ which is $2\pi$-periodic in the second variable, and satisfies
		\begin{equation}\label{Neumann_punctured_solution}
		\begin{cases}
		U_{rr}+\frac{1}{r}U_{r}+\frac{1}{r^{2}}U_{\theta \theta }=0 &~\text{in } (0,1)\times \mathbb{R}, \\
		U_r = \phi &~\text{in } \{0;1\}\times \mathbb{R}, \\
		U(0,\cdot) = 0.
		\end{cases}
		\end{equation}
	\end{definition}
	
	\begin{remark} \label{Remark Neumann punctured}
		As we have already remarked, Definition \ref{Dirichlet_punctured} is nothing but the polar coordinates version of the Dirichlet problem \eqref{Dirichlet problem} for $\boldsymbol D=\mathbb{U}$ and boundary data $g(z)=\varphi(\arg(z))~ \text{on } \partial \mathbb{U}$. Definition \ref{Neumann_punctured}, instead, comes with a novelty which allows one to formulate the Neumann problem in a consistent way, for the punctured disk as well. This fact is in contrast with the classical Neumann problem where the (outward) normal derivative at $\{0\}$ can not be defined. In addition it reveals that if $\hat{U}$ is the solution of the Neumann problem \eqref{Neumann_punctured_solution} on $\boldsymbol A_{0;1}$, then $\hat{U}$ is just the representation in polar coordinates of the solution $U$ to the Neumann problem \eqref{Neumann problem} on $\mathbb{U}$, with boundary data $f(z) = \phi(1,\arg(z))$ and $U(0)=0$.
	\end{remark}
	
	In the particular case when the boundary data is symmetric, the result in
	Theorem \ref{main theorem} has the following simplified form.
	
	\begin{theorem}\label{simplified formula Neumann}
		Let $0<r_{1}<r_{2} < \infty$ and assume $\phi :\left\{ r_{1},r_{2}\right\} \times
		\mathbb{R}\rightarrow \mathbb{R}$ is continuous, $2\pi $-periodic in the second argument, verifies the Dirichlet conditions as a function of $\theta$, and satisfies $r_1\phi \left( r_{1},\theta \right) =r_2\phi
		\left( r_{2},\theta \right) $ for $\theta \in \mathbb{R}$. If $U$ is the solution of the Neumann problem (\ref{Neumann problem biss}) with boundary data $\phi$, satisfying $U(\sqrt{r_{1}r_{2}},0) =0$, then
		\begin{equation}\label{U definition Th3}
		U(r,\theta )=\int\limits_{\frac{\sqrt{r_{1}r_{2}}}{r}}^{1}\frac{u(r\rho,\theta )}{\rho }d\rho, ~\left(r,\theta\right) \in \left[r_{1},r_{2}\right] \times \mathbb{R},
		\end{equation}
		where $u$ is the solution of the Dirichlet problem (\ref{Dirichlet problem biss}%
		) with boundary values $\varphi(r,\theta) = r\phi(r,\theta)$ on $\left\{ r_{1},r_{2}\right\} \times \mathbb{R}$. Conversely, if $\varphi :\left\{ r_{1},r_{2}\right\} \times \mathbb{R}%
		\rightarrow \mathbb{R}$ is continuous, $2\pi $-periodic in the second
		variable, and satisfies $\int\limits_{0}^{2\pi }\varphi \left( r_{1},\theta
		\right) d\theta =\int\limits_{0}^{2\pi }\varphi \left( r_{2},\theta \right)
		d\theta $, and if $U$ is a solution
		of the Neumann problem (\ref{Neumann problem biss}) with boundary data $\phi(r,\theta)
		=\frac{\varphi(r,\theta)}{r}, ~ r \in \{r_{1},r_{2}\}$, then
		\begin{equation*}
		u\left( r,\theta \right) =rU_{\boldsymbol r}\left( r,\theta \right), \qquad \left(
		r,\theta \right) \in \left[ r_{1},r_{2}\right] \times \mathbb{R},
		\end{equation*}%
		is the solution of the Dirichlet problem (\ref{Dirichlet problem biss}).
	\end{theorem}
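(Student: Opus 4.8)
The plan is to obtain Theorem \ref{simplified formula Neumann} as a specialization of Theorem \ref{main theorem}: the extra symmetry hypothesis $r_1\phi(r_1,\theta)=r_2\phi(r_2,\theta)$ will be shown to force $u_{\boldsymbol r}(\sqrt{r_1r_2},\cdot)\equiv 0$, which makes the constant $\mathcal{C}$ and the entire second summand of \eqref{U definition Th1} vanish, so that \eqref{U definition Th1} collapses to \eqref{U definition Th3}. First I would record that $r_1\phi(r_1,\theta)=r_2\phi(r_2,\theta)$ says precisely that the Dirichlet datum $\varphi(r,\theta)=r\phi(r,\theta)$ is the same function of $\theta$ on the two circles $\{r=r_1\}$ and $\{r=r_2\}$; in particular it forces the compatibility condition $\int_0^{2\pi}r_1\phi(r_1,\theta)\,d\theta=\int_0^{2\pi}r_2\phi(r_2,\theta)\,d\theta$, so Theorem \ref{main theorem} applies and $U$ is given by \eqref{U definition Th1}.

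The crux is the vanishing of $u_{\boldsymbol r}$ on the middle circle, which I would deduce from the symmetry of the annulus under the inversion $r\mapsto r_1r_2/r$. This map is an involution interchanging $\{r=r_1\}$ and $\{r=r_2\}$ and fixing $\{r=\sqrt{r_1r_2}\}$. Set $v(r,\theta):=u(r_1r_2/r,\theta)$ on $[r_1,r_2]\times\mathbb{R}$; a short chain-rule computation — with $s=r_1r_2/r$ one has $s_r=-s/r$ and $s_{rr}+\frac1r s_r=\frac{s}{r^2}$, whence the radial part of the operator in \eqref{laplace eq in polar coordinates} transforms by the positive factor $s^2/r^2$ — shows that $v$ again satisfies \eqref{laplace eq in polar coordinates} on $(r_1,r_2)\times\mathbb{R}$. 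Moreover $v$ is $2\pi$-periodic and continuous on $[r_1,r_2]\times\mathbb{R}$, with boundary values $v(r_1,\cdot)=u(r_2,\cdot)=\varphi(r_2,\cdot)=\varphi(r_1,\cdot)=u(r_1,\cdot)$ and $v(r_2,\cdot)=u(r_1,\cdot)=\varphi(r_1,\cdot)=\varphi(r_2,\cdot)$, so $v$ solves the very same Dirichlet problem \eqref{Dirichlet problem biss} as $u$. By uniqueness for continuous boundary data (the Remark following Proposition \ref{equiv formulation in cartesian and polar coordinates}) we get $v\equiv u$, i.e. $u(r,\theta)=u(r_1r_2/r,\theta)$ for all $(r,\theta)$. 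Differentiating this identity in $r$ at the interior fixed point $r=\sqrt{r_1r_2}$ gives $u_{\boldsymbol r}(\sqrt{r_1r_2},\theta)=-u_{\boldsymbol r}(\sqrt{r_1r_2},\theta)$, hence $u_{\boldsymbol r}(\sqrt{r_1r_2},\cdot)\equiv 0$. It then follows that $\int_0^t u_{\boldsymbol r}(\sqrt{r_1r_2},\tau)\,d\tau=0$ for every $t$ and that $\mathcal{C}=0$ by \eqref{first appearance of C}, so \eqref{U definition Th1} reduces to \eqref{U definition Th3}, as claimed. The converse assertion is word for word the converse half of Theorem \ref{main theorem}, so nothing new is needed there.

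The only point demanding real care is the claim that the inversion $r\mapsto r_1r_2/r$ is a symmetry of \eqref{laplace eq in polar coordinates} — the chain-rule identity above — together with the observation that the uniqueness invoked applies to merely continuous data; both are routine and are covered by material already in the excerpt. As an alternative route, which is presumably why $\phi$ is additionally assumed to satisfy the Dirichlet conditions in the variable $\theta$, one could expand the common boundary datum in its Fourier series, write $u(r,\theta)=c_0+\sum_{n\neq 0}(a_n r^{|n|}+b_n r^{-|n|})e^{in\theta}$, observe that equating this on the two circles forces $b_0=0$ and $b_n=a_n(r_1r_2)^{|n|}$ for all $n\neq 0$, and verify term by term that $\partial_r u$ vanishes at $r=\sqrt{r_1r_2}$; but the symmetry argument has the advantage of avoiding all convergence considerations.
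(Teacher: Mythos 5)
Your proof is correct, and while the overall skeleton is the same as the paper's (reduce to Theorem \ref{main theorem} and show that the symmetry hypothesis forces $u_{\boldsymbol r}(\sqrt{r_1r_2},\cdot)\equiv 0$, hence $\mathcal{C}=0$ and the second summand of \eqref{U definition Th1} drops out), you establish the key identity $u(r,\theta)=u(r_1r_2/r,\theta)$ by a genuinely different argument. The paper first rescales to $r_1=1/a$, $r_2=a$, expands the common boundary datum in a Fourier series, writes the separated-variables solution $u(r,\theta)=A+B\log r+\sum_k\bigl[(C_kr^k+D_kr^{-k})\cos k\theta+(E_kr^k+G_kr^{-k})\sin k\theta\bigr]$, and matches coefficients on the two circles to get $B=0$, $C_k=D_k$, $E_k=G_k$, from which $u(r,\theta)=u(1/r,\theta)$ is read off directly; this is exactly why the hypothesis that $\phi$ verifies the Dirichlet conditions in $\theta$ appears in the statement. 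You instead note that the inversion $r\mapsto r_1r_2/r$ leaves equation \eqref{laplace eq in polar coordinates} invariant (your chain-rule computation, producing the overall factor $s^2/r^2>0$, is correct), that it swaps the two boundary circles on which the Dirichlet datum $\varphi$ coincides by hypothesis, and then invoke uniqueness of the Dirichlet problem for continuous data to conclude $u=u\circ(r\mapsto r_1r_2/r)$. Your route avoids all convergence considerations and, as you observe, renders the Dirichlet-conditions hypothesis superfluous — a small strengthening of the theorem. What the paper's Fourier computation buys in exchange is an explicit closed form for $u$ on the symmetric annulus, but that is not needed for the statement. Your handling of the converse (it is verbatim the converse of Theorem \ref{main theorem}) matches the paper.
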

	
	\begin{proof}
		For the first part it will be shown that under the additional hypothesis $r_1\phi(r_1,\theta)=r_2\phi(r_2,\theta)$, $\theta\in \mathbb{R}$, one has $u(r,\theta)=u(\frac{r_1r_2}{r},\theta) ~\forall ~(r,\theta) \in [r_1,r_2]\times \mathbb{R}$ from where it follows by derivation with respect to the first argument that $u_{\boldsymbol r}(r,\theta) = -\frac{r_1r_2}{r^2}u_{\boldsymbol r}\left(\frac{r_1r_2}{r},\theta\right)$ and taking $r=\sqrt{r_1r_2}$ it follows that $u_{\boldsymbol r}(\sqrt{r_1r_2},\theta) = -u_{\boldsymbol r}(\sqrt{r_1r_2},\theta)$ which in turn implies $u_{\boldsymbol r}(\sqrt{r_1r_2},\theta)=0 ~\forall~ \theta \in \mathbb{R}$, and so $U$ will have the desired expression. Notice that it is enough to prove the result for the special case $r_2=a, ~r_1=1/a$, since the general case follows from this one by means of scalarization, in the same way it was done in the proof of Theorem \ref{main theorem}. Hence it can be assumed without loss of generality that $r_2=a, ~r_1=1/a, ~a > 1$. Writing again the Fourier expansions for $\varphi(r_2,\cdot) = \varphi(r_1,\cdot)$ it is obtained $\varphi(r_2,\theta)=a_0 + \sum\limits_{k=1}^{\infty}(a_k\cos k\theta + b_k\sin k\theta)=\varphi(r_1,\theta) ~\forall ~\theta\in \mathbb{R}$. But then the solution of the Dirichlet problem \eqref{Dirichlet problem biss} on $\left(\frac{1}{a},a\right) \times \mathbb{R}$ with boundary data $\varphi$ is given by \\
		$u(r,\theta)=A+B\log r +\sum\limits_{k=1}^{\infty}\left[\left(C_kr^k + D_Kr^{-k}\right)\cos k\theta + \left(E_kr^k+G_kr^{-k}\right)\sin k\theta\right]$, with $\begin{cases}A-B\log a=a_0, \\ A+B\log a=a_0, \\ C_ka^{-k} + D_ka^k = a_k, \\ C_ka^k + D_ka^{-k} = a_k, \\ E_ka^{-k} + G_ka^k = b_k, \\ E_ka^k + G_ka^{-k} = b_k, \end{cases} ~\forall k \in \mathbb{N}^*$. Consequently it follows that \\
		$A=a_0, ~B=0, ~C_k = D_k = \frac{a_k}{a^k+a^{-k}}, ~E_k = G_k = \frac{b_k}{a^k+a^{-k}}, ~ k \in \mathbb{N}^*$, and hence \\
		$u(r,\theta) = a_0 + \sum\limits_{k=1}^{\infty}\left[\frac{r^k+r^{-k}}{a^k+a^{-k}}\left(a_k \cos k\theta +  b_k\sin k\theta\right)\right], ~ (r,\theta) \in \left[\frac{1}{a},a\right] \times \mathbb{R}$, which finally gives $u(r,\theta) = u(\frac{1}{r},\theta)$ for any pair $(r,\theta) \in \left[\frac{1}{a},a\right]  \times \mathbb{R}$. \\
		
		The second part is just the second part of Theorem \ref{main theorem}. This concludes the proof.
	\end{proof}
	
	Combining Proposition \ref{equiv formulation in cartesian and polar coordinates}, Theorem \ref{main theorem}, and Theorem \ref{Holder continuity polar theorem}, an important result in cartesian coordinates is obtained. Before presenting and proving it, the following lemma must be provided.
	\begin{lemma}\label{theta-z lemma}
		There exists some positive constant $L > 0$ such that if $z_1 = \alpha_1e^{i\theta_1}$ and $z_2 = \alpha_2e^{i\theta_2}$ are any two points in $\boldsymbol A_{r_1;r_2}$, $r_1 > 0$, and $|\theta_2 - \theta_1| \le \pi$, then
		\begin{equation} 
		|\theta_2 - \theta_1| \le L|z_2 - z_1|.
		\end{equation}
	\end{lemma}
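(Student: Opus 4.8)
The plan is to pass to polar coordinates and combine the law of cosines with an elementary trigonometric (Jordan-type) inequality. Setting $\delta := \theta_2 - \theta_1$, the law of cosines gives
\begin{equation*}
|z_2 - z_1|^2 = \alpha_1^2 + \alpha_2^2 - 2\alpha_1\alpha_2\cos\delta = (\alpha_2 - \alpha_1)^2 + 2\alpha_1\alpha_2(1 - \cos\delta).
\end{equation*}
I would then use the half-angle identity $1 - \cos\delta = 2\sin^2(\delta/2)$ and discard the nonnegative term $(\alpha_2 - \alpha_1)^2$ to obtain
\begin{equation*}
|z_2 - z_1|^2 = (\alpha_2 - \alpha_1)^2 + 4\alpha_1\alpha_2\sin^2\!\left(\tfrac{\delta}{2}\right) \ge 4\alpha_1\alpha_2\sin^2\!\left(\tfrac{\delta}{2}\right) \ge 4r_1^2\sin^2\!\left(\tfrac{\delta}{2}\right),
\end{equation*}
where the last inequality uses $\alpha_1,\alpha_2 > r_1$, which holds precisely because $z_1, z_2 \in \boldsymbol A_{r_1;r_2}$ with $r_1 > 0$.

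Next I would invoke Jordan's inequality in the form $|\sin t| \ge \tfrac{2}{\pi}|t|$, valid for all $t$ with $|t| \le \pi/2$. The hypothesis $|\theta_2 - \theta_1| \le \pi$ is exactly what guarantees that $t = \delta/2$ lies in $[-\pi/2, \pi/2]$, so $|\sin(\delta/2)| \ge |\delta|/\pi$. Substituting into the previous display yields $|z_2 - z_1| \ge \tfrac{2r_1}{\pi}|\delta| = \tfrac{2r_1}{\pi}|\theta_2 - \theta_1|$, so the lemma holds with $L = \tfrac{\pi}{2r_1}$.

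There is essentially no serious obstacle: the argument is a short computation. The only subtlety worth flagging is that the restriction $|\theta_2 - \theta_1| \le \pi$ cannot be dropped — it is what keeps $\delta/2$ inside the interval on which Jordan's inequality is sharp (otherwise $\sin(\delta/2)$ could be small while $|\delta|$ is large), and moreover the constant $L = \pi/(2r_1)$ necessarily degenerates as $r_1 \searrow 0$, reflecting the fact that near the origin points with very different arguments can be arbitrarily close. This is why the assumption $r_1 > 0$ is indispensable.
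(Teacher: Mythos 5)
Your proof is correct and follows essentially the same route as the paper: reduce to the chord length via the law of cosines and the half-angle identity, then bound $\sin(\delta/2)$ from below by a Jordan-type inequality, which is exactly where the hypotheses $|\theta_2-\theta_1|\le\pi$ and $r_1>0$ enter. Your version is in fact slightly cleaner, since you apply the law of cosines directly to $z_1,z_2$ and discard the term $(\alpha_2-\alpha_1)^2$ rather than first projecting onto the inner circle, and you obtain the explicit constant $L=\pi/(2r_1)$ instead of the paper's $\epsilon_0$-dependent one.
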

	\begin{proof}
		Denote $z_1' = r_1e^{i\theta_1}$ and $z_2' = r_2e^{i\theta_2}$ and notice that $|z_2' - z_1'| \le |z_2 - z_1|$ (see Figure \ref{fig:geometric_annulus}). Hence it suffices to show that $|\theta_2 - \theta_1| \le L|z_2' - z_1'|$. Using the \textit{Cosine Rule} we can readily notice that $|z_2' - z_1'| = |z_1'|^2 + |z_2'|^2 - 2|z_1'||z_2'|\cos\left(\theta_2 - \theta_1\right) = 2r_1^2\left(1 - \cos\left(\theta_2 - \theta_1\right)\right) = 4r_1^2\sin^2\left(\frac{\theta_2 - \theta_1}{2}\right)$. Since we have assumed $|\theta_2 - \theta_1| \le \pi$, it follows that
		\begin{equation}\label{eq1 Lemma1}
		\sin\left(\frac{|\theta_2 - \theta_1|}{2}\right) = \frac{|z_2' - z_1'|}{2r_1}.
		\end{equation}
		The equality $\lim\limits_{t \searrow 0} \frac{\sin t}{t} = 1$ implies the existence of some sufficiently small $\epsilon_0 > 0$ such that $\sin t \ge (1-\epsilon_0)t \ge \frac{t}{2}$ as soon as $t \in [0,\epsilon_0]$. On the other hand if $t \in \left[\epsilon_0,\frac{\pi}{2}\right]$ then $\sin t \ge \sin \epsilon_0$ and thus $\frac{2\sin \epsilon_0}{\pi}t \le \sin\epsilon_0 \le \sin t$. Consequently define $L_0 = \min \left(\frac{2\sin \epsilon_0}{\pi},\frac{1}{2}\right)$, which shows that $\sin\left(\frac{|\theta_2 - \theta_1|}{2}\right) \ge L_0\frac{|\theta_2 - \theta_1|}{2}$ from where, using relation \eqref{eq1 Lemma1}, one obtains $L_0 \frac{|\theta_2 - \theta_1|}{2} \le \sin\left(\frac{|\theta_2 - \theta_1|}{2}\right) = \frac{|z_2' - z_1'|}{2r_1}$. Finally, putting
		\begin{equation}\label{the L constant}
		L = \frac{1}{L_0 r_1} = \frac{1}{r_1}\frac{1}{\min \left(\frac{2\sin \epsilon_0}{\pi},\frac{1}{2}\right)}
		\end{equation} 
		the lemma is proved.
	\end{proof}
	
	\begin{theorem}\label{main result cartesian}
		Let $f: \partial \boldsymbol A_{r_1;r_2} \rightarrow \mathbb{R}$ be a continuous function satisfying $\int\limits_{\partial \boldsymbol A_{r_1;r_2}} f d\sigma = 0$. 
		If $U$ is the solution of the Neumann problem \eqref{Neumann problem} with boundary data $f$, satisfying $U(\sqrt{r_1r_2})=0$, then for any point $re^{i\theta} \in \overline{\boldsymbol A_{r_1;r_2}}$
		\begin{equation}\label{U cartesian}
		U(re^{i\theta})=\int\limits_{\frac{\sqrt{r_{1}r_{2}}}{r}}^{1}\frac{u(\rho re^{i\theta})}{\rho }d\rho +\sqrt{r_{1}r_{2}}\int\limits_{0}^{\theta }\left(
		\mathcal{C} - \int\limits_{0}^{t}\frac{\partial u}{\partial \boldsymbol a_{\tau} }(\sqrt{r_{1}r_{2}}e^{i\tau})d\tau \right)
		dt,
		\end{equation}
		where $u$ is the solution of the Dirichlet problem \eqref{Dirichlet problem} with boundary values $g(z) = \begin{cases}r_2f(z) &~\text{if } |z|=r_2, \\ -r_1f(z) &~\text{if } |z|=r_1,\end{cases}$ and where the constant $\mathcal{C}$ is given by
		\begin{equation}\label{C reprezentare}
		\frac{\sqrt{r_{1}r_{2}}}{2\pi }\int\limits_{0}^{2\pi
		}\int\limits_{0}^{t}\frac{\partial u}{\partial \boldsymbol a_{\tau}}(\sqrt{r_{1}r_{2}} e^{i\tau})~ d\tau dt.
		\end{equation}
		If in addition $f \in C^{m,\alpha}(\partial \boldsymbol A_{r_1;r_2})$ for some positive integer $m \ge 2$ an some $\alpha \in (0,1]$, then $U$ given in \eqref{U cartesian} together with all its partial derivatives up to order $m+1$ can be continuously extended to $\overline{\boldsymbol A_{r_1,r_2}}$ and their extensions are uniformly H{\"o}lder continuous with exponent $\alpha$. Conversely if $g: \partial \boldsymbol A_{r_1,r_2} \rightarrow \mathbb{R}$ is a continuous function satisfying $\int\limits_0^{2\pi}g\left(r_2e^{i\theta}\right)~ d\theta = \int\limits_0^{2\pi}g\left(r_1e^{i\theta}\right)~ d\theta$, and if $U$ is a solution of the Neumann problem \eqref{Neumann problem} with boundary data $f(z) = \begin{cases}\frac{g(z)}{r_2} &~\text{if } |z|=r_2, \\ -\frac{g(z)}{r_1} &~\text{if } |z|=r_1,\end{cases} $ then the function
		\begin{equation}
		u(re^{i\theta}) = r\frac{\partial U}{\partial \boldsymbol a_{\theta}}(re^{i\theta}), ~re^{i\theta} \in \overline{\boldsymbol A_{r_1;r_2}}
		\end{equation}
		is the solution of the Dirichlet problem \eqref{Dirichlet problem} with boundary data $g$.
	\end{theorem}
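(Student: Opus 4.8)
The plan is to reduce the statement entirely to its polar-coordinate counterparts via the dictionary of Proposition~\ref{equiv formulation in cartesian and polar coordinates}. I would begin by setting $\phi(r,\theta)=f(re^{i\theta})$ when $r=r_2$ and $\phi(r,\theta)=-f(re^{i\theta})$ when $r=r_1$, so that $\phi:\{r_1,r_2\}\times\mathbb{R}\to\mathbb{R}$ is continuous and $2\pi$-periodic in the second variable, and so that, since $d\sigma=r\,d\theta$ on $C_r$, the hypothesis $\int_{\partial\boldsymbol A_{r_1;r_2}}f\,d\sigma=0$ is precisely the compatibility condition $\int_0^{2\pi}r_1\phi(r_1,\theta)\,d\theta=\int_0^{2\pi}r_2\phi(r_2,\theta)\,d\theta$ required by Theorem~\ref{main theorem}. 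If $U$ is the solution of \eqref{Neumann problem} with data $f$ and $U(\sqrt{r_1r_2})=0$, then $\hat{U}(r,\theta):=U(re^{i\theta})$ is, by Proposition~\ref{equiv formulation in cartesian and polar coordinates} together with \eqref{normal-derivatives}, the solution of \eqref{Neumann problem biss} with data $\phi$ normalized by $\hat{U}(\sqrt{r_1r_2},0)=0$. Theorem~\ref{main theorem} then writes $\hat{U}$ in terms of the polar Dirichlet solution $\hat{u}$ with boundary data $\varphi(r,\theta)=r\phi(r,\theta)$; since $\varphi(r,\theta)=g(re^{i\theta})$ on $\{r_1,r_2\}\times\mathbb{R}$, Proposition~\ref{equiv formulation in cartesian and polar coordinates} and uniqueness for the Dirichlet problem give $\hat{u}(r,\theta)=u(re^{i\theta})$, with $u$ the Cartesian solution of \eqref{Dirichlet problem} with data $g$. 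Substituting $\hat{u}(r\rho,\theta)=u(\rho\,re^{i\theta})$ and $\hat{u}_{\boldsymbol r}(r,\theta)=\frac{\partial u}{\partial \boldsymbol a_{\theta}}(re^{i\theta})$ (the latter being one of the identities established inside Proposition~\ref{equiv formulation in cartesian and polar coordinates}) converts \eqref{U definition Th1} and \eqref{first appearance of C} into \eqref{U cartesian} and \eqref{C reprezentare}, which settles the first assertion.

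For the H{\"o}lder statement I would first verify that $f\in C^{m,\alpha}(\partial\boldsymbol A_{r_1;r_2})$ forces $\phi(r,\cdot)\in C^{m,\alpha}(\mathbb{R})$ for $r\in\{r_1,r_2\}$: near any point of $C_r$ the angular parametrization $\theta\mapsto re^{i\theta}$ is obtained from a local graph parametrization of $C_r$ as in Definition~\ref{definition smooth region} by composition with a $C^\infty$ change of variable having non-vanishing derivative, so composing with $f$ keeps the $C^{m,\alpha}$ class in view of Definition~\ref{definition smooth boundary value}. Then $\phi$ satisfies all the hypotheses of Theorem~\ref{Holder continuity polar theorem}, which gives that $\hat{U}$ and all its partial derivatives up to order $m+1$ extend continuously to $[r_1,r_2]\times\mathbb{R}$ and are uniformly $\alpha$-H{\"o}lder there. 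It remains to transport this to Cartesian coordinates. Using \eqref{lengthy computations} (valid away from the negative real axis, and everywhere after a rotation) and their higher-order analogues, each partial derivative of $U$ of order at most $m+1$ is a finite sum of partial derivatives of $\hat{U}$, evaluated at $(|z|,\arg z)$, multiplied by coefficients that are polynomials in $x/|z|=\cos(\arg z)$, $y/|z|=\sin(\arg z)$ and $1/|z|$; since $r_1\le|z|\le r_2$ these coefficients extend smoothly to a neighborhood of $\overline{\boldsymbol A_{r_1;r_2}}$, hence are bounded and Lipschitz there, while the $2\pi$-periodicity in $\theta$ of $\hat{U}$ and its derivatives makes all these expressions single-valued and continuous across the negative real axis. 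Finally, to turn the polar H{\"o}lder estimates into Cartesian ones I would invoke Lemma~\ref{theta-z lemma}: given $z_1,z_2\in\boldsymbol A_{r_1;r_2}$ I would pick representatives of their arguments with $|\theta_2-\theta_1|\le\pi$, so that $|\alpha_2-\alpha_1|=\bigl|\,|z_2|-|z_1|\,\bigr|\le|z_2-z_1|$ and $|\theta_2-\theta_1|\le L|z_2-z_1|$, whence $|\alpha_2-\alpha_1|^2+|\theta_2-\theta_1|^2\le(1+L^2)|z_2-z_1|^2$; combining this with the boundedness and Lipschitz-ness of the coefficients and with the estimates of Theorem~\ref{Holder continuity polar theorem}, exactly as in the computations carried out there, yields the uniform $\alpha$-H{\"o}lder continuity of $U$ and of its partial derivatives up to order $m+1$ on $\overline{\boldsymbol A_{r_1;r_2}}$.

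The converse is then immediate: with $f(z)=g(z)/r_2$ for $|z|=r_2$ and $f(z)=-g(z)/r_1$ for $|z|=r_1$, the assumption $\int_0^{2\pi}g(r_2e^{i\theta})\,d\theta=\int_0^{2\pi}g(r_1e^{i\theta})\,d\theta$ becomes $\int_{\partial\boldsymbol A_{r_1;r_2}}f\,d\sigma=0$, and $\varphi(r,\theta):=g(re^{i\theta})$ fulfils the hypotheses of the converse half of Theorem~\ref{main theorem} with $\phi(r,\theta)=\varphi(r,\theta)/r$. If $U$ solves \eqref{Neumann problem} with data $f$, then $\hat{U}(r,\theta)=U(re^{i\theta})$ solves \eqref{Neumann problem biss} with data $\phi$ (Proposition~\ref{equiv formulation in cartesian and polar coordinates} and \eqref{normal-derivatives}), so $\hat{u}(r,\theta):=r\hat{U}_{\boldsymbol r}(r,\theta)$ solves \eqref{Dirichlet problem biss} with data $\varphi$; by Proposition~\ref{equiv formulation in cartesian and polar coordinates} (the direction from the polar to the Cartesian formulation), $u(z):=\hat{u}(|z|,\arg z)$ is harmonic on $\boldsymbol A_{r_1;r_2}$ with continuous boundary values $\varphi(r,\theta)=g(re^{i\theta})$, i.e. it is the Dirichlet solution with data $g$, and since $\hat{U}_{\boldsymbol r}(r,\theta)=\frac{\partial U}{\partial \boldsymbol a_{\theta}}(re^{i\theta})$ we obtain $u(re^{i\theta})=r\frac{\partial U}{\partial \boldsymbol a_{\theta}}(re^{i\theta})$.

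The routine part is the first and third assertions, which are pure translation. The hard part will be the H{\"o}lder transfer in the second paragraph: organizing the chain-rule bookkeeping so that every Cartesian derivative of $U$ of order $\le m+1$ is displayed as a bounded-and-Lipschitz-coefficient combination of polar derivatives of $\hat{U}$ (the paper records this only for orders $\le 2$, in \eqref{lengthy computations}--\eqref{link cartesian-polar higher derivatives}), checking that these expressions survive the branch cut of $\arg$, and verifying that the $C^{m,\alpha}$ boundary class is insensitive to the change between graph and angular parametrizations; once this is in place the H{\"o}lder estimates themselves are of exactly the type already handled in the proof of Theorem~\ref{Holder continuity polar theorem}.
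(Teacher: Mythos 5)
Your proposal is correct and follows essentially the same route as the paper: reduce the formula and the converse to Proposition \ref{equiv formulation in cartesian and polar coordinates} and Theorem \ref{main theorem}, obtain the polar H{\"o}lder estimates from Theorem \ref{Holder continuity polar theorem}, express the Cartesian derivatives of $U$ through the chain rule in terms of those of $\hat{U}$ with bounded Lipschitz coefficients, and convert $|\theta_2-\theta_1|$ into $|z_2-z_1|$ via Lemma \ref{theta-z lemma}. The only cosmetic difference is that the paper first rescales so that $r_2\le 1/2$ (making $|z_2-z_1|\le|z_2-z_1|^{\alpha}$ hold outright) and writes out the second- and third-order chain-rule formulas explicitly, whereas you handle the Lipschitz-versus-H{\"o}lder absorption by the same local-estimate-plus-compactness device already used in Theorem \ref{Holder continuity polar theorem}; both are sound.
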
 
	\begin{proof}
		The only thing left to be proved is that if $f \in C^{m,\alpha}(\partial \boldsymbol A_{r_1;r_2})$ for some positive integer $m \ge 2$ an some $\alpha \in (0,1]$, then $U$ given in \eqref{U cartesian} together with all its partial derivatives up to order $m+1$ can be continuously extended to $\overline{\boldsymbol A_{r_1;r_2}}$ and their extensions are uniformly H{\"o}lder continuous with exponent $\alpha$. As explained earlier in the proof of Theorem \ref{Holder continuity polar theorem}, this proof will be done for the case $m=2$ as the case of a positive integer $m \ge 3$ follows exactly in the same way, using induction. To begin with assume first that $r_2 \le 1/2$ and let $\hat{U}$ be the solution of the Neumann problem \eqref{Neumann problem biss} on $[r_1,r_2] \times \mathbb{R}$ with boundary data $\phi(r,\theta) = \begin{cases}f(re^{i\theta}) \ &\text{if } r = r_2, \\ -f(re^{i\theta}) \ &\text{if } r = r_1, \end{cases}$ satisfying $\hat{U}(\sqrt{r_1r_2},0) = 0$. Then, since $f \in C^{m,\alpha}(\partial \boldsymbol A_{r_1;r_2})$, it follows by Theorem \ref{Holder continuity polar theorem} that $\hat{U}$ together with all its partial derivatives up to order $m+1$ can be continuously extended to $[r_1,r_2] \times \mathbb{R}$ and their extensions are uniformly H{\"o}lder continuous with exponent $\alpha$ there. We will transfer this property to $U$ in \eqref{U cartesian} and its partial derivatives up to order $m+1$ as follows. Choose any $z_1, ~ z_2 \in \overline{\boldsymbol A_{r_1;r_2}}$ and let $\alpha_1,\alpha_2, \in [r_1,r_2]$ and $\theta_1, \theta_2 \in \mathbb{R}$ be such that $z_1 = \alpha_1e^{i\theta_1}, z_2 = \alpha_2e^{i\theta_2}$, $|\theta_2 - \theta_1| \le \pi$. Notice then that $\hat{U}$ and $U$ given by \eqref{U cartesian} are related through the equation
		\begin{equation}\label{U U_hat Th4}
		\hat{U}(r,\theta) = U(re^{i\theta}), \ (r,\theta) \in (r_1,r_2) \times \mathbb{R}.
		\end{equation}
		Next $|U(z_2) - U(z_1)| = |\hat{U}(\alpha_2,\theta_2) - \hat{U}(\alpha_1,\theta_1)| \le H_U\left(|\alpha_2 - \alpha_1|^2 + |\theta_2 - \theta_1|^2\right)^{\frac{\alpha}{2}} \le H_U \left(|z_2 - z_1|^2 + L^2|z_2-z_1|^2\right)^{\frac{\alpha}{2}}$, for some positive constant $H_U$, where the first inequality is due to Theorem \ref{Holder continuity polar theorem} whereas the second one is due to Lemma \ref{theta-z lemma}. Hence 
		\begin{equation}
		|U(z_2) - U(z_1)| \le H_U(1+L^2)^{\frac{\alpha}{2}}|z_2 - z_1|^{\alpha},	
		\end{equation}
		which proves that $U$ is indeed uniformly H{\"o}lder continuous with exponent $\alpha$ on $\overline{\boldsymbol A_{r_1;r_2}}$. Proceeding further take the derivatives with respect to $r$ and $\theta$ in \eqref{U U_hat Th4} and letting $z = (r\cos\theta,r\sin\theta)$ we obtain
		\begin{equation}\label{polar 2 cartesian first derivatives Th4}
		\begin{cases}
		U_{\boldsymbol x}(z) = \hat{U}_{\boldsymbol r}(r,\theta)\cos\theta - \frac{1}{r}\hat{U}_{\boldsymbol \theta}(r,\theta)\sin\theta, \\
		U_{\boldsymbol y}(z) = \hat{U}_{\boldsymbol r}(r,\theta)\sin\theta + \frac{1}{r}\hat{U}_{\boldsymbol \theta}(r,\theta)\cos\theta.
		\end{cases}
		\end{equation}
		To prove the locally $\alpha$ H{\"o}lder continuity property on $\overline{\boldsymbol A_{r_1;r_2}}$ for the partial derivatives of $U$ up to order three, assume in addition that $z_1$ and $z_2$ are close enough. Consequently compute $|U_{\boldsymbol x}(z_2) - U_{\boldsymbol x}(z_1)| \le |\hat{U}_{\boldsymbol r}(\alpha_2,\theta_2)\cos\theta_2 - \hat{U}_{\boldsymbol r}(\alpha_1,\theta_1)\cos\theta_1|+|\frac{1}{\alpha_2}\hat{U}_{\boldsymbol \theta}(\alpha_2,\theta_2)\sin\theta_2 - \frac{1}{\alpha_1}\hat{U}_{\boldsymbol \theta}(\alpha_1,\theta_1)\sin\theta_1| \le |\hat{U}_{\boldsymbol r}(\alpha_2,\theta_2) - \hat{U}_{\boldsymbol r}(\alpha_1,\theta_1)| + |\hat{U}_{\boldsymbol r}(\alpha_1,\theta_1)|\cdot$ \\
		$|\cos\theta_2 - \cos\theta_1| + \frac{1}{\alpha_2}|\hat{U}_{\boldsymbol \theta}(\alpha_2,\theta_2) - \hat{U}_{\boldsymbol \theta}(\alpha_1,\theta_1)| + |\hat{U}_{\boldsymbol \theta}(\alpha_1,\theta_1)|\left|\frac{\sin\theta_2}{\alpha_2} - \frac{\sin\theta_1}{\alpha_1}\right|$. Focusing on the last inequality the first term is upper-bounded by $H_U(1+L^2)^{\frac{\alpha}{2}}|z_2 - z_1|^{\alpha}$, the second term is upper-bounded by $L\|\hat{U}_{\boldsymbol r}\| |z_2 - z_1|$, the third term is upper-bounded by $(1+L^2)^{\frac{\alpha}{2}}\frac{H_U}{r_1}|z_2 - z_1|^{\alpha}$, and lastly the fourth term is upper-bounded by $\frac{1+Lr_2}{r_1^2}\|\hat{U}_{\boldsymbol \theta}\||z_2 - z_1|$. Since $r_2$ was assumed to be less than $0.5$ it follows that any $z_1, ~ z_2 \in \overline{\boldsymbol A_{r_1,r_2}}$ satisfy $|z_2 - z_1| \le |z_2 - z_1|^{\alpha}$ and thus 
		\begin{equation}
		|U_{\boldsymbol x}(z_2) - U_{\boldsymbol x}(z_1)| \lesssim |z_2 - z_1|^{\alpha},
		\end{equation}
		where a proportionality constant is readily found to be
		\begin{equation*}
		H_U(1+L^2)^{\frac{\alpha}{2}}\left(1+\frac{1}{r_1}\right) + L\|\hat{U}_{\boldsymbol r}\| + \frac{1+Lr_2}{r_1^2}\|\hat{U}_{\boldsymbol \theta}\|.
		\end{equation*}
		In a similar way compute $|U_{\boldsymbol y}(z_2) - U_{\boldsymbol y}(z_1)| \le |\hat{U}_{\boldsymbol r}(\alpha_2,\theta_2)\sin\theta_2 - \hat{U}_{\boldsymbol r}(\alpha_1,\theta_1)\sin\theta_1|+|\frac{1}{\alpha_2}\hat{U}_{\boldsymbol \theta}(\alpha_2,\theta_2)\cos\theta_2 - \frac{1}{\alpha_1}\hat{U}_{\boldsymbol \theta}(\alpha_1,\theta_1)\cos\theta_1| \le |\hat{U}_{\boldsymbol r}(\alpha_2,\theta_2) - \hat{U}_{\boldsymbol r}(\alpha_1,\theta_1)| + |\hat{U}_{\boldsymbol r}(\alpha_1,\theta_1)|\cdot$ \\
		$|\sin\theta_2 - \sin\theta_1| + \frac{1}{\alpha_2}|\hat{U}_{\boldsymbol \theta}(\alpha_2,\theta_2) - \hat{U}_{\boldsymbol \theta}(\alpha_1,\theta_1)| + |\hat{U}_{\boldsymbol \theta}(\alpha_1,\theta_1)|\left|\frac{\cos\theta_2}{\alpha_2} - \frac{\cos\theta_1}{\alpha_1}\right|$, and notice that all four terms have the same upper bounds as above. Hence one can conclude that 
		\begin{equation}
		|U_{\boldsymbol y}(z_2) - U_{\boldsymbol y}(z_1)| \lesssim |z_2 - z_1|^{\alpha},
		\end{equation}
		where a proportionality constant is thus
		\begin{equation*}
		H_U(1+L^2)^{\frac{\alpha}{2}}\left(1+\frac{1}{r_1}\right) + L\|\hat{U}_{\boldsymbol r}\| + \frac{1+Lr_2}{r_1^2}\|\hat{U}_{\boldsymbol \theta}\|.
		\end{equation*}
		
		To show the locally $\alpha$ H{\"o}lder continuity property on $\overline{\boldsymbol A_{r_1;r_2}}$ for the second order partial derivatives of $U$ plug $z = (r\cos\theta,r\sin\theta)$ in \eqref{polar 2 cartesian first derivatives Th4} and take again derivatives with respect to both $r$ and $\theta$ to obtain after some elementary algebraic manipulations
		\begin{align}\label{second order partial derivatives U cartesian Th4}
		U_{\boldsymbol x \boldsymbol x}(z) &= \hat{U}_{\boldsymbol r \boldsymbol r}(r,\theta)\cos^2\theta - \hat{U}_{\boldsymbol r \boldsymbol \theta}(r,\theta)\frac{\sin 2\theta}{r} + \hat{U}_{\boldsymbol \theta \boldsymbol \theta}(r,\theta)\frac{\sin^2 \theta}{r^2} \nonumber \\
		& + \hat{U}_{\boldsymbol \theta}(r,\theta)\frac{\cos\theta(1 + \sin\theta)}{r^2} + \hat{U}_{\boldsymbol r}(r,\theta)\frac{\sin^2 \theta}{r}, \nonumber \\
		U_{\boldsymbol x \boldsymbol y}(z) &= \hat{U}_{\boldsymbol r \boldsymbol r}(r,\theta)\frac{\sin 2\theta}{2} + \hat{U}_{\boldsymbol r \boldsymbol \theta}(r,\theta)\frac{\cos 2\theta}{r} - \hat{U}_{\boldsymbol \theta \boldsymbol \theta}(r,\theta)\frac{\sin 2\theta}{2r^2} \nonumber \\
		& - \hat{U}_{\boldsymbol \theta}(r,\theta)\left(\frac{\sin\theta}{r} + \frac{\cos^2 \theta}{r^2}\right) - \hat{U}_{\boldsymbol r}(r,\theta) \frac{\sin 2\theta}{2r}, \\
		U_{\boldsymbol y \boldsymbol y}(z) &= \hat{U}_{\boldsymbol r \boldsymbol r}(r,\theta)\sin^2\theta + \hat{U}_{\boldsymbol r \boldsymbol \theta}(r,\theta)\frac{\sin 2\theta}{r} + \hat{U}_{\boldsymbol \theta \boldsymbol \theta}(r,\theta)\frac{\cos^2 \theta}{r^2} \nonumber \\
		& - \hat{U}_{\boldsymbol \theta}(r,\theta)\frac{\sin 2\theta}{2}\left(1+\frac{1}{r^2}\right) + \hat{U}_{\boldsymbol r}(r,\theta)\frac{\cos^2 \theta}{r}. \nonumber
		\end{align} 
		So $|U_{\boldsymbol x \boldsymbol x}(z_2) - U_{\boldsymbol x \boldsymbol x}(z_1)| \le |\hat{U}_{\boldsymbol r \boldsymbol r}(\alpha_2,\theta_2) - \hat{U}_{\boldsymbol r \boldsymbol r}(\alpha_1,\theta_1)| + |\cos^2\theta_2 - \cos^2\theta_1|
		|\hat{U}_{\boldsymbol r \boldsymbol r}(\alpha_1,\theta_1)|$ \\
		$+ \frac{1}{\alpha_2}|\hat{U}_{\boldsymbol r \boldsymbol \theta}(\alpha_2,\theta_2) - \hat{U}_{\boldsymbol r \boldsymbol \theta}(\alpha_1,\theta_1)| + \left|\frac{\sin 2\theta_2}{\alpha_2} - \frac{\sin 2\theta_1}{\alpha_1}\right||\hat{U}_{\boldsymbol r \boldsymbol \theta}(\alpha_1,\theta_1)| + \frac{1}{\alpha_2^2}
		|\hat{U}_{\boldsymbol \theta \boldsymbol \theta}(\alpha_2,\theta_2) - \hat{U}_{\boldsymbol \theta \boldsymbol \theta}(\alpha_1,\theta_1)| + \left|\frac{\sin^2 \theta_2}{\alpha_2} - \frac{\sin^2 \theta_1}{\alpha_1}\right|\left|\hat{U}_{\boldsymbol \theta \boldsymbol \theta}(\alpha_1,\theta_1)\right| + \frac{2}{\alpha_2^2}\left|\hat{U}_{\boldsymbol \theta}(\alpha_2,\theta_2) - \hat{U}_{\boldsymbol \theta}(\alpha_1,\theta_1)\right|+$ \\ $|\hat{U}_{\boldsymbol \theta}(\alpha_1,\theta_1)| \left|\frac{\cos\theta_2(1+\sin\theta_2)}{\alpha_2^2} - \frac{\cos\theta_1(1+\sin\theta_1)}{\alpha_1^2}\right| + |\frac{\hat{U}_{\boldsymbol r}(\alpha_2,\theta_2)}{\alpha_2} - \frac{\hat{U}_{\boldsymbol r}(\alpha_1,\theta_1)}{\alpha_2}| + |\hat{U}_{\boldsymbol r}(\alpha_1,\theta_1)|\cdot$ \\
		$\left|\frac{\sin^2\theta_2}{\alpha_2} -  \frac{\sin^2\theta_1}{\alpha_1}\right|$. Let $\mathcal{H}> 0$ be such that $\mathcal{H}\left(|\alpha_2-\alpha_1|^2 + |\theta_2 - \theta_1|^2\right)^{\frac{\alpha}{2}}$ upper-bounds $|\hat{U}(\alpha_2,\theta_2)-\hat{U}(\alpha_1,\theta_1)|$, $|\hat{U}_{\boldsymbol r}(\alpha_2,\theta_2)-\hat{U}_{\boldsymbol r}(\alpha_1,\theta_1)|$, $|\hat{U}_{\boldsymbol \theta}(\alpha_2,\theta_2)-\hat{U}_{\boldsymbol \theta}(\alpha_1,\theta_1)|$, $|\hat{U}_{\boldsymbol r \boldsymbol r}(\alpha_2,\theta_2)-\hat{U}_{\boldsymbol r \boldsymbol r}(\alpha_1,\theta_1)|$, $|\hat{U}_{\boldsymbol r \boldsymbol \theta}(\alpha_2,\theta_2)-\hat{U}_{\boldsymbol r \boldsymbol \theta}(\alpha_1,\theta_1)|$, $|\hat{U}_{\boldsymbol \theta \boldsymbol \theta}(\alpha_2,\theta_2)-\hat{U}_{\boldsymbol \theta \boldsymbol \theta}(\alpha_1,\theta_1)|$, as well as $|\hat{U}_{\boldsymbol a \boldsymbol b \boldsymbol c}(\alpha_2,\theta_2) - \hat{U}_{\boldsymbol a \boldsymbol b \boldsymbol c}(\alpha_1,\theta_1)|$, $\boldsymbol a \boldsymbol b \boldsymbol c \in \{\boldsymbol r,\boldsymbol \theta\}$, for any $(\alpha_1,\theta_1), ~(\alpha_2,\theta_2) \in [r_1,r_2] \times \mathbb{R}$ (see Theorem \ref{Holder continuity polar theorem}). Consequently using simple algebraic manipulations it is easy to check that
		\begin{equation}
		|U_{\boldsymbol x \boldsymbol x}(z_2) - U_{\boldsymbol x \boldsymbol x}(z_1)| \lesssim |z_2 - z_1|^{\alpha},
		\end{equation}
		where a proportionality constant is
		\begin{align*}
		2L\|\hat{U}_{\boldsymbol r \boldsymbol r}\| + \frac{2Lr_2+1}{r_1^2}\|\hat{U}_{\boldsymbol r \boldsymbol \theta}\| &+ \frac{2Lr_2+1}{r_1^2}\|\hat{U}_{\boldsymbol \theta \boldsymbol \theta}\| + \frac{3Lr_2^2 + 4r_2}{r_1^4}\|\hat{U}_{\boldsymbol \theta}\| + \frac{2Lr_2+1}{r_1^2}\cdot \\
		&\|\hat{U}_{\boldsymbol r}\| +\mathcal{H}(L^2+1)^{\frac{\alpha}{2}}\left(1+\frac{2}{r_1}+\frac{3}{r_1^2}\right).
		\end{align*}
		Similarly $|U_{\boldsymbol x \boldsymbol y}(z_2) - U_{\boldsymbol x \boldsymbol y}(z_1)| \le \frac{1}{2}|\hat{U}_{\boldsymbol r \boldsymbol r}(\alpha_2,\theta_2) - \hat{U}_{\boldsymbol r \boldsymbol r}(\alpha_1,\theta_1)| + |\theta_2 - \theta_1||\hat{U}_{\boldsymbol r \boldsymbol r}(\alpha_1,\theta_1)|$ \\
		$+ \frac{1}{\alpha_2}|\hat{U}_{\boldsymbol r \boldsymbol \theta}(\alpha_2,\theta_2) - \hat{U}_{\boldsymbol r \boldsymbol \theta}(\alpha_1,\theta_1)| + \left|\frac{\cos 2\theta_2}{\alpha_2} - \frac{\cos 2\theta_1}{\alpha_1}\right||\hat{U}_{\boldsymbol r \boldsymbol \theta}(\alpha_1,\theta_1)| + \frac{1}{2\alpha_2^2}|\hat{U}_{\boldsymbol \theta \boldsymbol \theta}(\alpha_2,\theta_2) - \hat{U}_{\boldsymbol \theta \boldsymbol \theta}(\alpha_1,\theta_1)| + \left| \frac{\sin 2\theta_2}{2\alpha_2^2} - \frac{\sin 2\theta_1}{2\alpha_1^2}\right||\hat{U}_{\boldsymbol \theta \boldsymbol \theta}(\alpha_1,\theta_1)| + \left(\frac{1}{\alpha_2} + \frac{1}{\alpha_2^2} \right)|\hat{U}_{\boldsymbol \theta}(\alpha_2,\theta_2) - \hat{U}_{\boldsymbol \theta}(\alpha_1,\theta_1)| + \left(\left|\frac{\sin \theta_2}{\alpha_2} - \frac{\sin 2\theta_1}{\alpha_1}\right| + \left|\frac{\cos^2 \theta_2}{\alpha_2^2} - \frac{\cos^2 \theta_1}{\alpha_1^2}\right|\right)|\hat{U}_{\boldsymbol \theta}(\alpha_1,\theta_1)| + \frac{1}{2\alpha_2}|\hat{U}_{\boldsymbol r}(\alpha_2,\theta_2) - \hat{U}_{\boldsymbol r}(\alpha_1,\theta_1)| + \left|\frac{\sin 2\theta_2}{2\alpha_2} - \frac{\sin 2\theta_1}{2\alpha_1}\right||\hat{U}_{\boldsymbol r}(\alpha_1,\theta_1)|$, and it follows that
		\begin{equation}
		|U_{\boldsymbol x \boldsymbol y}(z_2) - U_{\boldsymbol x \boldsymbol y}(z_1)| \lesssim |z_2 - z_1|^{\alpha},
		\end{equation}
		where a proportionality constant is found to be
		\begin{align*}
		L\|\hat{U}_{\boldsymbol r \boldsymbol r}\| &+ \frac{2Lr_2+1}{r_1^2}\|\hat{U}_{\boldsymbol r \boldsymbol \theta}\| + \frac{Lr_2^2+r_2}{r_1^4}\|\hat{U}_{\boldsymbol \theta \boldsymbol \theta}\| + \|\hat{U}_{\boldsymbol r}\|\frac{2Lr_2+1}{r_1^2} \\ 
		&+ \frac{Lr_2(r_1^2+2r_2+1)+2r_2 +r_1^2}{r_1^4}\|\hat{U}_{\boldsymbol \theta}\| + \mathcal{H}\frac{r_1^2 + 3r_1 + 3}{2r_1^2}(L^2+1)^{\frac{\alpha}{2}}.
		\end{align*}
		Finally $|U_{\boldsymbol y \boldsymbol y}(z_2) - U_{\boldsymbol y \boldsymbol y}(z_1)| \le |\hat{U}_{\boldsymbol r \boldsymbol r}(\alpha_2,\theta_2) - \hat{U}_{\boldsymbol r \boldsymbol r}(\alpha_1,\theta_1)| + 2|\theta_2 - \theta_1||\hat{U}_{\boldsymbol r \boldsymbol r}(\alpha_1,\theta_1)| + \frac{1}{\alpha_2}|\hat{U}_{\boldsymbol r \boldsymbol \theta}(\alpha_2,\theta_2) - \hat{U}_{\boldsymbol r \boldsymbol \theta}(\alpha_1,\theta_1)| + \left|\frac{\sin 2\theta_2}{\alpha_2} - \frac{\sin 2\theta_1}{\alpha_1}\right||\hat{U}_{\boldsymbol r \boldsymbol \theta}(\alpha_1,\theta_1)| + \frac{1}{\alpha_2^2}|\hat{U}_{\boldsymbol \theta \boldsymbol \theta}(\alpha_2,\theta_2) - \hat{U}_{\boldsymbol \theta \boldsymbol \theta}(\alpha_1,\theta_1)| + \left| \frac{\cos^2 \theta_2}{\alpha_2^2} - \frac{\cos^2 \theta_1}{\alpha_1^2}\right||\hat{U}_{\boldsymbol \theta \boldsymbol \theta}(\alpha_1,\theta_1)| + \left(\frac{1}{2} + \frac{1}{2\alpha_2^2} \right)|\hat{U}_{\boldsymbol \theta}(\alpha_2,\theta_2) - \hat{U}_{\boldsymbol \theta}(\alpha_1,\theta_1)| + \frac{1}{2}\left|\sin 2\theta_2\left( 1+\frac{1}{\alpha_2^2} \right) - \sin 2\theta_1 \left( 1+\frac{1}{\alpha_1^2} \right) \right||\hat{U}_{\boldsymbol \theta}(\alpha_1,\theta_1)| + \frac{1}{\alpha_2}|\hat{U}_{\boldsymbol r}(\alpha_2,\theta_2) - \hat{U}_{\boldsymbol r}(\alpha_1,\theta_1)| + \left|\frac{\cos^2 \theta_2}{\alpha_2} - \frac{\cos^2 \theta_1}{\alpha_1}\right||\hat{U}_{\boldsymbol r}(\alpha_1,\theta_1)|$, and thus one obtains
		\begin{equation}
		|U_{\boldsymbol y \boldsymbol y}(z_2) - U_{\boldsymbol y \boldsymbol y}(z_1)| \lesssim |z_2 - z_1|^{\alpha},
		\end{equation}
		where a proportionality constant can be taken as
		\begin{align*}
		\frac{2Lr_2+1}{r_1^2}\|\hat{U}_{\boldsymbol r \boldsymbol \theta}\| &+ \frac{2Lr_2^2+r_2}{r_1^4}\|\hat{U}_{\boldsymbol \theta \boldsymbol \theta}\| +\|\hat{U}_{\boldsymbol r}\|\frac{2Lr_2+1}{r_1^2} + \frac{Lr_1^2(r_1^2+1)+r_2}{r_1^4}\|\hat{U}_{\boldsymbol \theta}\| \\
		&+ 2L\|\hat{U}_{\boldsymbol r \boldsymbol r}\| + \mathcal{H}\frac{2r_1^2+4r_1+3}{2r_1^2}(L^2+1)^{\frac{\alpha}{2}}.
		\end{align*}
		
		To show the locally $\alpha$ H{\"o}lder continuity property on $\overline{\boldsymbol A_{r_1;r_2}}$ for the third order partial derivatives of $U$ plug $z = (r\cos\theta,r\sin\theta)$ in the first and last equations of \eqref{second order partial derivatives U cartesian Th4} and take again derivatives with respect to both $r$ and $\theta$ to obtain after some algebraic manipulations 
		\begin{align*}
		U_{\boldsymbol x \boldsymbol x \boldsymbol x}&(z) = \hat{U}_{\boldsymbol r \boldsymbol r \boldsymbol r}(r,\theta)\cos^3\theta - \frac{3}{2}\hat{U}_{\boldsymbol \theta \boldsymbol r \boldsymbol r}(r,\theta)\frac{\sin 2\theta \cos\theta}{r} + \frac{\sin^2\theta(1+2\cos\theta)}{r^2}\cdot \\
		&\hat{U}_{\boldsymbol \theta \boldsymbol \theta \boldsymbol r}(r,\theta) - \hat{U}_{\boldsymbol \theta \boldsymbol \theta \boldsymbol \theta}(r,\theta)\frac{\sin^3\theta}{r^3} + \hat{U}_{\boldsymbol \theta \boldsymbol r}(r,\theta)\frac{3\cos^2\theta \sin\theta + \cos^2\theta + 2\cos 2\theta \sin\theta}{r^2} \\
		&+ \hat{U}_{\boldsymbol r \boldsymbol r}(r,\theta)\frac{3\sin^2\theta \cos\theta}{r} + \hat{U}_{\boldsymbol \theta \boldsymbol \theta}(r,\theta)\frac{\sin^2\theta (1+\sin\theta+3\cos\theta)}{r^3} - \frac{3\sin^2\theta \cos\theta}{r^2}\cdot \\
		&\hat{U}_{\boldsymbol r}(r,\theta) - \hat{U}_{\boldsymbol \theta}(r,\theta)\frac{2\cos^2\theta(1+\sin\theta) - \sin^2\theta + \sin\theta \cos 2\theta}{r^3},
		\end{align*}
		\begin{align}
		U_{\boldsymbol x \boldsymbol x \boldsymbol y}&(z) = \hat{U}_{\boldsymbol \theta \boldsymbol \theta \boldsymbol \theta}(r,\theta)\frac{\sin^2\theta \cos\theta}{r^3} + \hat{U}_{\boldsymbol \theta \boldsymbol r}(r,\theta)\frac{2\sin 2\theta \sin\theta + \cos\theta (\sin\theta - 2\cos 2\theta)}{r^2} \nonumber\\
		&+ \hat{U}_{\boldsymbol \theta \boldsymbol \theta}(r,\theta)\frac{\sin 2\theta \cos\theta - 2\sin^3\theta - \sin 2\theta (1+\sin\theta)}{r^3} + \hat{U}_{\boldsymbol r \boldsymbol r \boldsymbol r}(r,\theta)\cos^2\theta \sin\theta \nonumber \\	
		&+ \hat{U}_{\boldsymbol \theta \boldsymbol r \boldsymbol r}(r,\theta)\frac{\cos^3\theta - \sin 2\theta \sin\theta}{r} + \hat{U}_{\boldsymbol \theta}(r,\theta)\frac{\cos\theta (\cos 2\theta - 2\sin\theta - \sin^2\theta)}{r^3} \nonumber \\
		&+ \hat{U}_{\boldsymbol r \boldsymbol r}(r,\theta)\frac{\sin\theta(\sin^2\theta - 2\cos^2\theta)}{r} + \hat{U}_{\boldsymbol \theta \boldsymbol \theta \boldsymbol r}(r,\theta)\frac{\sin^3\theta - \sin 2\theta \cos\theta}{r^2} + \hat{U}_{\boldsymbol r}(r,\theta)\cdot \nonumber \\
		&\frac{\sin\theta (2\cos^2\theta - \sin^2\theta)}{r^2},
		\end{align}
		\begin{align*}
		U_{\boldsymbol y \boldsymbol y \boldsymbol x}&(z) = - \hat{U}_{\boldsymbol \theta \boldsymbol r}(r,\theta)\frac{\sin\theta \cos\theta + \sin\theta \cos^2\theta(r^2+1) + 2\cos 2\theta \sin\theta + \cos^2\theta \sin\theta}{r^2} \\
		&+ \hat{U}_{\boldsymbol \theta}(r,\theta)\frac{\sin 2\theta \cos\theta + \cos 2\theta \sin\theta(r^2+1)}{r^3} + \hat{U}_{\boldsymbol \theta \boldsymbol r \boldsymbol r}(r,\theta)\frac{\sin 2\theta \cos\theta - \sin^3 \theta}{r} \\
		&+ \hat{U}_{\boldsymbol \theta \boldsymbol \theta \boldsymbol r}(r,\theta)\frac{\cos^3\theta - \sin 2\theta \sin\theta}{r^2} - \hat{U}_{\boldsymbol \theta \boldsymbol \theta \boldsymbol \theta}(r,\theta)\frac{\cos^2\theta \sin\theta}{r^3} + \sin^2\theta \cos\theta\cdot \\
		&\hat{U}_{\boldsymbol r \boldsymbol r \boldsymbol r}(r,\theta) + \hat{U}_{\boldsymbol r \boldsymbol r}(r,\theta)\frac{\cos^3\theta - \sin 2\theta \sin\theta}{r} + \frac{\sin^2\theta \cos\theta (r^2+3) - 2\cos^3\theta}{r^3}\cdot \\
		&\hat{U}_{\boldsymbol \theta \boldsymbol \theta}(r,\theta) + \hat{U}_{\boldsymbol r}(r,\theta)\frac{\sin 2\theta \sin\theta - \cos^3\theta}{r^2},
		\end{align*}
		\begin{align*}
		U_{\boldsymbol y \boldsymbol y \boldsymbol y}&(z) = \hat{U}_{\boldsymbol \theta \boldsymbol r}(r,\theta)\frac{\cos^3\theta + 2\cos2\theta \cos\theta - \sin^2\theta - \cos\theta \sin^2\theta (r^2+1)}{r^2} + \sin^3\theta\cdot \\
		&\hat{U}_{\boldsymbol r \boldsymbol r \boldsymbol r}(r,\theta) + \hat{U}_{\boldsymbol \theta}(r,\theta)\frac{\sin 2\theta \sin\theta - \cos 2\theta \cos\theta (r^2+1)}{r^3} + \frac{3\cos^2\theta \sin\theta}{r}\cdot \\
		&\hat{U}_{\boldsymbol r \boldsymbol r}(r,\theta) - \hat{U}_{\boldsymbol \theta \boldsymbol \theta}(r,\theta)\frac{\cos^2\theta \sin\theta(r^2+5)}{r^3} + \hat{U}_{\boldsymbol \theta \boldsymbol r \boldsymbol r}(r,\theta)\frac{3\sin^2\theta \cos\theta}{r} + \frac{\cos^3\theta}{r^3}\cdot \\
		&\hat{U}_{\boldsymbol \theta \boldsymbol \theta \boldsymbol \theta}(r,\theta) + \hat{U}_{\boldsymbol \theta \boldsymbol \theta \boldsymbol r}(r,\theta)\frac{3\cos^2\theta \sin\theta}{r^2} + \hat{U}_{\boldsymbol r}(r,\theta)\frac{3\cos^2\theta \sin\theta}{r^2}.
		\end{align*}
		Using the above relations and the same ideas as for the first and second order partial derivatives of $U$, it can be checked that the third order partial derivatives of $U$ are locally $\alpha$ H{\"o}lder continuous on $\overline{\boldsymbol A_{r_1;r_2}}$ as well. 
		
		To sum up, it has been proved that the partial derivatives of $U$ up to order three are locally $\alpha$ H{\"o}lder continuous on $\overline{\boldsymbol A_{r_1;r_2}}$, and a compactness argument shows that this is enough to argue that they are in fact uniform H{\"o}lder continuous with exponent $\alpha$. In conclusion, the theorem is now proved for the case when $r_2 \le 0.5$. If $r_2 > 0.5$ performing a scaling of the annulus by $(2r_2)^{-1}$ and applying the previous conclusions to the function $U_s(w) := U(2r_2w)$ defined on the scaled annulus, it is immediately seen that $U$ together with all its partial derivatives up to order three are uniformly H{\"o}lder continuous with exponent $\alpha$ on $\overline{\boldsymbol A_{r_1,r_2}}$. The proof is now completed. 
	\end{proof}
	
	\begin{remark} The constant $\mathcal{C}$ which appears in both \eqref{first appearance of C} and \eqref{C reprezentare} has an interesting interpretation. Cutting the annulus $\boldsymbol A_{r_1,r_2}$ along the negative real axis for example, that is defining $\boldsymbol A_{r_1,r_2}^- = \{z \in \boldsymbol A_{r_1,r_2}| z \not\in \mathbb{R}_- \}$, the solution $u$ of the Dirichlet problem \eqref{Dirichlet problem} having boundary data $g(z) = \begin{cases}r_2f(z) &~\text{if } |z|=r_2, \\ -r_1f(z) &~\text{if } |z|=r_1,\end{cases}$ has an harmonic-conjugate function $v_0$ on $\boldsymbol A_{r_1,r_2}^-$ satisfying $v_0(\sqrt{r_1r_2}) = 0$. Then it is claimed that
		\begin{equation}
		\mathcal{C} = \frac{1}{2\pi}\int\limits_{-\pi}^{\pi}v_0(\sqrt{r_1r_2}e^{i\theta})~ d\theta.
		\end{equation} 
		Indeed assume first $r_1 = 1/a < a = r_2$ and denoting $\hat{u}(r,\theta) = u(re^{i\theta}), ~(r,\theta) \in \left[\frac{1}{a},a\right] \times \mathbb{R}$, we have $\mathcal{C} = \frac{1}{2\pi}\int\limits_{0}^{2\pi}\int\limits_0^t \frac{\partial u}{\partial a_{\boldsymbol \tau}}(e^{i\tau}) d\tau dt = \frac{1}{2\pi}\int\limits_{0}^{2\pi}\int\limits_0^t \hat{u}_{\boldsymbol r}(1,\tau) d\tau dt$. According to the proof of Theorem \ref{main theorem} the application $t \rightarrow \int\limits_0^t \hat{u}_{\boldsymbol r}(1,\tau)d\tau$ is $2\pi$-periodic and thus $\mathcal{C} = \frac{1}{2\pi}\int\limits_{-\pi}^0\int\limits_0^t \hat{u}_{\boldsymbol r}(1,\tau) d\tau dt + \frac{1}{2\pi}\int\limits_0^{\pi}\int\limits_0^t \hat{u}_{\boldsymbol r}(1,\tau) d\tau dt = -\frac{1}{2\pi}\int\limits_{-\pi}^0\int\limits_{\gamma_t^-}d^*u + \frac{1}{2\pi}\int\limits_0^{\pi}\int\limits_{\gamma_t^+}d^*u$, where $\gamma_t^-:[t,0] \rightarrow \mathbb{C}, \ \gamma_t^-(\tau) = e^{i\tau}, \ t< 0$, $\gamma_t^+:[0,t] \rightarrow \mathbb{C}, \ \gamma_t^+(\tau) = e^{i\tau}, \ t \ge 0$, and where $d^*u$ denotes the conjugate differential of $u$ (see \cite[Chapter 4.6.1]{Ahlfors}). But $-\int\limits_{\gamma_t^-}d^*u = v_0(e^{it}) - v_0(1)$ and also $\int\limits_{\gamma_t^+}d^*u = v_0(e^{it}) - v_0(1)$, and since $v_0(1) = 0$ the proof is complete for the case when $r_1 = 1/a < a = r_2$. The general case $0 < r_1 < r_2 < \infty$ follows by the previous one by performing a scaling with $\lambda = \frac{1}{\sqrt{r_1r_2}}$ and defining $a = \sqrt{\frac{r_2}{r_1}}$.
	\end{remark}
	
	The next corollary shows that Theorem \ref{main result cartesian} (or equivalently Theorem \ref{main theorem}) is a generalization of the main result in \cite{Beznea1} (actually its first part is exactly Theorem 1 in \cite{Beznea1} when the unit ball has dimension $2$). This will show, in particular, that the theory presented so far is a more powerful tool in $\mathbb{R}^2$ which embeds the main result in \cite{Beznea1} as a particular case. Moreover, if additional assumptions on the smoothness of $f$ are provided, the result in \cite{Beznea1} can be strengthened to uniform Holder continuity of $U$ and its partial derivatives.

	\begin{corollary}\label{main corollary}
		Assume $f: \partial \mathbb{U} \rightarrow \mathbb{R}$ is continuous and satisfies $\int\limits_0^{2\pi}f~d\theta = 0$. If $U$ is the solution of the Neumann problem \eqref{Neumann problem} on $\mathbb{U}$ with boundary data $f$, satisfying $U(0)=0$, then 
		\begin{equation}\label{U Corl2}
		U(z) = \int\limits_0^1 \frac{u(\rho z)}{\rho}d\rho, ~z \in \overline{\mathbb{U}},
		\end{equation}
		where $u$ is the solution of the Dirichlet problem \eqref{Dirichlet problem} on $\mathbb{U}$ with boundary data $g=f$. If $f \in C^{m,\alpha}(\partial \mathbb{U})$ for some positive integer $m \ge 2$ and some $\alpha \in (0,1]$ then $U$ and all its partial derivatives up to order $m+1$ are uniformly H{\"o}lder continuous with exponent $\alpha$ on $\mathbb{U}$. Conversely if $g: \partial \mathbb{U} \rightarrow \mathbb{R}$ is continuous, satisfies $\int\limits_0^{2\pi} g ~d\theta = 0$, and if $U$ is a solution of the Neumann problem \eqref{Neumann problem} on $\mathbb{U}$ with boundary data $f = g$, then the solution $u$ of the Dirichlet problem \eqref{Dirichlet problem} on $\mathbb{U}$ with boundary data $g$ is given by
		\begin{equation}
		u(re^{i\theta}) = r\frac{\partial U}{\partial \boldsymbol a_{\theta}}(re^{i\theta}), ~re^{i\theta} \in \overline{\mathbb{U}}.
		\end{equation}
	\end{corollary}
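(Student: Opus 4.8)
The plan is to obtain the corollary from Theorem~\ref{main result cartesian}: its first half is the degenerate case $r_1\searrow0$, $r_2=1$ (and coincides with Theorems~1 and~3 of \cite{Beznea1} in dimension two), while the H\"older assertion is deduced by localising to a fixed sub-annulus on which Theorem~\ref{main result cartesian} applies directly. I would establish \eqref{U Corl2} self-containedly. Put $V(z):=\int_0^1\frac{u(\rho z)}{\rho}\,d\rho$; this is well defined on $\mathbb{U}$ because $u(0)=\frac{1}{2\pi}\int_0^{2\pi}f\,d\theta=0$ keeps the integrand bounded at $\rho=0$. Differentiating under the integral sign and using $\Delta_z\big(u(\rho z)\big)=\rho^{2}(\Delta u)(\rho z)=0$ shows $V$ is harmonic; $V(0)=0$ is immediate; the substitution $s=\rho r$ with the fundamental theorem of calculus gives $\frac{\partial V}{\partial\boldsymbol a_{\theta}}(re^{i\theta})=\frac{u(re^{i\theta})}{r}$, hence $\frac{\partial V}{\partial\boldsymbol\nu}=u|_{\partial\mathbb{U}}=f$ on $\partial\mathbb{U}$, and the same identity together with its angular counterpart shows $\nabla V$ extends continuously to $\overline{\mathbb{U}}$; uniqueness of the Neumann solution up to an additive constant, fixed by $V(0)=U(0)=0$, then forces $V=U$. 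The converse follows at once: since adding a constant does not change $\frac{\partial U}{\partial\boldsymbol a_{\theta}}$, one may assume $U(0)=0$, apply the part just proved to get $U(z)=\int_0^1\frac{u_0(\rho z)}{\rho}\,d\rho$ with $u_0$ the Dirichlet solution for $g$, and read off $r\,\frac{\partial U}{\partial\boldsymbol a_{\theta}}(re^{i\theta})=u_0(re^{i\theta})$. (That \eqref{U Corl2} is the limit of \eqref{U cartesian} as $r_1\searrow0$, $r_2=1$, with inner Neumann datum $0$, can be seen by noting that the compatibility condition collapses to $\int_0^{2\pi}f\,d\theta=0$, that the annular Dirichlet solutions converge to $u$ by the maximum principle, that the first integral in \eqref{U cartesian} tends to $\int_0^1\frac{u(\rho z)}{\rho}\,d\rho$, and that the term carrying the factor $\sqrt{r_1r_2}=\sqrt{r_1}$ drops out.)

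For the H\"older statement the decisive idea is to localise away from the origin. Fix $\rho_0\in(0,1)$, say $\rho_0=\tfrac{1}{4}$. Since $U\in C^{1}(\overline{\mathbb{U}})$ by hypothesis, its restriction to $\boldsymbol A_{\rho_0,1}$ is an admissible annular Neumann solution, with outer datum $f$ on $C_1$ and inner datum $h:=\frac{\partial U}{\partial\boldsymbol\nu}=-\frac{\partial U}{\partial n}$ on $C_{\rho_0}$, where $n$ denotes the outer normal of $\{|z|<\rho_0\}$. Because $U$ is harmonic, hence $C^{\infty}$, on $\{|z|<2\rho_0\}\supset\overline{\{|z|\le\rho_0\}}$, the datum $h$ is $C^{\infty}$ on $C_{\rho_0}$, in particular $h\in C^{m,\alpha}(C_{\rho_0})$; moreover the divergence theorem on $\{|z|<\rho_0\}$ gives $\int_{C_{\rho_0}}h\,d\sigma=-\int_{C_{\rho_0}}\frac{\partial U}{\partial n}\,d\sigma=-\int_{\{|z|<\rho_0\}}\Delta U\,dm=0$, which with $\int_{C_1}f\,d\sigma=0$ is precisely the compatibility condition on $\partial\boldsymbol A_{\rho_0,1}$. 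Hence the H\"older part of Theorem~\ref{main result cartesian}, applied with $r_1=\rho_0$, $r_2=1$, shows that $U$ and all its partial derivatives up to order $m+1$ extend continuously to $\overline{\boldsymbol A_{\rho_0,1}}$ and are uniformly H\"older continuous with exponent $\alpha$ there (the solution produced in that theorem differs from ours by a constant, which is harmless). On the compact set $\overline{\{|z|\le\rho_0\}}$, $U$ is $C^{\infty}$, so its derivatives up to order $m+1$ are uniformly H\"older there too; and the two estimates patch: if $z_1,z_2$ lie on opposite sides of $C_{\rho_0}$, the segment $[z_1,z_2]$ meets $C_{\rho_0}$ at a point $z_0$ with $|z_1-z_0|+|z_0-z_2|=|z_1-z_2|$, and since every partial derivative of $U$ of order $\le m+1$ is continuous across $C_{\rho_0}$ and bounded, the triangle inequality yields the uniform H\"older estimate on all of $\overline{\mathbb{U}}$.

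The step I expect to be the main obstacle --- and the reason one cannot just let $r_1\searrow0$ in Theorem~\ref{Holder continuity polar theorem} --- is this localisation: the proportionality constants in Theorem~\ref{Holder continuity polar theorem} (equivalently in Theorem~\ref{main result cartesian}) involve negative powers of $r_1$ and blow up as $r_1\to0$, so the disk case genuinely has to be recovered by keeping $\rho_0>0$ fixed and drawing the (automatically smooth) inner datum from the interior regularity of $U$. The only truly delicate bookkeeping is verifying that this restriction is an admissible Neumann datum on $\partial\boldsymbol A_{\rho_0,1}$ --- continuity, $C^{m,\alpha}$ regularity, and above all the compatibility identity $\int_{C_{\rho_0}}\frac{\partial U}{\partial\boldsymbol\nu}\,d\sigma=0$ furnished by the divergence theorem; once that is in place, the regularity of $U$ and of its first $m+1$ derivatives is inherited from the annular theorem together with the trivial interior estimate near the origin.
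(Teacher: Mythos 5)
Your treatment of the H\"older statement and of the converse is essentially the paper's own argument. The paper writes $\overline{\mathbb{U}}=\overline{\boldsymbol A_{\frac13;1}}\cup\frac12\mathbb{U}$, feeds the annulus the inner Neumann datum $-U_{\boldsymbol x}\cos\theta-U_{\boldsymbol y}\sin\theta$ on $C_{1/3}$ (your $h=-\partial U/\partial n$ on $C_{\rho_0}$), checks the compatibility condition exactly as you do, invokes Theorem \ref{main result cartesian} on the fixed sub-annulus, and uses interior regularity plus a compactness/patching step near the origin; your segment-splitting patch across $C_{\rho_0}$ is an acceptable substitute for the paper's overlapping decomposition. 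Your diagnosis that one cannot simply let $r_1\searrow 0$ because the proportionality constants carry negative powers of $r_1$ is precisely the point.

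Where you diverge is the representation formula \eqref{U Corl2}, and there your shortcut has a gap. The paper does not verify $V(z)=\int_0^1\frac{u(\rho z)}{\rho}\,d\rho$ directly: it builds $V$ as the locally uniform limit of the annular solutions $U_n$ on $\boldsymbol A_{1/n^2;1}$ and then identifies $V$ with $U$ by showing $\nabla(V-W)=0$ for a known solution $W$, via Green's first identity on an exhausting sequence of disks together with monotone and dominated convergence. Your version asserts that the radial identity $\frac{\partial V}{\partial\boldsymbol a_\theta}(re^{i\theta})=\frac{u(re^{i\theta})}{r}$ ``together with its angular counterpart'' shows $\nabla V$ extends continuously to $\overline{\mathbb{U}}$, and only then invokes uniqueness. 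The radial part is fine, but the angular counterpart is $V_{\boldsymbol\theta}(r,\theta)=\int_0^r\frac{u_{\boldsymbol\theta}(s,\theta)}{s}\,ds$, and for boundary data that is merely continuous the tangential derivative $u_{\boldsymbol\theta}$ of the Dirichlet solution need not stay bounded (nor be uniformly integrable) as $s\nearrow 1$; you cannot read continuity of $V_{\boldsymbol\theta}$ up to $\partial\mathbb{U}$ off this formula. Without that, $V$ is not yet certified to lie in $C^1(\overline{\mathbb{U}})$, so the appeal to ``uniqueness of the Neumann solution'' is circular. The fix is either the paper's comparison argument (which only needs boundedness of $\partial V/\partial\boldsymbol\nu$ on the sub-circles, supplied by the radial identity), or to prove the converse identity $u=rU_{\boldsymbol r}$ first --- which is independent, since $xU_{\boldsymbol x}+yU_{\boldsymbol y}$ is harmonic and attains the boundary values $f$ --- and then deduce $V_{\boldsymbol\theta}=U_{\boldsymbol\theta}$. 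As written, this step does not close.
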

	\begin{proof}
		Define $r_n = r_1(n) = \frac{1}{n^2}, \ \boldsymbol A_n = \boldsymbol A_{r_n;1}, ~ n\in \mathbb{N} \setminus \{0,1\}$. On $\boldsymbol A_n$ let $u_n(\cdot)$ be the solution of the Dirichlet problem \eqref{Dirichlet problem} with boundary data $g_n = u_{| \partial \boldsymbol A_n}$. By the uniqueness of the solution of the Dirichlet problem it follows that $u_n = u$ on $\boldsymbol A_n$. Next define $U_n(re^{i\theta}) = \int\limits_{\frac{1}{nr}}^1\frac{u_n(\rho re^{i\theta})}{\rho}d\rho + \frac{1}{n}\int\limits_0^{\theta}\left(\mathcal{C}_n - \int\limits_0^t \frac{\partial u_n}{\partial \boldsymbol a_{\tau}}(\frac{e^{i\tau}}{n})d\tau \right)dt, ~re^{i\theta} \in \overline{\boldsymbol A_n}$, where $\mathcal{C}_n = \frac{1}{2\pi n}\int\limits_0^{2\pi}\int\limits_0^t \frac{\partial u_n}{\partial \boldsymbol a_{\tau}}(\frac{e^{i\tau}}{n})d\tau dt$. It follows by Theorem \ref{main result cartesian} that $U_n$ is harmonic in $\boldsymbol A_n$, has normal derivative $f_n(z) = \begin{cases}f(z) \ &\text{if } |z| = 1, \\
		-n^2u(z) \ &\text{if } |z| = \frac{1}{n^2}, \end{cases}$ and satisfies $U_n(\frac{1}{n}) = 0$. Further let $K \subset \boldsymbol A_{0;1}$ be any compact set. Hence $\exists N \in \mathbb{N} \setminus \{0;1\}$ (possibly depending on $K$) such that $ \forall n \ge N \Rightarrow K \subset \boldsymbol A_n$. So choose any $n \ge N$ and any $p \in \mathbb{N}^*$, and consequently $|U_{n+p}(re^{i\theta})-U_n(re^{i\theta})| \le \int\limits_{\frac{1}{r(n+p)}}^{\frac{1}{rn}} \left|\frac{u(\rho re^{i\theta})}{\rho}\right|d\rho + \left|\frac{1}{n+p}\int\limits_0^{\theta}\left( C_{n+p}-\int\limits_0^t\frac{\partial u}{\partial \boldsymbol a_{\tau}}(\frac{e^{i\tau}}{n+p}) d\tau\right)dt - \frac{1}{n}\int\limits_0^{\theta}\left( C_{n}-\int\limits_0^t\frac{\partial u}{\partial \boldsymbol a_{\tau}}(\frac{e^{i\tau}}{n}) d\tau\right)dt\right|$. To evaluate \\
		the first term, notice first that since $u \in C^0(\overline{\mathbb{U}}) \cap C^1(\mathbb{U})$ we have $\lim\limits_{\rho \searrow 0} \frac{u(\rho re^{i\theta})}{\rho} = u_{\boldsymbol x}(0)r\cos\theta + u_{\boldsymbol y}(0)r\sin\theta = r\frac{\partial u}{\partial \boldsymbol a_{\theta}}(0), ~\forall ~re^{i\theta} \in \mathbb{U}$. Hence $\exists M_1 > 0$ such that $\left|\frac{u(\rho re^{i\theta})}{\rho}\right| \le M_1, ~\forall ~\rho \in [0,1], ~\forall ~re^{i\theta} \in \mathbb{U}$. On the other hand, since $0 \not\in K$, there is a $\delta > 0$ (which may depend on $K$ as well) such that $d(0,K) = \delta$.The last two observations in turn imply $\int\limits_{\frac{1}{r(n+p)}}^{\frac{1}{rn}} \left|\frac{u(\rho re^{i\theta})}{\rho} \right|d\rho \le \frac{pM_1}{\delta n(n+p)}, ~ \forall re^{i\theta} \in K, ~\forall ~n\ge N, ~\forall ~p\in \mathbb{N}^*$. Next since $u \in C^1(\mathbb{U})$ it can be concluded that $\nabla u$ is bounded on, say, $|z| \le 2/3$ which in turn shows that one can choose $M_2 > 0$ for which $\left|\nabla u(\frac{e^{i\tau}}{n})\right| \le M_2, ~\forall ~\tau \in \mathbb{R}$. So $|\mathcal{C}_n| \le \frac{1}{2n\pi}\int\limits_0^{2\pi}\int\limits_0^t M_2~ d\tau dt = \frac{\pi M_2}{n}, ~\forall n \in \mathbb{N}^* \setminus \{1\}$. Finally, we obtain $\frac{pM_1}{\delta n(n+p)}+\theta\pi M_2\left(\frac{1}{n^2}-\frac{1}{(n+p)^2} \right) + \theta^2\frac{M_2}{n} \ge \int\limits_{\frac{1}{r(n+p)}}^{rn} \left|\frac{u\left(\rho re^{i\theta}\right)}{\rho}\right|d\rho +$ \\
		$\left|\frac{1}{n+p}\int\limits_0^{\theta}\left( \mathcal{C}_{n+p}-\int\limits_0^t\frac{\partial u}{\partial \boldsymbol a_{\tau}}(\frac{e^{i\tau}}{n+p}) d\tau\right)dt - \frac{1}{n}\int\limits_0^{\theta}\left( \mathcal{C}_{n}-\int\limits_0^t\frac{\partial u}{\partial \boldsymbol a_{\tau}}(\frac{e^{i\tau}}{n}) d\tau\right)dt\right|$, where the \\
		last term is greater than $|U_{n+p}(re^{i\theta})-U_n(re^{i\theta})|$. Since we can consider without loss of generality $\theta \in (-\pi,\pi]$, it follows that the sequence of harmonic functions $\{U_n\}_{n=2}^{\infty}$ is uniformly Cauchy on $K$, and hence on any compact subset of $\boldsymbol A_{0;1}$. Setting $\mathcal{U}(z) = \int\limits_0^1 \frac{u(\rho z)}{\rho}d\rho, ~z \in \mathbb{U}$, it is easy to see that $\lim\limits_{n \rightarrow \infty} U_n(z) = \int\limits_0^{1}\frac{u\left(\rho z\right)}{\rho}d\rho = \mathcal{U}(z)$ on $\boldsymbol A_{0;1}$. Hence $\mathcal{U}$ is harmonic in $\boldsymbol A_{0;1}$. In addition, using the \textit{Dominant Convergence} theorem, it follows that $\lim\limits_{z \rightarrow 0} \mathcal{U}(z) = 0$. This shows that $\mathcal{U}$ can be (uniquely) extended to a harmonic function in the whole unit disk, which shall also be denoted for brevity $\mathcal{U}$. It is not difficult to check that $\mathcal{U}$ can actually be extended by continuity to $\overline{\mathbb{U}}$. Finally $\frac{\partial \mathcal{U}}{\partial \boldsymbol \nu}(e^{i\theta}) = \lim\limits_{\epsilon \nearrow 0} \frac{\mathcal{U}(e^{i\theta} + \epsilon e^{i\theta}) - \mathcal{U}(e^{i\theta})}{\epsilon} = \lim\limits_{\epsilon \nearrow 0} \frac{1}{\epsilon} \int\limits_1^{1+\epsilon} \frac{u(\rho e^{i\theta})}{\rho}~ d\rho = u(e^{i\theta}) = g(e^{i\theta}) = f(e^{i\theta}), ~\forall ~\theta\in \mathbb{R}$. The continuous extension of $\nabla \mathcal{U}$ to $\overline{\mathbb{U}}$ follows by exactly the same arguments as those invoked in the proof of Theorem \ref{main theorem}; that is choosing any solution $V$ of the Neumann problem \eqref{Neumann problem} on $\mathbb{U}$ with boundary data $f$, and approximating the unit disk by an increasing sequence of disks of radii $r_n$, $r_n \nearrow 1$, the function $\mathcal{U}-V$ turns out to be constant on $\mathbb{U}$. In conclusion we have proved so far that $\mathcal{U}=U$, and so the solution of the Neumann problem \eqref{Neumann problem} on $\mathbb{U}$ with boundary data $f$ has the desired expresion provided by relation \eqref{U Corl2}. To complete the proof of the first part, it only remains to show that if $f \in C^{m,\alpha}(\partial \mathbb{U})$ for some positive integer $m \ge 2$ and some $\alpha \in (0,1]$, then $U$ and all its partial derivatives up to order $m+1$ are uniformly H{\"o}lder continuous with exponent $\alpha$ on $\mathbb{U}$. To this end we write
		\begin{equation}
		\overline{\mathbb{U}} = \overline{\boldsymbol A_{\frac{1}{3};1}} \cup \frac{1}{2}\mathbb{U},
		\end{equation}
		and denote by $U_1$ and $U_2$ the restrictions of $U$ to $\frac{1}{2}\mathbb{U}$ and $\overline{\boldsymbol A_{\frac{1}{3};1}}$, respectively. Further let $f_2: \partial \boldsymbol A_{\frac{1}{3};1} \rightarrow \mathbb{R}$, $f_2(re^{i\theta})= \begin{cases} f(e^{i\theta}), ~\text{if } r=1, \\ -U_{\boldsymbol x}(re^{i\theta})\cos\theta - U_{\boldsymbol y}(re^{i\theta})\sin\theta, ~\text{if } r=\frac{1}{3}, \end{cases}$ and observe that $f_2 \in C^{m,\alpha}(\partial \boldsymbol A_{\frac{1}{3};1})$; in addition $f_2$ satisfies the compatibility condition $\int\limits_{\partial \boldsymbol A_{\frac{1}{3};1}} f_2 d\sigma = 0$. According to Theorem \ref{main result cartesian} then, all the partial derivatives of $U_2$ up to order $m+1$ can be continuously extended to $\overline{\boldsymbol A_{\frac{1}{3};1}}$ and their extensions are uniformly H{\"o}lder continuous with exponent $\alpha$ there. Also, all the partial derivatives of $U_1$ are locally Lipschitz continuous on $\frac{1}{2}\mathbb{U}$ (due to the harmonicity of $U$), and consequently they are locally $\alpha$ H{\"o}lder continuous there. Appealing to the definitions of $U_1$ and $U_2$ it follows that $U$ together with all its partial derivatives up to order $m+1$ are locally $\alpha$ H{\"o}lder continuous on $\overline{\mathbb{U}}$, and using again a compactness argument concludes the first part of the proof.    
		
		For the second part denote $\hat{U}(r,\theta) = U(re^{i\theta}), ~\hat{u}(r,\theta) = u(re^{i\theta}), ~re^{i\theta}\in \overline{\mathbb{U}}$, where one can choose $U(0)=0$. Using the first part $\hat{U}(r,\theta)=\int\limits_0^1 \frac{\hat{u}(\rho r,\theta)}{\rho}~ d\rho = \int\limits_0^r \frac{\hat{u}(\rho,\theta)}{\rho}~ d\rho, ~re^{i\theta}\in \mathbb{U}$. Taking the derivative with respect to the first argument one obtains $\hat{U}_{\boldsymbol r}(r,\theta) = \frac{\hat{u}(r,\theta)}{r}$ or equivalently $\hat{u}(r,\theta) = r\hat{U}_{\boldsymbol r}(r,\theta)$, for any $r \in (0;1)$. Since $\hat{U}_{\boldsymbol r}(r,\theta) = \frac{\partial U}{\partial \boldsymbol a_{\theta}}(re^{i\theta}) \ \forall \theta \in \mathbb{R}$ the conclusion follows.  
	\end{proof}
	
	\subsection{General smooth, bounded, doubly-connected regions}
	
	Using the conformal invariance of harmonic functions and Theorem \ref{main result cartesian}, an important general result is obtained. Before stating it, some preparations are needed. First let $\boldsymbol D \subset \mathbb{C}$ be some smooth, doubly connected region whose boundary consists of two bounded Jordan curves which are the images of $\Gamma_i, ~i \in \{1,2\}$. It will be assumed that $\Gamma_1$ corresponds to the inner contour. Following the approach in \cite[Chapter 6]{Ahlfors} let $\omega_1$ be the harmonic measure of $\{\Gamma_1\}$ with respect to the region $\boldsymbol D$, and define $\alpha_1 = \int\limits_{\Gamma_1} \frac{\partial \omega_1}{\partial \boldsymbol n} ds$. Consequently define $\omega = \lambda_1 \omega_1$, where $\lambda_1 = \frac{2\pi}{\alpha_1}$, and letting $w=\xi+i\eta$ be the variable on $\boldsymbol D$ we also define $p = \frac{\partial \omega}{\partial \xi} - i\frac{\partial \omega}{\partial \eta}$, $q = \int p$ (where the integral is considered over any rectifiable curve having an extremity in $w_0$) and finally 
	\begin{equation*}
	G = e^q;
	\end{equation*}
	$w_0$ is an arbitrary point in $\boldsymbol D$ which is assumed to be fixed. Notice that $q$ is not single-valued, in general. However we will see in the lemma below that $G$ is actually a single-valued analytic function in $\boldsymbol D$. 
	\begin{lemma}\label{Auxiliary lemma for Theorems 5}
		Assume $\boldsymbol D \in C^{2;\alpha}$ for some $\alpha \in (0,1)$. Then $G$ defined above has the following properties.
		\begin{enumerate}
			\item[i.] $G$ is well defined on $\overline{\boldsymbol D}$;
			\item[ii.] $G(\overline{\boldsymbol D}) = \overline{\boldsymbol A_{1;e^{\lambda_1}}}$ and the mapping is one-to-one. In addition $G(\{\Gamma_2\}) = C_1$ and $G(\{\Gamma_1\}) = C_{e^{\lambda_1}}$, respectively;
			\item[iii.] $G$ is a conformal representation of $\boldsymbol D$ on $\boldsymbol A_{1;e^{\lambda_1}}$;
			\item[iv.] If $F= G^{-1}$ then the limit $\lim\limits_{z \rightarrow z^*,~ z \in \overline{\boldsymbol A_{1;e^{\lambda_1}}}} \frac{F(z^*) - F(z)}{z^* - z} =: F'(z^*)$ exists at all points $z^* \in \overline{\boldsymbol A_{1;e^{\lambda_1}}}$, and $F'$ can be extended by continuity to $\overline{\boldsymbol A_{1;e^{\lambda_1}}}$. 
			\item[v.] The limit $\lim\limits_{z \rightarrow z^*,~ z \in \overline{\boldsymbol A_{1;e^{\lambda_1}}}} \frac{F'(z^*) - F'(z)}{z^* - z} =: F''(z^*)$ exists at all points \\
			$z^* \in \overline{\boldsymbol A_{1;e^{\lambda_1}}}$, and $F''$ can be extended by continuity to $\overline{\boldsymbol A_{1;e^{\lambda_1}}}$.
		\end{enumerate} 
	\end{lemma}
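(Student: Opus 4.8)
The plan is to obtain parts~i--iii from the classical construction in \cite[Chapter~6]{Ahlfors} and to derive the boundary regularity in iv--v from Kellogg's theorem together with the inverse function theorem in H\"older classes. First I would record the regularity of $\omega$: since $\omega_1$ solves the Dirichlet problem \eqref{Dirichlet problem} on $\boldsymbol D$ with boundary data equal to the constant $1$ on $\{\Gamma_1\}$ and the constant $0$ on $\{\Gamma_2\}$ --- data which is trivially of class $C^{2,\alpha}$ --- and since $\boldsymbol D\in C^{2,\alpha}$, Kellogg's theorem (see \cite{Kellogg}) yields $\omega=\lambda_1\omega_1\in C^{2,\alpha}(\overline{\boldsymbol D})$. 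Consequently $p=\omega_\xi-i\omega_\eta$ is holomorphic in $\boldsymbol D$ by the Cauchy--Riemann equations and $p\in C^{1,\alpha}(\overline{\boldsymbol D})$. The real part of $q=\int p$ is $\omega$, hence single-valued; the only obstruction to $q$ being single-valued lies in the periods of $\operatorname{Im}q$ over the two homology generators of $\boldsymbol D$, and the period over a cycle homotopic to $\Gamma_1$ equals $\int_{\Gamma_1}\frac{\partial\omega}{\partial\boldsymbol n}\,ds=\lambda_1\alpha_1=2\pi$ by the choice $\lambda_1=2\pi/\alpha_1$ (the period over a cycle homotopic to $\Gamma_2$ is the negative of this, since $\oint_{\partial\boldsymbol D}\frac{\partial\omega}{\partial\boldsymbol n}\,ds=0$). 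Therefore $G=e^{q}$ is single-valued and holomorphic in $\boldsymbol D$; since $\omega$ extends continuously to $\overline{\boldsymbol D}$ and $p$ does as well, $G$ extends continuously to $\overline{\boldsymbol D}$, and bootstrapping in the identity $G'=Gp$ upgrades this to $G\in C^{2,\alpha}(\overline{\boldsymbol D})$, which is part~i.

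For ii and iii I would follow \cite[Chapter~6]{Ahlfors}: $|G|=e^{\omega}$, so by the strong maximum principle $1<|G|<e^{\lambda_1}$ throughout $\boldsymbol D$, while $|G|=1$ on $\Gamma_2$ and $|G|=e^{\lambda_1}$ on $\Gamma_1$; hence $G(\boldsymbol D)\subset\boldsymbol A_{1;e^{\lambda_1}}$ and $G$ maps $\{\Gamma_2\}$, $\{\Gamma_1\}$ into $C_1$, $C_{e^{\lambda_1}}$ respectively. Since $\arg G$ increases by exactly the conjugate period $2\pi$ along each boundary curve, $G$ restricts to a homeomorphism of each boundary curve onto the corresponding circle; the argument principle then shows that every point of $\boldsymbol A_{1;e^{\lambda_1}}$ has, counted with multiplicity, exactly one $G$-preimage, so $G$ is injective, $G'\neq0$ in $\boldsymbol D$, and $G:\overline{\boldsymbol D}\to\overline{\boldsymbol A_{1;e^{\lambda_1}}}$ is a homeomorphism which is a conformal representation of $\boldsymbol D$ on $\boldsymbol A_{1;e^{\lambda_1}}$.

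For iv and v the key additional point is that $G'=Gp$ is nonzero on all of $\overline{\boldsymbol D}$: on $\partial\boldsymbol D$ the function $\omega$ is locally constant, so $\nabla\omega=\frac{\partial\omega}{\partial\boldsymbol n}\boldsymbol n$, and $\frac{\partial\omega}{\partial\boldsymbol n}\neq0$ there by Hopf's lemma (applicable since $\omega$, being nonconstant, attains its maximum on $\Gamma_1$ and its minimum on $\Gamma_2$); hence $|p|=|\nabla\omega|\neq0$ on $\partial\boldsymbol D$ and $G'\neq0$ on $\overline{\boldsymbol D}$. Thus $F=G^{-1}$ is a homeomorphism of $\overline{\boldsymbol A_{1;e^{\lambda_1}}}$ onto $\overline{\boldsymbol D}$, holomorphic in $\boldsymbol A_{1;e^{\lambda_1}}$ with $F'=1/(G'\circ F)$, and since $G'\in C^{1,\alpha}(\overline{\boldsymbol D})$ is nonvanishing and $F$ is continuous, the right-hand side is a continuous function $\Phi$ on $\overline{\boldsymbol A_{1;e^{\lambda_1}}}$. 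To see that the complex difference quotient converges at a boundary point $z^*\in\partial\boldsymbol A_{1;e^{\lambda_1}}$, join $z^*$ to a nearby $z\in\overline{\boldsymbol A_{1;e^{\lambda_1}}}$ by a radial-then-circular path $\gamma\subset\overline{\boldsymbol A_{1;e^{\lambda_1}}}$ of length at most $C(r_1,r_2)\,|z^*-z|$ (exactly as in Lemma \ref{theta-z lemma}); then $F(z^*)-F(z)=\int_\gamma F'$ gives $|\tfrac{F(z^*)-F(z)}{z^*-z}-\Phi(z^*)|\le \tfrac{1}{|z^*-z|}\int_\gamma|F'(\zeta)-\Phi(z^*)|\,|d\zeta|\le C\sup_{\zeta\in\gamma}|F'(\zeta)-\Phi(z^*)|\to0$ as $z\to z^*$, so $F'(z^*)=\Phi(z^*)$ exists and $F'=\Phi$ extends continuously, which is iv. Since $G$ is then a $C^{2,\alpha}$ diffeomorphism of $\overline{\boldsymbol D}$ onto $\overline{\boldsymbol A_{1;e^{\lambda_1}}}$ with nonsingular Jacobian, the inverse function theorem in the $C^{2,\alpha}$ category (equivalently, one more round of bootstrapping in $F'=1/(G'\circ F)$) gives $F\in C^{2,\alpha}(\overline{\boldsymbol A_{1;e^{\lambda_1}}})$; in particular $F''$ extends continuously, and the same path estimate applied with $F'$ in place of $F$ and $F''$ in place of $F'$ yields v.

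The main obstacle is the combination of the boundary nonvanishing of $G'$ with the H\"older-regularity input for $\omega$; once those are secured, iv and v are soft consequences of the inverse function theorem and the quasiconvexity of the annulus, while i--iii are entirely classical and already contained in \cite[Chapter~6]{Ahlfors}.
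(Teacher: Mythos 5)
Your proof is correct, and on the two genuinely delicate points it takes a different, and in fact more economical, route than the paper. For the non-vanishing of $G'$ on $\partial \boldsymbol D$ --- the crux of iv --- the paper argues by contradiction: it transplants the problem to the unit disk via a Riemann map $T$, extends $J=G\circ T$ across $\partial\mathbb{U}$ by the reflection principle, and derives a contradiction from the argument principle. You instead note that $G'=pG$ with $|G|=e^{\omega}\ge 1$ and $|p|=|\nabla\omega|$, that $\nabla\omega$ is purely normal on each $\Gamma_i$ because $\omega$ is constant there, and that the normal derivative is non-zero by Hopf's boundary point lemma (applicable because $\omega$ attains its maximum on $\Gamma_1$ and its minimum on $\Gamma_2$, and a $C^{2,\alpha}$ boundary satisfies the interior ball condition, while Kellogg's theorem guarantees the classical normal derivative exists and equals the Hopf lower bound). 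This is shorter and immediately yields the observation, stated after the lemma in the paper, that $\nabla\omega\neq 0$ on $\overline{\boldsymbol D}$, which the paper instead deduces from $G'\neq 0$. For v, the paper computes the second difference quotient of $F$ directly and must choose intermediate points $w_0^*$ carefully to cope with the non-convexity of $\boldsymbol D$ near $\partial\boldsymbol D$; you push all the geometric work onto the annulus, which is quasiconvex, by bootstrapping $F'=1/(G'\circ F)$ and integrating along short radial-then-circular paths, arriving at the same formula $F''=-\left(p'\circ F+p^{2}\circ F\right)z\left(F'\right)^{3}$ as the paper. The one step worth making explicit is the identity $F(z^{*})-F(z)=\int_{\gamma}F'$ when part of $\gamma$ lies on a boundary circle: it follows by applying the fundamental theorem of calculus on slightly displaced concentric paths in the open annulus and passing to the limit using the uniform continuity of $F$ and of the continuous extension of $F'$; the same remark applies to the analogous identity for $F'$ and $F''$ in part v. Parts i--iii are handled identically in both arguments, by appeal to the construction in Ahlfors.
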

	\begin{proof}
		For the proof of $i. - iii.$ see \cite[Chapter 6, Theorem 10]{Ahlfors}. For point $iv.$ notice first that the assumption $\partial \boldsymbol D \in C^{2;\alpha}$ implies (using \textit{Kellogg}'s theorem) that $\nabla \omega$ can be continuously extended to $\overline{\boldsymbol D}$. Consequently $G$ extends continuously to $\overline{\boldsymbol D}$. Using this aspect, the compactness of $\overline{\boldsymbol D}$, as well as points $i.$ and $ii.$ it is easy to notice that $F$ can be continuously extended to $\overline{\boldsymbol A_{1;e^{\lambda_1}}}$. The next step is to evaluate the limit $\lim\limits_{w \rightarrow w^*} \frac{G(w^*) - G(w)}{w^* - w}$ when $w^* \in \partial \boldsymbol D$ and $w \in \overline{\boldsymbol D}$. To this end it is helpful to notice that one may assume without loss of generality that the points $w_0, w, w^*$ belong to a single rectifiable curve as $w$ approaches $w^*$. With this observation in mind it is quite easy to see that $\lim\limits_{w \rightarrow w^*} \frac{G(w^*) - G(w)}{w^* - w} = p(w^*)G(w^*),~ \forall w^* \in \boldsymbol D$. Hence one can continuously extend the derivative of $G$ to $\overline{\boldsymbol D}$ by setting $G'(w) = p(w)G(w)$ if $w \in \partial \boldsymbol D$. Then $\lim\limits_{z \rightarrow z^*} \frac{F(z^*) - F(z)}{z^* - z} = \lim\limits_{w \rightarrow F(z^*)} \frac{1}{\frac{G(F(z^*)) - G(w)}{F(z^*) - w}} = \frac{1}{G'(F(z^*))}, ~z^* \in \partial \boldsymbol A_{1;e^{\lambda_1}}$. In order to conclude, it only remains to prove that $G'_{|\partial \boldsymbol D}$ does not vanish at any point. Suppose by contradiction that there is a point $w^* \in \partial \boldsymbol D$ such that $G'(w^*) = 0$, and one may assume without loss of generality that this point belongs to the exterior contour (for the case when $w^*$ belongs to the inner contour, the reasoning is similar, with the only difference that the conformal mapping $T$ defined right below will be considered from the exterior of $\boldsymbol \Omega_i$ to the interior of the unit disk). Define $\boldsymbol \Omega_i$ to be the region bounded by the image of $\Gamma_1$ and let $\boldsymbol \Omega$ be the region bounded by the image of $\Gamma_2$. Since $\boldsymbol \Omega \neq \mathbb{C}$ is a simply connected region, by \textit{Riemann Mapping} theorem there exists a (unique) conformal transformation $T$ of $\mathbb{U}$ onto $\boldsymbol \Omega$ such that $T(0) = w_i, \ T'(0) > 0$ for some $w_i \in \boldsymbol \Omega_i$ (see Figure \ref{fig: Lemma3_iv}). Letting $\boldsymbol V_i = T^{-1}(\boldsymbol \Omega_i)$ it follows by the \textit{Reflection Principle} that the map $J: \mathbb{U}\setminus \overline{\boldsymbol V_i} \rightarrow \boldsymbol A_{1,e^{\lambda_1}}, \ J = G \circ T$ can be analytically extended to $(\mathbb{U}\setminus \overline{\boldsymbol V_i}) \cup B(\lambda,2\epsilon)$ at any point $\lambda \in \partial \mathbb{U}$ (where $\epsilon$ may depend on $\lambda$) and in addition when restricting $J$ to $B(\lambda,2\epsilon)$, the disk of radius $2\epsilon$ centered in $\lambda$, the only points in $B(\lambda,2\epsilon)$ which are mapped on $\partial \boldsymbol A_{1,e^{\lambda_1}}$ are those which lie on $\partial \mathbb{U}$ and the correspondence is one-to-one. But defining $\lambda^* = T^{-1}(w^*)$, the assumption $G'(w^*)=0$ implies $J'(\lambda^*)=0$ (this is true since $T'$ can be continuously extended to $\partial \mathbb{U}$ according to \cite[Theorem 3.6]{Pommerenke} and is thus bounded on $\overline{\mathbb{U}}$). Restricting $J$ to $B(\lambda^*,2\epsilon)$ an application of the \textit{Argument Principle} shows that for any $z \in J(B(\lambda^*,\epsilon))$ the equation $z-J(\lambda)=0$ has either at least two solutions in $B(\lambda^*,\epsilon)$, or $\lambda$ is a solution of order at least two in which case $J'(\lambda)=0$. But if $z$ also lies on $\partial \boldsymbol A_{1,e^{\lambda_1}}$ then the earlier discussion shows that there is a unique $\lambda \in B(\lambda^*,\epsilon)$ for which $J(\lambda) = z$ and furthermore $\lambda$ also belongs to $\partial \mathbb{U}$. Consequently it must be the case that $J'(\lambda)=0$, and since this is true for any $\lambda \in \partial \mathbb{U} \cap B(\lambda^*,\epsilon)$ it follows that $J$ must be a constant function. This is obviously a contradiction and so $G(w^*) \neq 0$. Finally putting $F'(z) = \frac{1}{G'(F(z))}$ whenever $z \in \boldsymbol A_{1,e^{\lambda_1}}$ and noticing that $G'(w) = p(w)G(w) ~\forall w \in \boldsymbol D$ the proof of $iii.$ is complete.
		
		\begin{figure}	
			\includegraphics[width=\linewidth]{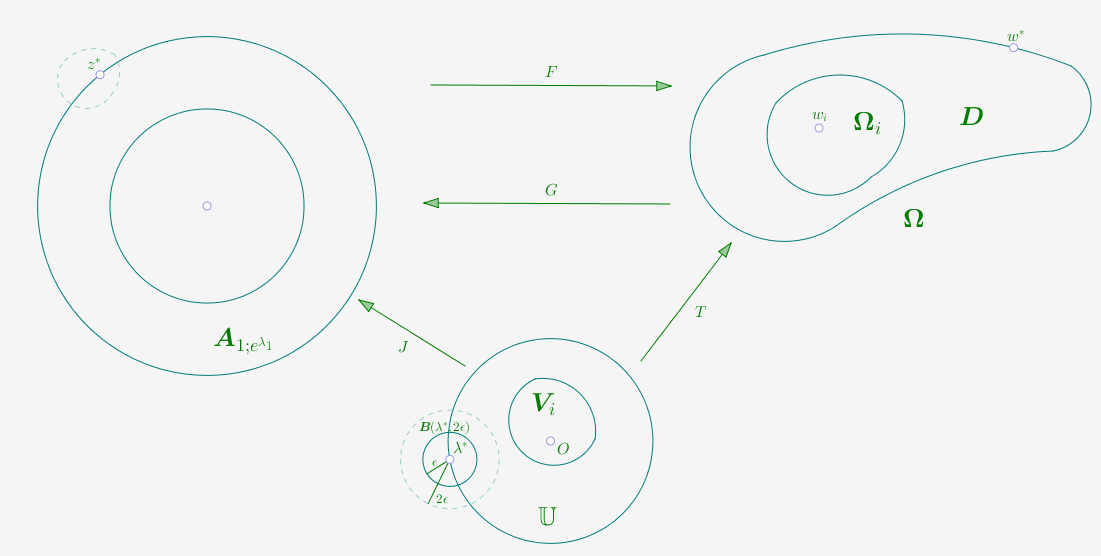}
			\caption{The conformal maps of point $iv.$ of Lemma \ref{Auxiliary lemma for Theorems 5}.}
			\label{fig: Lemma3_iv}	
		\end{figure}
		
		For the last point fix any arbitrary $z^* \in \partial \boldsymbol A_{1;e^{\lambda_1}}$ and denoting $w = F(z)$, $w^*=F(z^*)$, $\lim\limits_{z \rightarrow z^*,~ z \in \overline{\boldsymbol A_{1;e^{\lambda_1}}}} \frac{F'(z) - F'(z^*)}{z - z^*} = \lim\limits_{w \rightarrow w^*,~ w \in \overline{\boldsymbol D}} \left[ \frac{1}{G'(w)} \frac{G'(w^*) - G'(w)}{G(w)-G(w^*)} \right]\frac{1}{G'(w^*)}$ \\
		$= -\frac{1}{G'(w^*)}\lim\limits_{w \rightarrow w^*,~ w \in \overline{\boldsymbol D}} \left[ \frac{1}{G'(w)} \frac{G'(w^*) - G'(w)}{w^* - w}\frac{1}{\frac{G(w)-G(w^*)}{w - w^*}} \right]$. Also we notice that \\ $\lim\limits_{w \rightarrow w^*,~ w \in \overline{\boldsymbol D}} \frac{G'(w^*) - G'(w)}{w^* - w} = \lim\limits_{w \rightarrow w^*,~ w \in \overline{\boldsymbol D}} \left[ p(w^*)\frac{G(w^*) - G(w)}{w^* - w} + G(w)\frac{p(w^*) - p(w)}{w^* - w} \right]$. \\ 
		Let $B_{w^*}$ be a simply connected, relatively open (with respect to $\overline{\boldsymbol D}$) neighborhood of $w^*$, and let $w_0^*$ be some point in $B_{w^*}$ which will be chosen later on. It is easy to see that the function $\int\limits_{w_0^*}^w p'(\lambda) d\lambda + p(w_0^*)$ is well-defined and coincides with $p$, on $B_{w^*} \setminus \partial \boldsymbol  D$. In addition, since $p'$ extends continuously to $\partial \overline{\boldsymbol D}$ (use the \textit{Cauchy-Riemann} equations as well as \textit{Kellogg}'s theorem for $\omega_1$), it follows that $\int\limits_{w_0^*}^w p'(\lambda) d\lambda + p(w_0^*)$ can actually be extended by continuity to $B_{w^*}$. It is claimed that one can always choose the point $w_0^* \neq w$ in such way that the line segments with edges $(w_0^*,w),~ (w_0^*,w^*)$ are in $B_{w^*}$ and furthermore the ratio $\left|\frac{w_0^* - w}{w^*-w}\right|$ stays bounded as $w \rightarrow w^*$. Indeed since $\boldsymbol D \in C^{m,\alpha}$, $m \ge 2$, letting $t^* = \Re(w^*)$ there is a real-valued function $h$ defined on some open interval $\textbf{I}$ containing $t^*$ such that $h \in C^{m,\alpha}(\textbf{I})$ and, eventually after performing an appropriate rotation, the boundary of $\boldsymbol D$ around $w^*$ coincides with the graphic of $h$. Assume without loss of generality that $B_{w^*}\cap \boldsymbol D$ is below the graphic of $h$. If $h''(t^*) < 0$ then $h$ is locally (strictly) concave around $t^*$  and so one can choose $w_0^*$ to be the middle of the segment with edges $w$ and $w^*$. If $h''(t^*) > 0$ then $h$ is locally strictly convex around $t^*$ and one can choose $w_0^*$ to be the projection of $w$ on the tangent to the graphic of $h$ in $t^*$. Last but not least if $h''(t^*) = 0$ then $h$ is convex at the left-hand side of $t^*$ and concave at the right-hand side of $t^*$, or vice-versa, and we choose $w_0^*$ as above according to whether it lies on the convex or the concave side of the graphic (see Figure \ref{fig: Lemma3_v}), with the amendment that whenever the line segment with edges $(w,w^*)$ is included in $\overline{\boldsymbol D}$ the point $w_0^*$ can be chosen the middle of the segment. Having proved the claim we can go back and thus obtain $\lim\limits_{w \rightarrow w^*, ~w \in \overline{\boldsymbol D}} \frac{p(w^*) - p(w)}{w^* - w} = \lim\limits_{w \rightarrow w^*, ~w \in \overline{\boldsymbol D}} \frac{\left(p(w^*) - p(w_0^*)\right) + \left(p(w_0^*) - p(w) \right)}{w^* - w} = $ \\ $\lim\limits_{w \rightarrow w^*, ~w \in \overline{\boldsymbol D}} \left[ \frac{p(w^*) - p(w_0^*)}{w^* - w_0^*} + \left(\frac{p(w_0^*) - p(w)}{w_0^* - w} - \frac{p(w^*) - p(w_0^*)}{w^* - w_0^*}\right) \frac{w_0^* - w}{w^*-w}  \right]$, where in the last expression, using the integral representation for $p$ as well as the \textit{Dominant Convergence} theorem, the first term approaches $\frac{\partial^2 \omega}{\partial \xi^2}(w^*) - i\frac{\partial^2 \omega}{\partial\xi \partial\eta}(w^*)$ and the second term approaches $0$. In conclusion $\lim\limits_{w \rightarrow w^*, ~w \in \overline{\boldsymbol D}} \frac{p(w^*) - p(w)}{w^* - w} = \frac{\partial^2 \omega}{\partial \xi^2}(w^*) - i\frac{\partial^2 \omega}{\partial\xi \partial\eta}(w^*) =: p'(w^*)$. Returning, we observe that
		$\lim\limits_{z \rightarrow z^*,~ z \in \overline{\boldsymbol A_{1;e^{\lambda_1}}}} \frac{F'(z) - F'(z^*)}{z - z^*} =$ \\
		$-\frac{1}{G'(w^*)}\lim\limits_{w \rightarrow w^*,~ w \in \overline{D}} \left[ \frac{1}{G'(w)} \frac{G'(w^*) - G'(w)}{w^* - w}\frac{1}{\frac{G(w)-G(w^*)}{w - w^*}} \right] = \left[p^2(F(z^*)) + p'(F(z^*))\right]\cdot$ \\
		$z^*\left(-F'(z^*)\right)^3 =: F''(z^*)$. This expression obviously holds for all points $z^* \in \boldsymbol A_{1;e^{\lambda_1}}$ as well, and it is thus seen that $F''$ can be continuously extended to $\overline{\boldsymbol A_{1;e^{\lambda_1}}}$. This ends the proof of the lemma. 
	\end{proof} 
	
	\begin{figure}[h!]	
		\includegraphics[width=\linewidth]{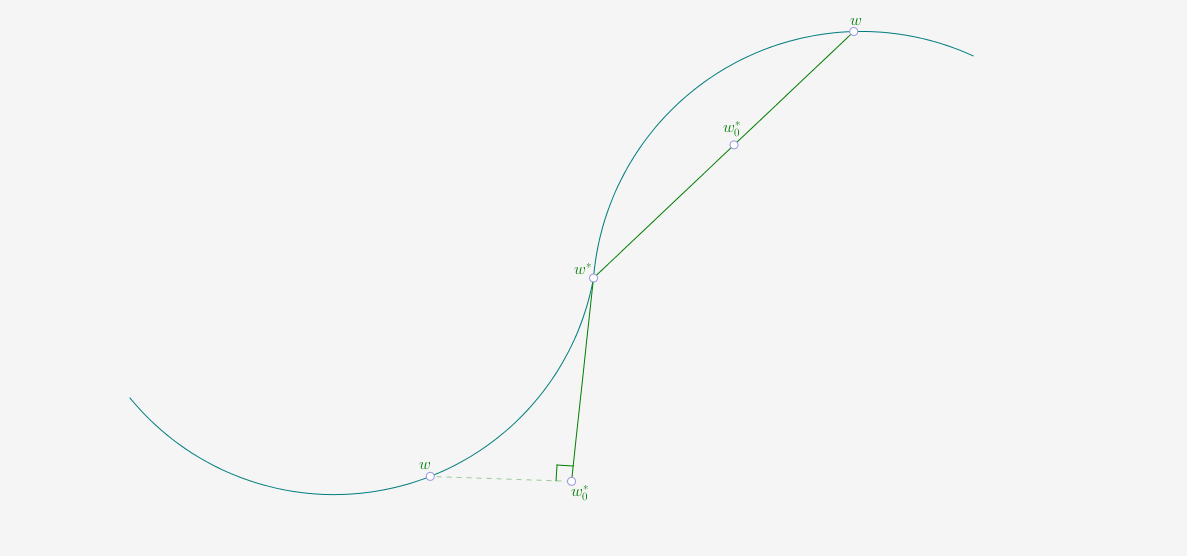}
		\caption{Smooth boundary property.}
		\label{fig: Lemma3_v}	
	\end{figure}
	
	\begin{remark}
		The lemma above can be generalized by exactly the same arguments to the case when $\boldsymbol D \in C^{m,\alpha}$ for some positive integer $m \ge 2$ and some $\alpha \in (0,1)$, in which case the higher derivatives of $F$ up to order $m$ can be defined for the boundary points of $\boldsymbol A_{1,e^{\lambda_1}}$ as well, and in addition they extend continuously to $\overline{\boldsymbol A_{1,e^{\lambda_1}}}$.
	\end{remark}
	
	\begin{theorem}\label{smooth exension proposition doubly-connected}
		Let $\boldsymbol D \in C^{m+1,\alpha}$ for some positive integer $m \ge 2$ and some $\alpha \in (0,1)$, and in addition assume $\Phi \in C^{m,\alpha}(\partial \boldsymbol D)$ satisfies the compatibility condition $\int\limits_{\partial \boldsymbol D} \Phi ~d\sigma = 0$. If $U$ is a solution of the Neumann problem \eqref{Neumann problem} with boundary data $\Phi$ then $U$ and all its partial derivatives up to order $m+1$ are uniformly H{\"o}lder continuous with exponent $\alpha$ on $\boldsymbol D$.
	\end{theorem}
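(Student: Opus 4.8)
The plan is to transport the whole problem to the annulus $\boldsymbol A:=\boldsymbol A_{1;e^{\lambda_1}}$ through the conformal representation $G$ furnished by Lemma \ref{Auxiliary lemma for Theorems 5}, apply Theorem \ref{main result cartesian} there, and carry the resulting regularity back. Put $F=G^{-1}$ and $\tilde U:=U\circ F$ on $\boldsymbol A$; since $F$ is conformal, $\tilde U$ is harmonic on $\boldsymbol A$. The first thing to settle is the transformation law for the Neumann datum. By Lemma \ref{Auxiliary lemma for Theorems 5} (and the Remark following it) $F$ is a $C^1$-diffeomorphism of $\overline{\boldsymbol A}$ onto $\overline{\boldsymbol D}$ with nowhere-vanishing derivative; being conformal it preserves angles and maps interior to interior, so its real Jacobian $DF(\zeta)$ sends the outward unit normal $\boldsymbol\nu_{\boldsymbol A}(\zeta)$ at $\zeta\in\partial\boldsymbol A$ to $|F'(\zeta)|$ times the outward unit normal $\boldsymbol\nu_{\boldsymbol D}(F(\zeta))$ at $F(\zeta)\in\partial\boldsymbol D$. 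Hence, for $\zeta\in\partial\boldsymbol A$,
\[
\frac{\partial\tilde U}{\partial\boldsymbol\nu_{\boldsymbol A}}(\zeta)=\big\langle\nabla U(F(\zeta)),\,DF(\zeta)\boldsymbol\nu_{\boldsymbol A}(\zeta)\big\rangle=|F'(\zeta)|\,\frac{\partial U}{\partial\boldsymbol\nu_{\boldsymbol D}}(F(\zeta))=|F'(\zeta)|\,\Phi(F(\zeta))=:\tilde\Phi(\zeta).
\]
Since $|F'|$ is precisely the arc-length stretching factor of $F|_{\partial\boldsymbol A}$, the change of variables formula gives $\int_{\partial\boldsymbol A}\tilde\Phi\,d\sigma=\int_{\partial\boldsymbol D}\Phi\,d\sigma=0$, so $\tilde\Phi$ meets the compatibility hypothesis of Theorem \ref{main result cartesian}.

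The second step is to record the regularity of $\tilde\Phi$. Since $\boldsymbol D\in C^{m+1,\alpha}$, Kellogg's theorem applied to the harmonic measure $\omega_1$ (whose boundary data is locally constant, hence smooth) yields $\omega_1\in C^{m+1,\alpha}(\overline{\boldsymbol D})$; then, exactly as in the proof of Lemma \ref{Auxiliary lemma for Theorems 5} (where $G'=pG$ with $p=\omega_\xi-i\omega_\eta$, and inductively $G^{(k)}$ is a polynomial in $p,p',\dots,p^{(k-1)},G$), one gets $G\in C^{m+1,\alpha}(\overline{\boldsymbol D})$ and, inverting, $F\in C^{m+1,\alpha}(\overline{\boldsymbol A})$; in particular $F'\in C^{m,\alpha}(\overline{\boldsymbol A})$ and $F'$ does not vanish. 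Therefore $|F'|\in C^{m,\alpha}(\partial\boldsymbol A)$, $F|_{\partial\boldsymbol A}\in C^{m,\alpha}$, and $\Phi\in C^{m,\alpha}(\partial\boldsymbol D)$ combine to give $\tilde\Phi=|F'|\,(\Phi\circ F)\in C^{m,\alpha}(\partial\boldsymbol A)$. Replacing $\tilde U$ by $\tilde U-\tilde U(e^{\lambda_1/2})$ if necessary (which changes neither $\nabla\tilde U$ nor the Neumann datum, and since any two Neumann solutions differ by a constant on the connected domain $\boldsymbol A$), Theorem \ref{main result cartesian} applies and gives that $\tilde U$ together with all its partial derivatives up to order $m+1$ extends continuously to $\overline{\boldsymbol A}$ with extensions uniformly Hölder continuous of exponent $\alpha$; that is, $\tilde U\in C^{m+1,\alpha}(\overline{\boldsymbol A})$.

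The third step is to push this back: $U=\tilde U\circ G$ on $\boldsymbol D$ with $\tilde U\in C^{m+1,\alpha}(\overline{\boldsymbol A})$ and $G\in C^{m+1,\alpha}(\overline{\boldsymbol D})$. By the multivariate chain rule, every partial derivative of $U$ of order $\le m+1$ is a finite sum of products, each factor being either a partial derivative of $\tilde U$ of order $\le m+1$ evaluated at $G$, or a partial derivative of $G$ of order $\le m+1$; all of these factors are bounded and $\alpha$-Hölder continuous on the compact set $\overline{\boldsymbol D}$, hence so are the products and their sum. It follows that $U$ and all its partial derivatives up to order $m+1$ extend continuously to $\overline{\boldsymbol D}$ and are uniformly Hölder continuous there with exponent $\alpha$, which is the assertion.

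The part I expect to require genuine care is the first one: rigorously justifying, up to the boundary, that under $F$ outward normals go to outward normals with conformal factor $|F'|$ and that $|F'|$ is the arc-length Jacobian of $F|_{\partial\boldsymbol A}$ — this is where the boundary smoothness of $F$ from Lemma \ref{Auxiliary lemma for Theorems 5} is indispensable. The Hölder bookkeeping is then routine once one observes that the hypothesis $\boldsymbol D\in C^{m+1,\alpha}$, one order above $\Phi\in C^{m,\alpha}$, is exactly what compensates for the derivative lost in passing from $F$ to $F'$ inside $\tilde\Phi$, and the final composition step uses nothing beyond the chain rule and compactness of $\overline{\boldsymbol D}$.
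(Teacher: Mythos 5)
Your proof is correct and follows essentially the same route as the paper's: conformal transplantation to the annulus $\boldsymbol A_{1;e^{\lambda_1}}$ via the map of Lemma \ref{Auxiliary lemma for Theorems 5}, transformation of the Neumann datum into $|F'|\,(\Phi\circ F)\in C^{m,\alpha}(\partial\boldsymbol A)$ using Kellogg's theorem for $\omega$, application of Theorem \ref{main result cartesian} on the annulus, and pull-back through $G$. The only cosmetic difference is that the paper constructs the annular Neumann solution $V$ independently and shows $V\circ G - U$ is constant (and writes out the chain-rule Hölder estimates for each partial derivative explicitly), whereas you transplant $U$ directly as $U\circ F$ and package the final step as a generic chain-rule/compactness argument.
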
 
	
	Before proceeding with the proof of the theorem notice that due to the condition $\boldsymbol D \in C^{m+1,\alpha}$, $m \ge 2$, \textit{Kellogg}'s theorem guarantees that $\nabla \omega$ can be continuously extended to $\overline{\boldsymbol D}$, in which case its continuous extension is also denoted by $\nabla \omega$. Furthermore, as seen in the proof of Lemma \ref{Auxiliary lemma for Theorems 5}, $G' \neq 0$ on $\overline{\boldsymbol D}$ and since $G'=pG$, it follows that $\nabla \omega \neq 0$ on $\overline{\boldsymbol D}$. 
	
	\begin{proof}
		Set $r_1 = 1, ~r_2 = e^{\lambda_1}$ and let $F: \boldsymbol A_{r_{1};r_{2}}\rightarrow \boldsymbol D$ be the conformal map given in Lemma \ref{Auxiliary lemma for Theorems 5}. Without loss of generality assume $U(F(\sqrt{r_1r_2}))=0$. Since $\boldsymbol A_{r_1;r_2} \in C^{\infty}$ it can be also assumed, without restricting the generality, that $\Gamma_1, ~\Gamma_2 \in C^{2,\alpha}$. Defining $f: \partial \boldsymbol A_{r_1;r_2} \rightarrow \mathbb{R}, ~ f = \left(\Phi \circ F\right) |F'|$, we claim that $f \in C^{m,\alpha}(\partial \boldsymbol A_{r_1;r_2})$. In an attempt to keep the proof as clear as possible the author will prove the claim only for the case $m=2$; the case of a general $m \ge 2$ follows by induction in the same spirit as for this simpler case. To begin with notice that point $iv.$ of Lemma \ref{Auxiliary lemma for Theorems 5} shows that $F$ is locally Lipschitz continuous on $\boldsymbol A_{r_1,r_2}$ (use an integral representation of $F$ in terms of $F'$ in some convex neighborhood of any point of $\overline{\boldsymbol A_{r_1,r_2}}$). Secondly, point $v.$ of Lemma \ref{Auxiliary lemma for Theorems 5} reveals that $F'$ is locally Lipschitz continuous on $\overline{\boldsymbol A_{r_1,r_2}}$ (use again an integral representation of $F'$ in terms of $F''$ in some convex neighborhood of any point of $\overline{\boldsymbol A_{r_1,r_2}}$) and hence locally $\alpha$ H{\"o}lder continuous there. Using the compactness of $\overline{\boldsymbol A_{r_1,r_2}}$ we can argue that $F'$ is in fact uniformly H{\"o}lder continuous with exponent $\alpha$ on $\boldsymbol A_{r_1,r_2}$. For $F''$ choose any two points $z_1, ~z_2 \in \boldsymbol A_{r_1,r_2}$ and notice that $|F''(z_2)-F''(z_1)| = |z_2\left(p'(F(z_2))+ p^2(F(z_2))\right)\left(F'(z_2)\right)^3 - z_1\left(p'(F(z_1))+ p^2(F(z_1))\right)\left(F'(z_1)\right)^3| \le \left(\|p'\circ F\| + \|p^2 \circ F\| \right)\|(F')^3\||z_2 - z_1|+r_2|F'(z_2) - F'(z_1)|\left( 4\|(F')^2\| + \|p'\circ F\| + \|p^2 \circ F\|\right) + r_2(|p'(F(z_2))- p'(F(z_1))| + 2|p(F(z_2)) - p(F(z_1))|\|p \circ F\|)\|(F')^3\|$. Recall that $p = \omega_{\boldsymbol \xi} - i\omega_{\boldsymbol \eta}$ and by \textit{Kellogg}'s theorem combined with the compactness of $\overline{\boldsymbol D}$ the $m=2$ order partial derivatives of $\omega$ are uniformly H{\"o}lder continuous with exponent $\alpha$ on $\boldsymbol D$. Thus using the last inequality and the locally Lipschitz continuity of $F$ on $\overline{\boldsymbol D}$ it follows that $F''$ is locally $\alpha$ Holder continuous on $\overline{\boldsymbol A_{r_1,r_2}}$ and hence uniformly H{\"o}lder continuous with exponent $\alpha$ there. Finally $F'''(z) = -(F'(z))^2 [(p'(F(z))+p^2(F(z))F'(z) + z(p''(F(z))+2p(F(z)))(F'(z))^2 - 3z(p'(F(z))+p^2(F(z)))F''(z)]$, and using the locally Lipschitz property for $F$, the uniform H{\"o}lder continuity with exponent $\alpha$ of $F'$ and $F''$ as well as the uniform H{\"o}lder continuity with exponent $\alpha$ of the partial derivatives of $\omega$ up to order $m=2$, it can be deduced after several applications of the triangle inequality that $F'''$ is locally $\alpha$ H{\"o}lder continuous on $\overline{\boldsymbol A_{r_1,r_2}}$ and so uniform H{\"o}lder continuous with exponent $\alpha$ there. To complete the proof of the claim notice that since $0 \not\in \overline{\boldsymbol A_{r_1,r_2}}$ it follows that $|\cdot| \in C^{\infty}(\boldsymbol A_{r_1,r_2})$ and so we have $\frac{d^2}{dt^2}f(\Gamma_i(t)) = \frac{d^2}{dt^2}\Phi(F(\Gamma_i(t))) \cdot |F'(\Gamma_i(t))| + 2\frac{d}{dt}\Phi(F(\Gamma_i(t))) \cdot \frac{d}{dt}|F'(\Gamma_i(t))| + \Phi(F(\Gamma_i(t))) \cdot \frac{d^2}{dt^2}|F'(\Gamma_i(t))|$. But $\frac{d^2}{dt^2}F(\Gamma_i(t)) = F''(\Gamma_i(t))\left(\Gamma_i'(t)\right)^2 + F'(\Gamma_i(t))\Gamma_i''(t)$ which is readily seen to be locally $\alpha$ H{\"o}lder continuous. Also $\frac{d}{dt}|F'(\Gamma_i(t))| = |F'(\Gamma_i(t))| \Re\left(F''(\Gamma_i(t))\Gamma_i'(t)G(\Gamma_i(t))\right)$ and thus $\frac{d^2}{dt^2}|F'(\Gamma_i(t))| = \Re\left(\frac{(F'''(\Gamma_i(t))(\Gamma_i'(t))^2 + F''(\Gamma_i(t))\Gamma_i''(t))F'(\Gamma_i(t)) - (F''(\Gamma_i(t))\Gamma_i'(t))^2}{(F'(\Gamma_i(t)))^2} \right)\cdot$ \\ $|F'(\Gamma_i(t))|+ |F'(\Gamma_i(t))|\Re^2\left(G'(F(\Gamma_i(t)))F''(\Gamma_i(t))\Gamma_i'(t)\right)$. When putting everything together, the proof of the claim follows easily. Having proved that  $f$ belongs to $C^{m,\alpha}(\partial \boldsymbol A_{r_1,r_2})$ let $V$ be the solution of the Neumann problem \eqref{Neumann problem} on $\boldsymbol A_{r_1;r_2}$ with boundary data $f$, satisfying $V(\sqrt{r_1r_2})=0$ (using direct computations together with the assumption $\int\limits_{\partial \boldsymbol D} \Phi ~d\sigma = 0$ it is not difficult to see that $f$ satisfies the compatibility condition $\int\limits_{\partial \boldsymbol A_{r_1;r_2}} f ~ds = 0$). Then, according to Theorem \ref{main result cartesian}, the gradient of $V$ can be continuously extended to the closure of $\boldsymbol A_{r_1,r_2}$ (as before, its continuous extension will also be denoted $\nabla V$). Now set $W = V \circ G$ which shows that $W$ is harmonic in $\boldsymbol D$ and furthermore $W(F(\sqrt{r_1r_2})) = 0$. Also taking the partial derivatives of $W$ with respect to $\xi$ and $\eta$ it follows that for any $w \in \boldsymbol D$
		\begin{align*}
		\frac{\partial W}{\partial \xi}(w) &= \Re \left(\frac{\overline{\nabla V(G(w))}}{F'(G(w))}\right), \\
		\frac{\partial W}{\partial \eta}(w) &= -\Im \left(\frac{\overline{\nabla V(G(w))}}{F'(G(w))}\right).
		\end{align*}
		Hence $\nabla W = \frac{\nabla V \circ G}{\overline{F' \circ G}}$ on $\boldsymbol D$ which proves, together with point $iv.$ of Lemma \ref{Auxiliary lemma for Theorems 5}, that $\nabla W$ extends continuously to $\overline{\boldsymbol D}$. Consequently it follows by the \textit{Mean Value} theorem that $\frac{\partial W}{\partial \boldsymbol \nu}(w) = \langle \nabla W(w); \boldsymbol \nu(w) \rangle,~ w \in \partial \boldsymbol D$, where $\boldsymbol \nu$ stands for the (outward) normal derivative at $\partial \boldsymbol D$ and it is given by
		\begin{equation*}
		\boldsymbol \nu(F(z)) = \begin{cases}\frac{z F'(z)}{r_2|F'(z)|} = \frac{\nabla \omega(F(z))}{|\nabla \omega(F(z))|}~ &\text{if } |z|=r_2, \\ -\frac{z F'(z)}{r_1|F'(z)|}  = -\frac{\nabla \omega(F(z))}{|\nabla \omega(F(z))|}~ &\text{if } |z|=r_1. \end{cases}
		\end{equation*}
		Returning $\frac{\partial W}{\partial \boldsymbol \nu}(w) = \langle \nabla W(w); \boldsymbol \nu(w) \rangle = \frac{f \circ G}{|F'(G(w))|} = \Phi,~ \forall ~w \in \partial \boldsymbol D$. To sum up $W$ is harmonic in $\boldsymbol D$, has boundary data $\Phi$, satisfies $W(F(\sqrt{r_1r_2})) = 0$, and in addition $\nabla W \in C^1(\overline{\boldsymbol D})$. So $W-U = constant$. But then fixing any arbitrary points $w_1, ~w_2$ in $\boldsymbol D$ and letting $z_1 = G(w_1), ~z_2 = G(w_2)$ one obtains $|U(w_2) - U(w_1)| = |W(w_2) - W(w_1)| = |V(z_2) - V(z_1)| \le C_0|z_2-z_1|^{\alpha} = C_0|G(w_2) - G(w_1)|^{\alpha}$ for some positive constant $C_0$. Using the fact that $G'$ can be continuously extended to $\overline{\boldsymbol D}$, one concludes that $G$ is locally Lipschitz continuous on $\overline{\boldsymbol D}$ and hence $|U(w_2) - U(w_1)| \le C_0|G(w_2) - G(w_1)|^{\alpha} \lesssim |w_2 - w_1|^{\alpha}$, thus proving that $U$ is locally $\alpha$ H{\"o}lder continuous on $\overline{\boldsymbol D}$ and hence uniformly H{\"o}lder continuous with exponent $\alpha$ by means of a compactness argument. Also $U_{\boldsymbol \xi}(w) = W_{\boldsymbol \xi}(w) = \Re \left(\overline{\nabla V}(G(w))G'(w)\right)$ and so using Theorem \ref{main result cartesian} $|U_{\boldsymbol \xi}(w_2) - U_{\boldsymbol \xi}(w_1)| = |W_{\boldsymbol \xi}(w_2) - W_{\boldsymbol \xi}(w_1)| = |\Re \left(\overline{\nabla V}(G(w_2))G'(w_2)\right) - \Re\left(\overline{\nabla V}(G(w_1))G'(w_1)\right)| \le |\overline{\nabla V}(G(w_2))G'(w_2)- \overline{\nabla V}(G(w_1))G'(w_1)| \le |G(w_2)-G(w_1))|^{\alpha}C_1\|G\|\|p\| + H_1\|G\|\|\nabla V \circ G\| |w_2-w_1|^{\alpha} + |w_2-w_1|L_0\|\nabla V\circ G\|\|p\|$, where $C_1, ~H_1, ~L_0$ are positive constants. But if $w_1$ and $w_2$ are close enough then one can replace $|w_2-w_1|$ in the last term above by $|w_2-w_1|^{\alpha}$ and thus conclude that $U_{\boldsymbol \xi}$ is locally $\alpha$ H{\"o}lder continuous on $\overline{\boldsymbol D}$ and so uniformly H{\"o}lder continuous with exponent $\alpha$ there since $\boldsymbol D$ is bounded. Similarly $U_{\boldsymbol \eta}(w) = W_{\boldsymbol \eta}(w) = -\Im\left(\overline{\nabla V}(G(w))G'(w)\right)$ and thus $|U_{\boldsymbol \eta}(w_2) - U_{\boldsymbol \eta}(w_1)| = |W_{\boldsymbol \eta}(w_2) - W_{\boldsymbol \eta}(w_1)| = |\Im \left(\overline{\nabla V}(G(w_2))G'(w_2)\right) - \Im\left(\overline{\nabla V}(G(w_1))G'(w_1)\right)| \le |G'(w_2)\overline{\nabla V}(G(w_2)) - G'(w_1)$ \\
		$\overline{\nabla V}(G(w_1))| \le C_1\|p\|\|G\||G(w_2)-G(w_1))|^{\alpha} + H_1\|\nabla V \circ G\|\|G\| |w_2-w_1|^{\alpha} +L_0|w_2-w_1|\|\nabla V \circ G\|\|p\|$ and so $U_{\boldsymbol \eta}$ also turns out to be uniformly H{\"o}lder continuous with exponent $\alpha$ on $\boldsymbol D$. For the second order partial derivatives notice that $V$ harmonic implies that $\overline{\nabla V}$ is an analytic function, when considering $\nabla V$ as a complex number, and so $U_{\boldsymbol \xi \boldsymbol \xi}(w) = \Re\left(\overline{\nabla V}(G(w))G''(w) + \overline{\nabla V}'(G(w))(G'(w))^2\right)$. But then it is obtained that 
		$|U_{\boldsymbol \xi \boldsymbol \xi}(w_2) - U_{\boldsymbol \xi \boldsymbol \xi}(w_1)| = |W_{\boldsymbol \xi \boldsymbol \xi}(w_2) - W_{\boldsymbol \xi \boldsymbol \xi}(w_1)| \le |\Re \left(\overline{\nabla V}(G(w_2))G''(w_2) \right) - \Re (G''(w_1)\overline{\nabla V}(G(w_1)) )| + |\Re \left(\overline{\nabla V}'(G(w_2)) (G'(w_2))^2 \right) -$ \\ 
		$\Re \left(\overline{\nabla V}'(G(w_1))(G'(w_1))^2 \right)| \le|G''(w_2)\overline{\nabla V}(G(w_2))- G''(w_1)\overline{\nabla V}(G(w_1))|+$ \\
		$|\overline{\nabla V}'(G(w_2)(G'(w_2))^2 - \overline{\nabla V}'(G(w_1))\cdot(G'(w_1))^2| \le \|G\|\left(\|p'\|+\|p^2\|\right)\{|V_{\boldsymbol x}(G(w_2))$ \\
		$- V_{\boldsymbol x}(G(w_1))| + |V_{\boldsymbol y}(G(w_2)) - V_{\boldsymbol y}(G(w_1))|\}  + \|\nabla V \circ G\|\|G\|\{|p'(w_2) - p'(w_1)| + |p^2(w_2) - p^2(w_1)|\} + \|\nabla V \circ G\|\left(\|p'\|+\|p^2\|\right)|G(w_2)-G(w_1)| + \{|V_{\boldsymbol x \boldsymbol x}(G(w_2))-V_{\boldsymbol x \boldsymbol x}(G(w_1))| + |V_{\boldsymbol x \boldsymbol y}(G(w_2))-V_{\boldsymbol x \boldsymbol y}(G(w_1))|\}\|G'\|^2 + 2\|G'\|\|\overline{\nabla V}'\||G'(w_2) - G'(w_1)|$. Using the uniform Lipschitz continuity of $G$ and $G'$, the uniform H{\"o}lder continuity with exponent $\alpha$ of $V_{\boldsymbol x}$, $V_{\boldsymbol y}$, $V_{\boldsymbol x \boldsymbol x}$, $V_{\boldsymbol x \boldsymbol y}$, $p$, $p'$, as well as the compactness of $\overline{\boldsymbol D}$ it follows that $U_{\boldsymbol \xi \boldsymbol \xi}$ is uniformly H{\"o}lder continuous with exponent $\alpha$. Proceeding further we notice that $U_{\boldsymbol \xi \boldsymbol \eta}(w) = -\Im\left(\overline{\nabla V}(G(w))G''(w) + \overline{\nabla V}'(G(w))(G'(w))^2\right)$,  and also $U_{\boldsymbol \eta \boldsymbol \eta}(w) = -\Re \left(\overline{\nabla V}(G(w))G''(w) + \overline{\nabla V}'(G(w))(G'(w))^2 \right)$; so one gets the same upper-bounds for $|U_{\boldsymbol \eta \boldsymbol \eta}(w_2) - U_{\boldsymbol \eta \boldsymbol \eta}(w_1)|$ and $|U_{\boldsymbol \xi \boldsymbol \eta}(w_2) - U_{\boldsymbol \xi \boldsymbol \eta}(w_1)|$, respectively. In conclusion $U_{\boldsymbol \xi \boldsymbol \eta}$ and $U_{\boldsymbol \eta \boldsymbol \eta}$ are uniformly H{\"o}lder continuous with exponent $\alpha$ on $\boldsymbol D$ as well. For the third order partial derivatives of $U$ notice that applying the \textit{Cauchy-Riemann} equations for the expressions of $U_{\boldsymbol \xi \boldsymbol \xi}$ and $U_{\boldsymbol \eta \boldsymbol \eta}$  we obtain that $U_{\boldsymbol \xi \boldsymbol \xi \boldsymbol \xi}(w) = \Re(\mathcal{F}(w))$, $U_{\boldsymbol \xi \boldsymbol \xi \boldsymbol \eta}(w) = U_{\boldsymbol \xi \boldsymbol \eta \boldsymbol \xi}(w) = U_{\boldsymbol \eta \boldsymbol \xi \boldsymbol \xi}(w) = -\Im(\mathcal{F}(w))$, $U_{\boldsymbol \xi \boldsymbol \eta \boldsymbol \eta}(w) = U_{\boldsymbol \eta \boldsymbol \xi \boldsymbol \eta}(w) = U_{\boldsymbol \eta \boldsymbol \eta \boldsymbol \xi}(w) = -\Re(\mathcal{F}(w))$, and finally $U_{\boldsymbol \eta \boldsymbol \eta \boldsymbol \eta}(w) = \Im(\mathcal{F}(w))$, where $\mathcal{F}(w) = \overline{\nabla V}'(G(w))G'(w)G''(w) + \overline{\nabla V}(G(w))G'''(w) + \overline{\nabla V}''(G(w))(G'(w))^3$ \\
		$+ 2\overline{\nabla V}'(G(w))G'(w)G''(w)$ is an analytic function. On the other hand $G'(w)=p(w)G(w)$, $G''(w)=(p'(w)+p^2(w))G(w)$, $G'''(w)=(p''(w)+3p(w)p'(w)+p^3(w))G(w)$, and $p'(w)=\omega_{\boldsymbol \xi \boldsymbol \xi}(w) - i\omega_{\boldsymbol \xi \boldsymbol \eta}(w)$, $p''(w)=\omega_{\boldsymbol \xi \boldsymbol \xi \boldsymbol \xi}(w) - i\omega_{\boldsymbol \eta \boldsymbol \xi \boldsymbol \xi}(w)$, where the latter two are uniformly H{\"o}lder continuous with exponent $\alpha$ on $\boldsymbol D$ due to \textit{Kellogg's} theorem. Using these relations and proceeding similarly as we did for the second order partial derivatives, it follows that the third order partial derivatives of $U$ are locally $\alpha$ H{\"o}lder continuous on $\overline{D}$, and hence uniformly H{\"o}lder continuous with exponent $\alpha$ due to the compactness of $\overline{D}$. This concludes the proof for the case when $m=2$. For an arbitrary positive integer $m \ge 2$ one can proceed by induction using the same arguments as for the case $m=2$. The proof is now completed. 
	\end{proof}
	
	The next theorem shows the equivalence of the solutions of Dirichlet and Neumann problems for the Laplace operator in the case of bounded, planar, doubly-connected regions.
	
	\begin{theorem}\label{thm in the case of doubly connected regions}
		Let $F: \boldsymbol A_{1;r_2}\rightarrow \boldsymbol D$ be the conformal map given in Lemma \ref{Auxiliary lemma for Theorems 5}, where $r_2=e^{\lambda_1}$, and assume $\boldsymbol D \in C^{2,\alpha}$ for some $\alpha \in (0,1)$. If $\Phi \in C^0(\partial \boldsymbol D)$ satisfies the compatibility condition $\int\limits_{\partial \boldsymbol D} \Phi ~d\sigma = 0$ and if $U$ is the solution of the Neumann problem \eqref{Neumann problem} on $\boldsymbol D$ with boundary data $\Phi$, satisfying $U(F(\sqrt{r_2}))=0$, then for any point $w \in \overline{\boldsymbol D}$
		\begin{align}\label{U doubly connected}
		U&(w) = \int\limits_{\frac{\sqrt{r_2}} {|G(w)|} }^1 \frac{u(F(\rho G(w)))}{\rho}d\rho \\ &+ \sqrt{r_2}\int\limits_0^{arg(G(w))}\left[\tilde{\mathcal{C}} -\Re\left(\int\limits_0^t \overline{\nabla u}(F(\sqrt{r_2}e^{i\tau}))F'(\sqrt{r_2}e^{i\tau})e^{i\tau}d\tau \right)\right]dt, \nonumber
		\end{align}
		where $u$ is the solution of the Dirichlet problem \eqref{Dirichlet problem} on $\boldsymbol D$ with boundary values 
		\begin{equation}\label{varphi}
		\varphi(w) = \begin{cases}\frac{\Phi(w)}{|\nabla \omega(w)|} ~&\text{if } |G(w)|=r_2, \\ -\frac{\Phi(w)}{|\nabla \omega(w)|} ~&\text{if } |G(w)|=1, \end{cases}
		\end{equation}
		and where the constant $\tilde{\mathcal{C}}$ is given by
		\begin{equation*}
		\frac{\sqrt{r_2}}{2\pi}\int\limits_0^{2\pi}\Re\left(\int\limits_0^t \overline{\nabla u}(F(\sqrt{r_2}e^{i\tau}))F'(\sqrt{r_2}e^{i\tau})e^{i\tau}d\tau \right)dt.
		\end{equation*}
		Conversely if $\varphi \in C^0(\partial \boldsymbol D)$ satisfies  $\int\limits_0^{2\pi}\varphi(F(r_2e^{i\theta}))~ d\theta = \int\limits_0^{2\pi}\varphi(F(e^{i\theta}))~ d\theta$ and if $U$ is the solution of the Neumann problem \eqref{Neumann problem} on $\boldsymbol D$ with boundary data
		\begin{equation}\label{Phi}
		\Phi(w) = \begin{cases}|\nabla \omega(w)|\varphi(w), ~&\text{if } |G(w)|=r_2, \\ -|\nabla \omega(w)|\varphi(w), ~&\text{if } |G(w)|=1, \end{cases}
		\end{equation}
		then the solution $u$ of the Dirichlet problem \eqref{Neumann problem} on $\boldsymbol D$ with boundary values $\varphi$ is
		\begin{equation}\label{u doubly connected}
		u\left(w\right) = \frac{\langle \nabla U(w); \nabla\omega(w) \rangle}{|\nabla\omega(w)|^2},~ w \in \overline{\boldsymbol D}.
		\end{equation}
	\end{theorem}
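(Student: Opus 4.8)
\medskip
\noindent\textbf{Proof proposal.} The plan is to reduce both assertions to the annular case already settled in Theorem \ref{main result cartesian} by transporting the Dirichlet and Neumann problems through the conformal map $F\colon\boldsymbol A_{1;r_2}\to\boldsymbol D$ of Lemma \ref{Auxiliary lemma for Theorems 5}, following the same conformal pull-back scheme employed in the proof of Theorem \ref{smooth exension proposition doubly-connected}; the only role of the hypothesis $\boldsymbol D\in C^{2,\alpha}$ is to invoke points $i.$--$v.$ of that lemma, which guarantee that $F$ and $F'$ extend continuously to $\overline{\boldsymbol A_{1;r_2}}$ with $F'$ non-vanishing there and $G'=pG\neq 0$ on $\overline{\boldsymbol D}$. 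For the direct implication I would first set $f:=(\Phi\circ F)\,|F'|$ on $\partial\boldsymbol A_{1;r_2}$. This is continuous, and by the change of variables $w=F(z)$ along the two boundary curves one has $\int_{\partial\boldsymbol D}\Phi\,d\sigma=\int_{\partial\boldsymbol A_{1;r_2}}(\Phi\circ F)\,|F'|\,d\sigma=\int_{\partial\boldsymbol A_{1;r_2}}f\,d\sigma$, so the compatibility condition $\int_{\partial\boldsymbol D}\Phi\,d\sigma=0$ becomes $\int_{\partial\boldsymbol A_{1;r_2}}f\,d\sigma=0$. Let $V$ be the solution of the Neumann problem \eqref{Neumann problem} on $\boldsymbol A_{1;r_2}$ with data $f$ and $V(\sqrt{r_2})=0$, and let $v$ be the solution of the Dirichlet problem \eqref{Dirichlet problem} on $\boldsymbol A_{1;r_2}$ with data $g$ equal to $r_2f$ on $C_{r_2}$ and $-f$ on $C_1$; by Theorem \ref{main result cartesian} the function $V$ is given explicitly in terms of $v$.

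\medskip
\noindent I would then put $W:=V\circ G$ and $u:=v\circ G$ (equivalently $v=u\circ F$). Conformal invariance makes $W$ and $u$ harmonic on $\boldsymbol D$, and a boundary computation using $|(\nabla\omega)\circ F|=|G'\circ F|/|z|$, hence $=1/(r_2|F'|)$ on $C_{r_2}$ and $=1/|F'|$ on $C_1$, shows that $u$ carries exactly the boundary values $\varphi$ of \eqref{varphi}. On the Neumann side the Wirtinger chain rule gives $\nabla W=(\nabla V\circ G)\,\overline{G'}=(\nabla V\circ G)/\overline{F'\circ G}$, which extends continuously to $\overline{\boldsymbol D}$ by point $iv.$ of Lemma \ref{Auxiliary lemma for Theorems 5}; combined with the expression of the outward normal $\boldsymbol\nu$ at $\partial\boldsymbol D$ in terms of $\nabla\omega$ (as displayed in the proof of Theorem \ref{smooth exension proposition doubly-connected}) this yields $\partial W/\partial\boldsymbol\nu=\Phi$ on $\partial\boldsymbol D$. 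Since moreover $W(F(\sqrt{r_2}))=V(\sqrt{r_2})=0$, uniqueness of the Neumann solution up to additive constants forces $W=U$. To obtain \eqref{U doubly connected} I would substitute $G(w)=|G(w)|\,e^{i\arg G(w)}$ into the formula of Theorem \ref{main result cartesian} for $V$, replace $v$ by $u\circ F$ in the radial integral, and use the identity $\frac{\partial v}{\partial\boldsymbol a_\tau}(\sqrt{r_2}e^{i\tau})=\Re\!\left(\overline{\nabla u}\bigl(F(\sqrt{r_2}e^{i\tau})\bigr)F'(\sqrt{r_2}e^{i\tau})\,e^{i\tau}\right)$ — a direct consequence of $\nabla(u\circ F)=(\nabla u\circ F)\,\overline{F'}$ — together with linearity of $\Re$ to pull the real part outside the inner $\tau$-integral; this reproduces both \eqref{U doubly connected} and the stated form of $\tilde{\mathcal C}$.

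\medskip
\noindent For the converse I would reverse the correspondence: given $\varphi\in C^0(\partial\boldsymbol D)$ with the stated balance condition, define $\Phi$ by \eqref{Phi}, take a Neumann solution $U$ on $\boldsymbol D$ with data $\Phi$, and set $V:=U\circ F$. Then $V$ is harmonic on $\boldsymbol A_{1;r_2}$, belongs to $C^1(\overline{\boldsymbol A_{1;r_2}})$ by point $iv.$ of Lemma \ref{Auxiliary lemma for Theorems 5}, and reversing the boundary computation above one checks that it solves the Neumann problem there with data $f=(\Phi\circ F)|F'|$, for which the relations $f=(\varphi\circ F)/r_2$ on $C_{r_2}$ and $f=-(\varphi\circ F)$ on $C_1$ hold, so that the associated annular Dirichlet data is $\varphi\circ F$. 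The converse half of Theorem \ref{main result cartesian} then gives that $v(re^{i\theta})=r\,\frac{\partial V}{\partial\boldsymbol a_\theta}(re^{i\theta})$ is the Dirichlet solution on $\boldsymbol A_{1;r_2}$ with that data, so $u:=v\circ G$ is the Dirichlet solution on $\boldsymbol D$ with data $\varphi$. Writing $v(z)=\Re\!\left(\overline{\nabla V(z)}\,z\right)$, substituting $z=G(w)$, and using $\nabla V=(\nabla U\circ F)\,\overline{F'}$ together with $F'(G(w))\,G(w)=1/p(w)$ and $p=\overline{\nabla\omega}$, one obtains $u(w)=\Re\!\left(\overline{\nabla U(w)}/p(w)\right)=\Re\!\left(\nabla U(w)/\nabla\omega(w)\right)=\langle\nabla U(w);\nabla\omega(w)\rangle/|\nabla\omega(w)|^{2}$, which is \eqref{u doubly connected}.

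\medskip
\noindent The step I expect to be the main obstacle is bookkeeping rather than conceptual: carefully transporting the \emph{normal} derivative and the explicit formula — keeping track of the factor $|\nabla\omega|=|G'|/|G|$, of the identity $|G|=e^{\omega}$, of the sign convention distinguishing the inner contour $C_1$ from the outer contour $C_{r_2}$, and of the Cauchy--Riemann/Wirtinger chain-rule manipulations that turn $\int\frac{v(\rho re^{i\theta})}{\rho}\,d\rho$ into $\int\frac{u(F(\rho G(w)))}{\rho}\,d\rho$ and the tangential term into its $\Re(\cdots)$ form. A minor secondary point is that \eqref{U doubly connected} is written with the principal argument $\arg(G(w))$, so one must note, via the $2\pi$-periodicity recorded in Theorem \ref{main theorem}, that the right-hand side does not depend on the chosen representative. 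None of this requires ideas beyond those already deployed in the proofs of Theorems \ref{main result cartesian} and \ref{smooth exension proposition doubly-connected}.
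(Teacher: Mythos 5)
Your proposal is correct and follows essentially the same route as the paper: conformal transport between $\boldsymbol D$ and $\boldsymbol A_{1;r_2}$ via $F$ and $G$, the identities $\overline{\nabla(U\circ F)}=(\overline{\nabla U}\circ F)F'$ and $|\nabla\omega\circ F|=1/(|z||F'|)$ to match the boundary data, and an application of Theorem \ref{main result cartesian} on the annulus. The only (immaterial) difference is directional — the paper pulls $U$ back to $V=U\circ F$ and reads off its annular Neumann data directly, whereas you build $V$ on the annulus, push forward, and invoke uniqueness up to constants to identify $W=V\circ G$ with $U$; both verifications are the same computation.
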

	
	\begin{proof}
		For brevity the following notations will be adopted $V=U\circ F$, $\boldsymbol \nu: \partial \boldsymbol D \rightarrow \mathbb{R}$ is the outward normal derivative at $\boldsymbol D$, $\boldsymbol n: \partial \boldsymbol A_{1,r_2} \rightarrow \mathbb{R}$ is the outward normal derivative at $\boldsymbol A_{1,r_2}$. We have $\Phi(w) = \langle \nabla U(w),\boldsymbol \nu(w) \rangle$ on $\partial \boldsymbol D$ and notice that
		\begin{equation*}
		\boldsymbol \nu(F(z)) = \begin{cases}\frac{z F'(z)}{r_2|F'(z)|} = \frac{\nabla \omega(F(z))}{|\nabla \omega(F(z))|}~ &\text{if } |z|=r_2, \\ -\frac{z F'(z)}{r_1|F'(z)|}  = -\frac{\nabla \omega(F(z))}{|\nabla \omega(F(z))|}~ &\text{if } |z|=1, \end{cases}
		\end{equation*}
		from where it is obtained that
		\begin{equation*}
		\Phi(w) = \Re(\overline{\nabla U}(w)\boldsymbol \nu(w)), ~\forall w \in \partial \boldsymbol D.
		\end{equation*}
		Also $\nabla V(z) = \frac{\partial}{\partial x}(U(F(z))) + i\frac{\partial}{\partial y}(U(F(z)))$ and letting $w = F(z)$ compute successively
		\begin{align*}
		\frac{\partial}{\partial x}(U(F(z))) &= U_{\boldsymbol \xi}(w)\xi_{\boldsymbol x}(z) + U_{\boldsymbol \eta}(w)\eta_{\boldsymbol x}(z) = \Re\left( \overline{\nabla U}(F(z))F'(z) \right), \\
		\frac{\partial}{\partial y}(U(F(z))) &= U_{\boldsymbol \xi}(w)\xi_{\boldsymbol y}(z) + U_{\boldsymbol \eta}(w)\eta_{\boldsymbol y}(z) \\
		&\stackrel{\text{C-R equations}}{=}- U_{\boldsymbol \xi}(w)\eta_{\boldsymbol x}(z) + U_{\boldsymbol \eta}(w)\xi_{\boldsymbol x}(z)
		= -\Im\left( \overline{\nabla U}(F(z))F'(z) \right).
		\end{align*}
		In conclusion $\nabla V(z) = \Re\left(\overline{\nabla U}(F(z))F'(z)\right)-i~\Im\left(\overline{\nabla U}(F(z))F'(z)\right), ~z \in \boldsymbol A_{1;r_2}$, from where it follows by means of a continuity argument that
		\begin{equation*}
		\overline{\nabla V}(z) = \overline{\nabla U}(F(z))F'(z), ~z \in \overline{\boldsymbol A_{1;r_2}},
		\end{equation*}
		yielding $\frac{\partial V}{\partial \boldsymbol n}(z)\stackrel{|z|=r_2}{=} \Re\left(\overline{\nabla V}(z)\frac{z}{r_2}\right) = \Re( \overline{\nabla U}(F(z))F'(z)\frac{z}{r_2}) = \Re(\overline{\nabla U}(F(z))|F'(z)|$ \\
		$\cdot\boldsymbol \nu(F(z))) = |F'(z)|\Phi(F(z))$, and similarly one obtains $\frac{\partial V}{\partial \boldsymbol n}(z) \stackrel{|z|=1}{=} \Re\left(-z\overline{\nabla V}(z)\right)=$ \\
		$\Re\left(-z\overline{\nabla U}(F(z))F'(z)\right) = |F'(z)|\Re\left(\overline{\nabla U}(F(z))\boldsymbol \nu(F(z))\right)= |F'(z)|\Phi(F(z))$.
		To sum up
		\begin{equation}\label{Phi_V}
		\frac{\partial V}{\partial \boldsymbol n}(z) = \Phi(F(z))|F'(z)| = \Phi(F(z))|\nabla \omega(F(z))|^{-1}, ~\forall z \in \partial \boldsymbol A_{1;r_2},
		\end{equation}
		where the second equality is due to the relation $G' = pG$. But then defining $\Phi_V: \partial \boldsymbol A_{1;r_2} \rightarrow \mathbb{R}$, $\Phi_V = \frac{\partial V}{\partial \boldsymbol n}$, since $V$ is harmonic in $\boldsymbol A_{1;r_2}$ and $\nabla V$ can be extended by continuity to $\overline{\boldsymbol A_{1;r_2}}$, it follows that $V$ is a solution of the Neumann problem \eqref{Neumann problem} on $\boldsymbol A_{1;r_2}$ with boundary data $\Phi_V$. In addition $V(\sqrt{r_2})=U(F(\sqrt{r_2}))=0$.
		So applying Theorem \ref{main result cartesian} for $\Phi_V$ 
		\begin{equation}\label{V doubly connected}
		V(z) = \int\limits_{\frac{\sqrt{r_2}}{|z|}}^1\frac{v(\rho z)}{\rho}~ d\rho~ + \sqrt{r_2}\int\limits_0^{arg(z)}\left( \mathcal{C} - \int\limits_0^t\frac{\partial v}{\partial \boldsymbol a_{\tau}}(\sqrt{r_2}e^{i\tau}) d\tau \right) dt,
		\end{equation}
		where $\mathcal{C} = \frac{\sqrt{r_2}}{2\pi}\int\limits_0^{2\pi}\int\limits_0^t\frac{\partial v}{\partial \boldsymbol a_{\tau}}(\sqrt{r_2}e^{i\tau}) ~d\tau dt$, and where $v$ is the solution of the Dirichlet problem \eqref{Dirichlet problem} on $\boldsymbol A_{1;r_2}$ with boundary data $\varphi_V(z) = \begin{cases}r_2\Phi_V(z) &~\text{if } |z|=r_2, \\ -\Phi_V(z) &~\text{if } |z|=1. \end{cases}$ Consequently if $u = v \circ G$ then $u$ is harmonic in $\boldsymbol D$, extends continuously to $\overline{\boldsymbol D}$, and has continuous boundary data $\varphi_V \circ G$, which coincides with $\varphi$ given in the statement of the theorem. In addition denoting $\mathcal{G} = \{F(C_{\sqrt{r_2}})\}$ (the image through $F$ of $C_{\sqrt{r_2}}$) one obtains the following two relations
		\begin{align*}
		v_{\boldsymbol x}(G(w)) &= \langle \nabla u(w),F'(G(w)) \rangle = \Re\left(\overline{\nabla u}(w) F'(G(w))\right), \\
		v_{\boldsymbol y}(G(w)) &= \langle \nabla u(w),-\eta_{\boldsymbol x}(z) + i\xi_{\boldsymbol x}(z) \rangle = -\Im\left(\overline{\nabla u}(w) F'(G(w))\right), ~ ~ \forall w \in \mathcal{G}.
		\end{align*}
		
		To this end letting $z=\sqrt{r_2}e^{i\tau}$ it follows that $\frac{\partial v}{\partial \boldsymbol a_{\tau}}(\sqrt{r_2}e^{i\tau}) = \langle \nabla v(z), \frac{z}{\sqrt{r_2}} \rangle $ \\
		$=\frac{1}{\sqrt{r_2}}\Re\left(\overline{\nabla v}(z)z\right) = \Re\left(\overline{\nabla u}(w)F'(G(w))e^{i\tau}\right)$, $\tau \in \mathbb{R}$, where $w = F\left(\sqrt{r_2}e^{i\tau}\right)$. Combining this with the expression of $V$ given in \eqref{V doubly connected} it follows that $U=V\circ G$ has the desired expression \eqref{U doubly connected}. \\ 
		
		For the second part observe first that the assumption $\int\limits_0^{2\pi}\varphi(F(r_2e^{i\theta}))~ d\theta = \int\limits_0^{2\pi}\varphi(F(e^{i\theta}))~ d\theta$ implies $\int\limits_{\partial \boldsymbol D} \Phi~ d\sigma = 0$. Next putting $G(w) = z = re^{i\theta}$ and using the first part of the theorem one obtains 
		\begin{equation*}
		\frac{\partial}{\partial r}U(F(re^{i\theta})) = \frac{u(F(re^{i\theta}))}{r}.
		\end{equation*}
		Consequently compute $\frac{\partial}{\partial r}U(F(re^{i\theta})) = U_{\boldsymbol \xi}(w)\left(\frac{\partial}{\partial r}\xi(re^{i\theta})\right) + U_{\boldsymbol \eta}(w)\left(\frac{\partial}{\partial r}\eta(re^{i\theta})\right)$. But $\frac{\partial}{\partial r}\xi(re^{i\theta}) = \frac{1}{r} \Re \left(\frac{1}{p(w)}\right) = \frac{1}{r} \frac{\omega_{\boldsymbol \xi}(w)}{\omega_{\boldsymbol \xi}^2(w) + \omega_{\boldsymbol \eta}^2(w)}$, and similarly $\frac{\partial}{\partial r}\eta(re^{i\theta}) = \frac{1}{r} \Im \left(\frac{1}{p(w)}\right)$ \\ 
		$= \frac{1}{r} \frac{\omega_{\boldsymbol \eta}(w)}{\omega_{\boldsymbol \xi}^2(w) + \omega_{\boldsymbol \eta}^2(w)}$. To sum up $\frac{u(w)}{r} = \frac{\partial}{\partial r}U(F(re^{i\theta})) = \frac{1}{r}U_{\boldsymbol \xi}(w) \frac{\omega_{\boldsymbol \xi}(w)}{\omega_{\boldsymbol \xi}^2(w) + \omega_{\boldsymbol \eta}^2(w)} + \frac{1}{r}U_{\boldsymbol \eta}(w)\cdot$ \\
		$\frac{\omega_{\boldsymbol \eta}(w)}{\omega_{\boldsymbol \xi}^2(w) + \omega_{\boldsymbol \eta}^2(w)}$ which concludes the whole proof.
	\end{proof}
	
	Though this section is devoted to the case of doubly-connected regions, it will be ended with a result concerning the bounded simply-connected regions in the plane. More precisely Theorem 5 in \cite{Beznea1} will be completed with a result concerning the smooth extension of the higher order partial derivatives of a solution to the Neumann problem in the case of a smooth, bounded, simply-connected region $\boldsymbol D \subset \mathbb{C}, ~\boldsymbol D \neq \mathbb{C}$.
	
	\begin{theorem}\label{completion of Beznea1}
		Let $\boldsymbol D$ be a smooth, bounded, simply-connected region of the complex plane and let $f: \mathbb{U} \rightarrow \boldsymbol D$ be the conformal transformation of $\mathbb{U}$ onto $\boldsymbol D$ with $f(0)=w_0$ and $f'(0)>0$; define $g=f^{-1}$ and assume there is some positive integer $m \ge 2$ and some $\alpha \in (0,1)$ such that $\boldsymbol D \in C^{m+1,\alpha}$. If $\Phi \in C^{m,\alpha}(\partial \boldsymbol D)$ satisfies the compatibility condition $\int\limits_{\partial \boldsymbol D}\Phi ~d\sigma = 0$ and if $U$ is the solution of the Neumann problem \eqref{Neumann problem} on $\mathbb{U}$ with boundary data $f$, satisfying $U(w_0)=0$, then
		\begin{equation}
		U(w) = \int\limits_0^1\frac{u(f(\rho g(w)))}{\rho}d\rho, ~w \in \overline{\boldsymbol D},
		\end{equation} 
		where $u$ is the solution of the Dirichlet problem \eqref{Dirichlet problem} on $\mathbb{U}$ with boundary values $\varphi = \frac{\Phi}{|g'|}$. Moreover $U$ and all its partial derivatives up to order $m+1$ are uniformly H{\"o}lder continuous with exponent $\alpha$ on $\boldsymbol D$. Conversely if $\varphi \in C^0(\partial \boldsymbol D)$ satisfies $\int\limits_0^{2\pi} \varphi ~d\theta = 0$ and if $U$ is a solution of the Neumann problem \eqref{Neumann problem} on $\mathbb{U}$ with boundary data $\Phi = \varphi |g'|$, then the solution $u$ of the Dirichlet problem \eqref{Dirichlet problem} on $\mathbb{U}$ with boundary data $g$ is given by
		\begin{equation}
		u(w) = \Re\left(\overline{\nabla U}(w)\frac{g(w)}{g'(w)}\right), ~w \in \overline{\boldsymbol D}.
		\end{equation}
	\end{theorem}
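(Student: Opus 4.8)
The plan is to transport the Neumann problem on $\boldsymbol D$ to the unit disk by conjugating with the Riemann map $f$, apply Corollary \ref{main corollary}, and carry the regularity back, exactly in the spirit of the proof of Theorem \ref{smooth exension proposition doubly-connected}. The first ingredient I would set up is the boundary regularity of $f$: since $\boldsymbol D \in C^{m+1,\alpha}$, the Kellogg--Warschawski theory of conformal maps (see e.g. \cite[Theorem 3.6]{Pommerenke}, together with the arguments used in the proof of Lemma \ref{Auxiliary lemma for Theorems 5}) shows that $f$ extends to a $C^{m+1,\alpha}$ map of $\overline{\mathbb{U}}$ onto $\overline{\boldsymbol D}$ with $f'$ non-vanishing on $\overline{\mathbb{U}}$; consequently $g=f^{-1}$ together with its derivatives up to order $m+1$ extends continuously to $\overline{\boldsymbol D}$, $g'$ does not vanish there, and $g$ is locally Lipschitz on the compact set $\overline{\boldsymbol D}$.

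Next I would set $V:=U\circ f$. By the conformal invariance of harmonicity $V$ is harmonic in $\mathbb{U}$ with $V(0)=U(w_0)=0$, and the Cauchy--Riemann equations give $\overline{\nabla V}(z)=\overline{\nabla U}(f(z))\,f'(z)$ on $\mathbb{U}$, so $\nabla V$ extends continuously to $\overline{\mathbb{U}}$ (because $\nabla U$ does so to $\overline{\boldsymbol D}$ and $f'$ to $\overline{\mathbb{U}}$). Using $\boldsymbol\nu(f(z))=\frac{zf'(z)}{|f'(z)|}$ on $\partial\mathbb{U}$ and $|f'(z)|=|g'(f(z))|^{-1}$ — the same computation as the one leading to \eqref{Phi_V} — one obtains $\frac{\partial V}{\partial\boldsymbol\nu}(z)=|f'(z)|\,\Phi(f(z))$, which is $\varphi$ read as a function on $\partial\mathbb{U}$ via $z\mapsto\Phi(f(z))|f'(z)|$; moreover $f$ transports $|f'|\,d\sigma$ on $\partial\mathbb{U}$ to $d\sigma$ on $\partial\boldsymbol D$, so $\int_{\partial\boldsymbol D}\Phi\,d\sigma=0$ becomes $\int_{\partial\mathbb{U}}\frac{\partial V}{\partial\boldsymbol\nu}\,d\sigma=0$. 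Thus $V$ is the solution of the Neumann problem \eqref{Neumann problem} on $\mathbb{U}$ with boundary data $\varphi$ normalized by $V(0)=0$, and Corollary \ref{main corollary} yields $V(z)=\int_0^1\frac{v(\rho z)}{\rho}\,d\rho$ with $v$ the Dirichlet solution on $\mathbb{U}$ for $\varphi$; composing with $g$ gives the stated formula $U(w)=V(g(w))=\int_0^1\frac{v(\rho g(w))}{\rho}\,d\rho$. For the converse I would run the same pull-back (the hypothesis on $\varphi$ yields $\int_{\partial\boldsymbol D}\Phi\,d\sigma=0$) and invoke the converse half of Corollary \ref{main corollary}: the Dirichlet solution on $\mathbb{U}$ with data $\varphi\circ f$ is $v(z)=r\,\frac{\partial V}{\partial\boldsymbol a_{\theta}}(z)=\Re\!\left(\overline{\nabla V}(z)\,z\right)$ for $z=re^{i\theta}$, so $u:=v\circ g$ is the Dirichlet solution on $\boldsymbol D$, and substituting $\overline{\nabla V}(g(w))=\overline{\nabla U}(w)\,g'(w)^{-1}$ (from $\overline{\nabla V}=(\overline{\nabla U}\circ f)f'$ and $f'(g(w))g'(w)=1$) produces $u(w)=\Re\!\left(\overline{\nabla U}(w)\,g(w)/g'(w)\right)$.

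For the Hölder statement I would first verify $\varphi\in C^{m,\alpha}(\partial\mathbb{U})$: along $\theta\mapsto e^{i\theta}$, the map $\theta\mapsto\Phi(f(e^{i\theta}))$ is $C^{m,\alpha}$ since $\Phi\in C^{m,\alpha}(\partial\boldsymbol D)$ and $f$ restricted to $\partial\mathbb{U}$ is a $C^{m+1,\alpha}$ parametrization of $\partial\boldsymbol D$, while $\theta\mapsto|f'(e^{i\theta})|$ is $C^{m,\alpha}$ and bounded below by a positive constant because $f'$ does not vanish; this is the same check as the one establishing $(\Phi\circ F)|F'|\in C^{m,\alpha}$ in the proof of Theorem \ref{smooth exension proposition doubly-connected}. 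Corollary \ref{main corollary} then gives that $V$ and all its partial derivatives up to order $m+1$ are uniformly $\alpha$-Hölder on $\overline{\mathbb{U}}$. Finally I would transfer this to $U=V\circ g$: for each $k\le m+1$ the chain rule writes a partial derivative of $U$ of order $k$ as a finite sum of products of partial derivatives of $V$ of order $\le k$ composed with $g$, times partial derivatives of $g$ of order $\le k$; since $g$ is locally Lipschitz on the compact set $\overline{\boldsymbol D}$ and its derivatives up to order $m+1$ are uniformly $\alpha$-Hölder there, the usual bounded-sum-of-products estimate — carried out explicitly for $m=2$ in the proof of Theorem \ref{smooth exension proposition doubly-connected} and extended by induction on $m$ — shows that $U$ and all its partial derivatives up to order $m+1$ are uniformly $\alpha$-Hölder on $\boldsymbol D$.

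The genuinely substantive point, and the main obstacle, is item~(i): the $C^{m+1,\alpha}$ boundary regularity of the Riemann map together with the non-vanishing of $g'$ on $\overline{\boldsymbol D}$ — this is where the hypothesis $\boldsymbol D\in C^{m+1,\alpha}$ is really used, and it must be imported from the theory of conformal maps rather than reproved here. Everything else is bookkeeping: the careful matching of which datum ($\Phi$, $\varphi$, $\partial V/\partial\boldsymbol\nu$) lives on which boundary and how the two compatibility conditions correspond under $f$, and the lengthy but mechanical propagation of the Hölder bounds through composition with $g$ up to order $m+1$, which is entirely parallel to the doubly-connected case already treated in Theorem \ref{smooth exension proposition doubly-connected}.
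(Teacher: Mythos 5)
Your proposal is correct and follows essentially the same route as the paper: conjugate by the Riemann map, identify $V=U\circ f$ as the solution of a Neumann problem on $\mathbb{U}$ with data $(\Phi\circ f)\,|f'|$, apply Corollary \ref{main corollary}, and propagate the H\"older bounds back through $g$ using the Kellogg--Warschawski regularity of $f$ and the non-vanishing of $f'$ on $\overline{\mathbb{U}}$. The only difference is that the paper imports the representation formula and its converse directly from \cite[Theorem 5]{Beznea1} and devotes its proof solely to the H\"older statement, whereas you re-derive the formula from the pull-back and Corollary \ref{main corollary}; this is harmless since your derivation is the standard one and all steps check out.
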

	
	\begin{proof}
		For the first part, in light of \cite[Theorem 5]{Beznea1}, it only remains to prove that $U$ together with all its partial derivatives up to order $m+1$ are uniformly H{\"o}lder continuous with exponent $\alpha$ provided that $\Phi \in C^{m,\alpha}(\boldsymbol D), ~\boldsymbol D \in C^{m+1,\alpha}$. Again, to keep the derivations simple, the theorem will be proved only for the case $m=2$, as the general case $m \ge 2$ follows similarly using induction. To see that our claim for the case $m=2$ is indeed true notice that if $\boldsymbol D \in C^{3,\alpha}$ then in view of \cite[Chapter 3]{Pommerenke} $f^{(3)}$ is uniformly H{\"o}lder continuous with exponent $\alpha$ and moreover the continuous extension of $f'$ to $\overline{\mathbb{U}}$ does not vanish. Consequently $g'=\frac{1}{f'\circ g}$ extends continuously to $\overline{\boldsymbol D}$ and $g$ turns out to be uniformly Lipschitz continuous. Also it is easy to see that $f \in C^{3,\alpha}(\mathbb{U})$ and $\inf\limits_{z \in \mathbb{U}}|f'(z)| > 0$ imply that $g'$ as well as $g''$ and $g'''$ are uniformly H{\"o}lder continuous with exponent $\alpha$. On the other hand if $\Gamma$ ia any $C^{2,\alpha}$ parameterization of the unit circle then, assuming without loss of generality that $0 \not\in \partial \boldsymbol D$, $|f'\circ\Gamma|$ is readily seen to belong to $C^{2,\alpha}(\mathbb{R})$ and thus the function $\left(\Phi\circ f\right)|f'| \in C^{2,\alpha}(\partial \mathbb{U})$. Defining now $V = U\circ f$ it follows that $V$ is harmonic in $\mathbb{U}$ and $\overline{\nabla V}(z) = \overline{\nabla U}(f(z))f'(z)$. Thus $\nabla V$ extends continuously to $\overline{\mathbb{U}}$, which in turn shows that the normal derivative of $V$ is $\frac{\partial V}{\partial \boldsymbol n}(z) = \Re\left(z\overline{\nabla V}(z)\right) = \Re\left(z\overline{\nabla U}(f(z))f'(z)\right)=|f'(z)|\Re\left(\overline{\nabla U}(f(z))\nu(f(z))\right)=\Phi(f(z))|f'(z)| ~\forall z \in \partial \mathbb{U}$, where 
		\begin{equation*}
		\nu(w) = g(w)\frac{|g'(w)|}{g(w)}, ~w \in \partial \boldsymbol D
		\end{equation*}
		is the unitary outward pointing normal to $\partial \boldsymbol D$ at $w$ (see for instance \cite{Beznea1}). In conclusion $V$ is the solution of the Neumann problem \eqref{Neumann problem} on $\mathbb{U}$ with boundary data $\left(\Phi\circ f\right)|f'|$ and since the latter was proved to belong to $C^{2,\alpha}(\partial \mathbb{U})$ Corollary \ref{main corollary} guarantees that $V$ and all its partial derivatives up to order $m=3$ are uniformly H{\"o}lder continuous with exponent $\alpha$. Finally $\overline{\nabla U} = \left(\overline{\nabla V}\circ g\right)g'$ and since $\overline{\nabla U}$ and $\overline{\nabla V}$ are analytic functions in $\boldsymbol D$ and $\mathbb{U}$, respectively, taking the derivatives and using the \textit{Cauchy-Riemann} equations as well as the relations $\overline{\nabla U}' = U_{\boldsymbol \xi \boldsymbol \xi} - iU_{\boldsymbol \xi \boldsymbol \eta} = -U_{\boldsymbol \eta \boldsymbol \eta} - iU_{\boldsymbol \eta \boldsymbol \xi}$, $\overline{\nabla V}' = V_{\boldsymbol x \boldsymbol x} - iV_{\boldsymbol x \boldsymbol y}$, one obtains
		\begin{align*}
		\begin{cases}
		U_{\boldsymbol \xi}(w) = \Re\left(\overline{\nabla V}(g(w))g'(w)\right), \\
		U_{\boldsymbol \eta}(w) = -\Im\left(\overline{\nabla V}(g(w))g'(w)\right), \\
		U_{\boldsymbol \xi \boldsymbol \xi}(w) = \Re\left(\overline{\nabla V}'(g(w))(g'(w))^2 + \overline{\nabla V}(g(w))g''(w)\right), \\
		U_{\boldsymbol \xi \boldsymbol \eta}(w) = -\Im\left(\overline{\nabla V}'(g(w))(g'(w))^2 + \overline{\nabla V}(g(w))g''(w)\right), \\
		U_{\boldsymbol \eta \boldsymbol \eta}(w) = -U_{\boldsymbol \xi \boldsymbol \xi}(w) = -\Re\left(\overline{\nabla V}'(g(w))(g'(w))^2 + \overline{\nabla V}(g(w))g''(w)\right), \\	
		U_{\boldsymbol \xi \boldsymbol \xi \boldsymbol \xi}(w) = \Re\left(\mathcal{F}\right), ~U_{\boldsymbol \eta \boldsymbol \eta \boldsymbol \eta}(w) = \Im(\mathcal{F}),  \\
		U_{\boldsymbol \xi \boldsymbol \xi \boldsymbol \eta}(w) = U_{\boldsymbol \xi \boldsymbol \eta \boldsymbol \xi}(w) = U_{\boldsymbol \eta \boldsymbol \xi \boldsymbol \xi}(w) = -\Im(\mathcal{F}), \\
		U_{\boldsymbol \xi \boldsymbol \eta \boldsymbol \eta}(w) = U_{\boldsymbol \eta \boldsymbol \xi \boldsymbol \eta}(w) = U_{\boldsymbol \eta \boldsymbol \eta \boldsymbol \xi}(w) = -\Re(\mathcal{F}),
		\end{cases}
		\end{align*}
		with $\mathcal{F}(w)=\overline{\nabla V}''(g(w))(g'(w))^3+2\overline{\nabla V}'(g(w))g'(w)g''(w) + \overline{\nabla V}'(g(w))g'(w)g''(w)$ \\
		$+ \overline{\nabla V}(g(w))g'''(w)$ analytic. The theorem now follows using the properties of $\nabla V$ and $g$ discussed above, as well as using the triangle inequality in the above relations in a similar way it was done in the proof of Theorem \ref{smooth exension proposition doubly-connected}.
	\end{proof}
	
	\section{Conclusions \label{Conclusions}} The paper provides an equivalence between the solutions of the Neumann and the Dirichlet problems for planar, smooth, bounded, doubly-connected regions. This equivalence is expressed by the fact that solving any of these two problems leads by an analytic formula to an explicit solution of the other problem. As an application of this intimate connection the theory developed in this paper shows that under additional smoothness assumptions on the boundary of the region as well as on the boundary data, the higher order partial derivatives of the solutions of the Neumann problem are uniformly H{\"o}lder conditions. These assumptions are similar to those appearing in \textit{Kellogg's} theorem, where the problem of continuous extensions of the higher order partial derivatives for the solution of the Dirichlet problem was investigated. This fact comes to enhance the connection between the Dirichlet and the Neumann problems thus showing that for some types of regions of the complex plane these two problems should be studied simultaneously in a unified approach. In the end, a closer examination of the results reveals that the dependency between the solutions of the Dirichlet and the Neumann problems is more complex in the case of doubly-connected regions, and a natural question that arises is how does this connection look like for regions of connectivity greater than or equal to three and how could this be extended to regions of $\mathbb{R}^d$ for $d \ge 3$? A positive answer might help us dive deeper into this intimate connection     
	and obtain new interesting results regarding these fundamental problems.

\newpage

\section{APPENDIX}

This section is entirely devoted to the connection between the solution of the Dirichlet problem and a particular solution of the Neumann problem, in the case of elliptic regions. Although the elliptic regions are obviously planar, smooth, bounded, simply-connected regions for which the connection between the two problems has been explicitly provided (see \cite[Theorem 5]{Beznea1} or \cite[Theorem 5]{Beznea2}), the conformal mapping on which this connection relies on is somewhat cumbersome, thus making the representation of the Neumann problem in terms of the solution of the Dirichlet problem somehow redundant for a direct application. This issue is fixed in the paper at hand by considering another approach for obtaining the desired connection. This approach is based on the Joukowsky transform. \\

Let $\boldsymbol J: \mathbb{C}^* \rightarrow \mathbb{C}, ~\boldsymbol J(w) = \frac{1}{2}\left(w + \frac{1}{w}\right)$ be the Joukowsky transform, and define $\boldsymbol J_{+} = \boldsymbol J_{|_{\mathbb{U}^c\setminus \{e^{i\theta}|\theta\in (-\pi,0)\}}}$, $\boldsymbol T_{+} = \boldsymbol J_{+}^{-1}$, $\boldsymbol J_{-} = \boldsymbol J_{|_{\dot{\mathbb{U}} \cup \{e^{i\theta}|\theta\in [-\pi,0]\}}}$, $\boldsymbol T_{-} = \boldsymbol J_{-}^{-1}$. Throughout this section, the argument of a complex number will be defined as taking values in $(-\pi,\pi]$ and if $z$ is any complex number then its square root will be defined as 
\begin{equation*}
\sqrt{z} = \begin{cases}|z|e^{i\frac{arg(z)}{2}} &~\text{if } z \neq 0, \\ 0 &~\text{if } z=0. \end{cases}
\end{equation*}
If $\rho>1$ let $\boldsymbol E_{\rho}$ be the interior of the ellipse with equation $\frac{4x^2}{\left(\rho+\rho^{-1}\right)^2}+\frac{4y^2}{\left(\rho-\rho^{-1}\right)^2}=1$ and for $\theta \in (-\pi,\pi]$ the hyperbola described by $\frac{x^2}{\cos^2\theta}-\frac{y^2}{\sin^2\theta}=1$ will be denoted $H_{\theta}$. Finally we let $\mathcal{H} = \{H_{\theta}|\theta\in(-\pi,\pi]\}$. It is not difficult to notice that $\mathcal{H}$ is actually the set of all hyperbolas orthogonal to the family of confocal ellipses having foci $\{(-1,0),(1,0)\}$. This aspect will play an essential role in the proof of Theorem \ref{Main Result for Elliptical Regions} below.

\begin{lemma}\label{Lemma_az}
	$\boldsymbol T_+$ has  the following properties. 
	\begin{enumerate}
		\item It is well defined on the whole $\mathbb{C}$; \\
		\item It is analytic in $\mathbb{C} \setminus [-1,1]$ with nonzero derivative; \\
		\item For any point $\xi \in [-1,1]$ and any sequence $\{z_n\}_{n=1}^{\infty}$ satisfying 
		\begin{enumerate}
			\item[i.] there exist $H_{\theta} \in \mathcal{H}$ for which $z_n \in \mathcal{H_{\theta}} \cap \{\Im(z) > 0\} ~\forall ~n \in \mathbb{N}^*$,
			\item[ii.] $z_n \rightarrow \xi$,
		\end{enumerate}
		one has $\lim\limits_{n \rightarrow \infty} \boldsymbol T_+(z_n) = \boldsymbol T_+(\xi)$; \\
		\item $\boldsymbol T_+(z) = z + \sqrt{z^2 - 1}, ~z \in \mathbb{C}$.
	\end{enumerate}
\end{lemma}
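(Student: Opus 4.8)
The plan is to derive everything from two classical facts about the Joukowsky map $\boldsymbol J(w)=\tfrac12(w+w^{-1})$: that $\boldsymbol J$ restricts to a biholomorphism of $\{|w|>1\}$ onto $\mathbb{C}\setminus[-1,1]$, and that $e^{i\theta}\mapsto\cos\theta$ is a bijection of the closed upper semicircle $\{e^{i\theta}:\theta\in[0,\pi]\}$ onto $[-1,1]$. Writing $\boldsymbol D_+:=\{|w|\ge1\}\setminus\{e^{i\theta}:\theta\in(-\pi,0)\}$ for the domain of $\boldsymbol J_+$, I would first check that $\boldsymbol J_+:\boldsymbol D_+\to\mathbb{C}$ is a bijection: surjectivity is immediate from the two facts, and injectivity holds because $\boldsymbol J(w_1)=\boldsymbol J(w_2)$ forces $w_1=w_2$ or $w_1w_2=1$, the latter being impossible for distinct points of $\boldsymbol D_+$ (if one of them has modulus $>1$ then $|w_1w_2|>1$; if both lie on the circle then $w_2=\bar w_1$, which on the closed upper arc forces $w_1=w_2=\pm1$). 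This gives point 1. Point 2 then follows since $\boldsymbol J'(w)=\tfrac12(1-w^{-2})$ vanishes only at $w=\pm1\notin\{|w|>1\}$, so $\boldsymbol J$ is a conformal bijection of $\{|w|>1\}$ onto $\mathbb{C}\setminus[-1,1]$; and $\boldsymbol T_+$ restricted to $\mathbb{C}\setminus[-1,1]$ coincides with the inverse of that restriction (the boundary arc of $\boldsymbol D_+$ maps into $[-1,1]$), hence is holomorphic with nonvanishing derivative by the inverse function theorem.

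For point 4 I would set $w:=z+\sqrt{z^2-1}$, where $\sqrt{z^2-1}$ denotes the branch holomorphic on $\mathbb{C}\setminus[-1,1]$ that is asymptotic to $z$ at infinity (equivalently the product of the principal square roots of $z-1$ and $z+1$), extended to $[-1,1]$ by its boundary values from the upper half-plane. The algebraic identity $(z+\sqrt{z^2-1})(z-\sqrt{z^2-1})=1$ shows $w\ne0$, $w^{-1}=z-\sqrt{z^2-1}$, and therefore $\boldsymbol J(w)=z$; it then remains only to see $w\in\boldsymbol D_+$. On the connected set $\mathbb{C}\setminus[-1,1]$ the continuous map $z\mapsto w$ never meets the unit circle — if $|w|=1$ then $z=\boldsymbol J(w)=\Re w\in[-1,1]$, a contradiction — so it maps $\mathbb{C}\setminus[-1,1]$ into a single component of $\{|w|\ne1\}$, and evaluating at $z=2$, where $w=2+\sqrt3>1$, identifies this component as $\{|w|>1\}$. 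For $z=\xi\in[-1,1]$ one gets $w=\xi+i\sqrt{1-\xi^2}=e^{i\vartheta}$ with $\vartheta\in[0,\pi]$, a point of the closed upper arc. In both cases $w\in\boldsymbol D_+$ with $\boldsymbol J(w)=z$, so $\boldsymbol T_+(z)=w$, which is point 4.

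For point 3, given $\xi\in(-1,1)$ let $\vartheta\in(0,\pi)$ be the unique angle with $\cos\vartheta=\xi$. A direct computation shows $\boldsymbol J(re^{i\vartheta})$ satisfies the equation of $H_\theta$ whenever $\cos^2\vartheta=\cos^2\theta$ and has positive imaginary part for $r>1$, so the branch of $H_\theta$ accumulating at $\xi$ is exactly $\{\boldsymbol J(re^{i\vartheta}):r>1\}$; hence for $n$ large any admissible sequence can be written $z_n=\boldsymbol J(r_ne^{i\vartheta})$ with $r_n>1$. Since $\Im z_n=\tfrac12(r_n-r_n^{-1})\sin\vartheta\to0$ and $\sin\vartheta\ne0$ we get $r_n\to1^+$, so $\boldsymbol T_+(z_n)=r_ne^{i\vartheta}\to e^{i\vartheta}=\boldsymbol T_+(\xi)$ by point 4 (or the definition of $\boldsymbol J_+$ on the upper arc); the cases $\xi=\pm1$ are degenerate or follow from continuity of $z\mapsto z+\sqrt{z^2-1}$ at $\pm1$. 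The hypothesis $\Im(z_n)>0$ is essential here: $\boldsymbol T_+$ carries the upper half-plane into the upper half-plane, matching the choice of the \emph{upper} semicircle in the definition of $\boldsymbol J_+$, whereas a sequence tending to $\xi$ from below would yield the limit $e^{-i\vartheta}\ne\boldsymbol T_+(\xi)$.

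The part I expect to require the most care is point 4: choosing the branch of $\sqrt{z^2-1}$ correctly and showing that $z+\sqrt{z^2-1}$ lands in $\boldsymbol D_+$ rather than inside the unit disk. The naive "principal square root of the number $z^2-1$" is the wrong branch, being discontinuous across the imaginary axis as well as across $[-1,1]$; the connectedness-plus-single-evaluation argument above, together with the separate boundary computation on $[-1,1]$, is what makes the correct branch choice and the inclusion into the right component precise. Points 1–3 are then essentially bookkeeping with the Joukowsky map, once point 4 is in hand.
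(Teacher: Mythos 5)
Your proof is correct, and for parts (2) and (3) it follows essentially the same route as the paper: the inverse function theorem applied to $\boldsymbol J$ on $\{|w|>1\}$, where $\boldsymbol J'(w)=\tfrac12\left(1-w^{-2}\right)\neq 0$, and the parametrization $z_n=\boldsymbol J(\rho_n e^{i\vartheta})$ of the relevant upper branch of the hyperbola, with $\rho_n\to 1^{+}$ forcing $\boldsymbol T_+(z_n)=\rho_ne^{i\vartheta}\to e^{i\vartheta}=\boldsymbol T_+(\xi)$. The genuine difference lies in parts (1) and (4), which the paper dispatches by citing Ahlfors; you actually prove them, and both arguments are sound --- the injectivity step ruling out $w_1w_2=1$ on the domain of $\boldsymbol J_+$, and the connectedness-plus-evaluation-at-$z=2$ argument placing $z+\sqrt{z^2-1}$ in $\{|w|>1\}$. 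More importantly, your caveat about the branch of $\sqrt{z^2-1}$ is not pedantry but a substantive correction: the appendix fixes $\sqrt{\zeta}=|\zeta|^{1/2}e^{i\arg(\zeta)/2}$ with $\arg\in(-\pi,\pi]$, and with that convention $z\mapsto z+\sqrt{z^2-1}$ is discontinuous across the imaginary axis and takes values \emph{inside} the unit disk for $z$ in the second and third quadrants (e.g.\ $z=-0.1+i$ yields a point of modulus about $0.41$), so the identity in part (4) fails there as literally written; the value produced is $1/\boldsymbol T_+(z)$ rather than $\boldsymbol T_+(z)$. Your reading of $\sqrt{z^2-1}$ as the branch analytic on $\mathbb{C}\setminus[-1,1]$ asymptotic to $z$ at infinity (equivalently the product of the principal square roots of $z-1$ and $z+1$), with boundary values from above on $[-1,1]$, is the one that makes the statement true, and making that choice explicit is an improvement over the paper's bare citation. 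The only places where you are as terse as the paper are the degenerate cases $\xi=0$ (where $H_{\pi/2}$ degenerates) and $\xi=\pm1$ in part (3), but these are harmless and handled no better in the original.
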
 

\begin{proof} 
	\begin{enumerate}
		\item It is not difficult to see that $\boldsymbol J_+\left(\mathbb{U}^c\setminus \{e^{i\theta}:\theta\in (-\pi,0)\}\right) = \mathbb{C}$ and that $\boldsymbol J_+$ is invertible there (see for instance \cite[Chapter 4.2]{Ahlfors}). This property was actually used in the definition of $\boldsymbol T_+$.
		\item Indeed choose any point $z \in \mathbb{C} \setminus [-1,1]$ and let $w = \boldsymbol T_+(z)$. As the set $\boldsymbol T_+\left(\mathbb{C} \setminus [-1,1]\right) = \{\zeta \in \mathbb{C}: |\zeta| > 1\}$ is open and  $\boldsymbol J_+'(w) = \boldsymbol J'(w) \neq 0$  it follows that there is an open neighborhood $U_w$ of $w$ in $\{\zeta \in \mathbb{C}: |\zeta| > 1\}$ such that $\boldsymbol J_+'(u) \neq 0 \ \forall u \in U_w$. Define $V_z = \boldsymbol J_+(U_w)$. Then $V_z$ is an open subset of $\mathbb{C}$ and we have $\boldsymbol T_+'(l)=\frac{1}{\boldsymbol J_+'(\boldsymbol T_+(l))} \in \mathbb{C}^* ~\forall ~l \in V_z$. 
		\item Since $z_n \in \mathcal{H_{\theta}} \cap \{\Im(z) > 0\}$ it follows that $z_n = \frac{1}{2}\left(\rho_n + \frac{1}{\rho_n}\right)\cos\theta + \frac{i}{2}\left(\rho_n - \frac{1}{\rho_n}\right)\sin\theta$ for some $\rho_n > 1$ and some $\theta \in [0,\pi]$. On the other hand since $z_n \rightarrow \xi \in [-1,1]$ it follows that $\rho_n \rightarrow 1$ and thus $\xi = \cos\theta$. One can also deduce that $\boldsymbol T_+(z_n) = \rho_ne^{i\theta}$ and thus $\lim\limits_{n \rightarrow \infty} \boldsymbol T_+(z_n) = e^{i\theta}$. But $\boldsymbol J_+\left(e^{i\theta}\right) = \frac{1}{2}\left(e^{i\theta} + e^{-i\theta}\right) = \cos\theta = \xi$ and so $\boldsymbol T_+(z_n) \rightarrow e^{i\theta} = \boldsymbol T_+(\xi)$.
		\item See \cite[Chapter 3]{Ahlfors}. 
	\end{enumerate}
\end{proof}

\begin{theorem}\label{Main Result for Elliptical Regions}
	Fix some $\rho>1$ and assume that $f \in C^0(\partial \boldsymbol E_{\rho})$ satisfies the compatibility condition $\int\limits_{\partial \boldsymbol E_{\rho}}f d\sigma = 0$. If $U$ is the solution of the Neumann problem \eqref{Neumann problem} on $\textbf{E}_{\rho}$ having boundary data $f$ and satisfying $U(1)=0$, then letting $R(z)e^{i\Theta(z)} := \boldsymbol T_+(z), ~\Theta(z) \in (-\pi,\pi]$, one has
	\begin{equation}\label{formula for ellipse}
	U(z) = \int\limits_{\frac{1}{R(z)}}^1 \frac{u(t\boldsymbol T_{+}(z))}{t}dt
	- \Theta(z) \int\limits_0^1\int\limits_0^{t\Theta(z)}\frac{\partial u}{\partial \boldsymbol a_{\tau}}(e^{i\tau})d\tau dt, ~ z \in \boldsymbol E_{\rho},
	\end{equation}
	where $u$ is the solution of the Dirichlet problem on $\{\frac{1}{\rho}<|w|<\rho\}$ with boundary values
	\begin{align*} 
	\varphi_{-} &= \frac{f\circ \boldsymbol J}{\rho |\boldsymbol T_{-}'\circ \boldsymbol J|} ~\text{on} ~C_{\frac{1}{\rho}}, \\
	\varphi_{+} &= \rho\frac{f\circ \boldsymbol J}{|\boldsymbol T_{+}'\circ \boldsymbol J|} ~\text{on} ~C_{\rho}.
	\end{align*}
\end{theorem}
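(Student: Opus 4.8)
The plan is to transport the problem from the ellipse to an annulus via the Joukowsky map and then invoke Theorem~\ref{main result cartesian}. First I would set $V := U\circ\boldsymbol J$ on $\boldsymbol A_{1/\rho,\rho}$. Since $\boldsymbol J$ is holomorphic on a neighbourhood of $\overline{\boldsymbol A_{1/\rho,\rho}}$ (which avoids the origin) and maps $\overline{\boldsymbol A_{1/\rho,\rho}}$ onto $\overline{\boldsymbol E_{\rho}}$, and $U\in C^{2}(\boldsymbol E_{\rho})\cap C^{1}(\overline{\boldsymbol E_{\rho}})$, the function $V$ is harmonic on $\boldsymbol A_{1/\rho,\rho}$ --- the critical points $w=\pm1$ of $\boldsymbol J$ are harmless, $U$ being harmonic near the foci $\pm1$ --- and $V\in C^{1}(\overline{\boldsymbol A_{1/\rho,\rho}})$ because $\boldsymbol J$ is smooth up to the boundary and $U\in C^{1}(\overline{\boldsymbol E_{\rho}})$. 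Moreover $V(1)=U(\boldsymbol J(1))=U(1)=0$, and the identity $\boldsymbol J(w)=\boldsymbol J(1/w)$ gives the reflection symmetry
\begin{equation*}
V(w)=V(1/w),\qquad\text{equivalently}\qquad \widehat V(r,\theta)=\widehat V(1/r,-\theta),
\end{equation*}
where $\widehat V(r,\theta):=V(re^{i\theta})$; in particular $\widehat V(1,\theta)=\widehat V(1,-\theta)$, which is just the single-valuedness of $U$ over the degenerate segment $[-1,1]$. This symmetry will be the crucial ingredient.

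Second, I would identify $V$ as a solution of the Neumann problem \eqref{Neumann problem} on $\boldsymbol A_{1/\rho,\rho}$. Using the conformal transformation rule $\overline{\nabla V}(w)=\overline{\nabla U}(\boldsymbol J(w))\,\boldsymbol J'(w)$ and observing that $\boldsymbol J$ carries each circle $C_{r}$ onto the confocal ellipse with semi-axes $\tfrac12(r+r^{-1})$ and $\tfrac12|r-r^{-1}|$ --- which enlarges as $|r|$ moves away from $1$, so that $\boldsymbol J'(w)w$ points along the outward unit normal $\boldsymbol\nu_{\boldsymbol E}$ to $\boldsymbol E_{\rho}$ at $\boldsymbol J(w)$ when $|w|=\rho$, and $-\boldsymbol J'(w)w$ does so when $|w|=1/\rho$ --- a short orientation computation together with $\boldsymbol T_{\pm}'=1/(\boldsymbol J'\circ\boldsymbol T_{\pm})$ yields
\begin{equation*}
\frac{\partial V}{\partial\boldsymbol\nu}(w)=\frac{f(\boldsymbol J(w))}{|\boldsymbol T_{+}'(\boldsymbol J(w))|}\ \ (|w|=\rho),\qquad \frac{\partial V}{\partial\boldsymbol\nu}(w)=\frac{f(\boldsymbol J(w))}{|\boldsymbol T_{-}'(\boldsymbol J(w))|}\ \ (|w|=1/\rho).
\end{equation*}
The compatibility condition $\int_{\partial\boldsymbol A_{1/\rho,\rho}}\partial_{\boldsymbol\nu}V\,d\sigma=0$ is automatic ($V$ harmonic and $C^{1}$ up to the boundary), so Theorem~\ref{main result cartesian}, applied with $r_{1}=1/\rho$, $r_{2}=\rho$ (hence $\sqrt{r_{1}r_{2}}=1$ and $\boldsymbol J(1)=1$), expresses $V$ through \eqref{U cartesian} in terms of the solution $v$ of the Dirichlet problem on $\boldsymbol A_{1/\rho,\rho}$ whose boundary values are $\rho\,\partial_{\boldsymbol\nu}V$ on $C_{\rho}$ and $-\tfrac1\rho\,\partial_{\boldsymbol\nu}V$ on $C_{1/\rho}$. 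By the two displays these agree, up to the normalisations $r_{1},r_{2}$ prescribed in Theorem~\ref{main result cartesian}, with the boundary values $\varphi_{\pm}$ of the statement, so by uniqueness of the Dirichlet solution $v$ is the function $u$ of the statement.

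The heart of the argument is that the constant $\mathcal C$ occurring in \eqref{U cartesian} (given by \eqref{C reprezentare}) vanishes here. Differentiating \eqref{U definition Th1} in the radial variable (equivalently, using the converse half of Theorem~\ref{main theorem}) gives $v(r,\theta)=r\,\widehat V_{\boldsymbol r}(r,\theta)$; differentiating $\widehat V(r,\theta)=\widehat V(1/r,-\theta)$ once and twice in $r$ and setting $r=1$ shows that $\widehat V_{\boldsymbol r}(1,\cdot)$ is an odd function of $\theta$, while $\widehat v_{\boldsymbol r}(1,\cdot)=\widehat V_{\boldsymbol r}(1,\cdot)+\widehat V_{\boldsymbol r\boldsymbol r}(1,\cdot)$ collapses to the even part of $\widehat V_{\boldsymbol r\boldsymbol r}(1,\cdot)$, hence is even in $\theta$. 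Since $\int_{0}^{2\pi}\widehat v_{\boldsymbol r}(1,\tau)\,d\tau=0$ (established inside the proof of Theorem~\ref{main theorem}), the antiderivative $t\mapsto\int_{0}^{t}\widehat v_{\boldsymbol r}(1,\tau)\,d\tau$ is $2\pi$-periodic and odd, so its average over $[0,2\pi]$ is $0$; by \eqref{C reprezentare} that average is exactly $\mathcal C$, whence $\mathcal C=0$.

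It then remains to substitute $w=\boldsymbol T_{+}(z)$ for $z\in\boldsymbol E_{\rho}$. Since $\boldsymbol T_{+}$ is defined on all of $\mathbb C$ by Lemma~\ref{Lemma_az} and $\boldsymbol T_{+}(z)$ lies in $\{1\le|w|<\rho\}$, which is contained in the domain of $\boldsymbol J_{+}$, one has $\boldsymbol J(\boldsymbol T_{+}(z))=z$, hence $V(\boldsymbol T_{+}(z))=U(z)$, with $|\boldsymbol T_{+}(z)|=R(z)$ and $\arg\boldsymbol T_{+}(z)=\Theta(z)$; the apparent jump of $\boldsymbol T_{+}$ across $[-1,1]$ is reconciled with the continuity of $U$ precisely by the symmetry $\widehat V(1,\theta)=\widehat V(1,-\theta)$. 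Plugging $w=\boldsymbol T_{+}(z)$ into \eqref{U cartesian}, deleting the (now zero) $\mathcal C$-term, and rewriting the surviving double integral $-\int_{0}^{\Theta(z)}\!\int_{0}^{t}\widehat v_{\boldsymbol r}(1,\tau)\,d\tau\,dt$ by the elementary change of variable $s=t\,\Theta(z)$ gives exactly the right-hand side of \eqref{formula for ellipse}. (The coordinates $(R(z),\Theta(z))$ are the elliptic coordinates attached to the foci $\pm1$, whose coordinate curves are the confocal ellipses $\boldsymbol J(C_{r})$ and the confocal hyperbolas of $\mathcal H$, and the limiting behaviour of $\boldsymbol T_{+}$ along those hyperbolas is what Lemma~\ref{Lemma_az} records.) The two genuinely delicate points are the orientation- and branch-bookkeeping that pins down $\partial_{\boldsymbol\nu}V$ on each boundary circle --- especially the behaviour of $\boldsymbol J$ near the slit $[-1,1]$ and its endpoints --- and the symmetry computation forcing $\mathcal C=0$.
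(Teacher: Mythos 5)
Your route is genuinely different from the paper's. The paper does not invoke Theorem \ref{main result cartesian} at all here: it builds an analytic completion $G$ of $U$, integrates $\omega=\overline{\nabla V}$ along explicit contours $\gamma_{\epsilon}^{w}$, identifies $u(w)=\Re[w\omega(w)]$ together with its harmonic conjugate $\Im[w\omega(w)]$, and produces the second term of \eqref{formula for ellipse} by proving $\omega(1)=0$ via a limit along the real axis that exploits the blow-up of $\boldsymbol T_{+}'$ at $z=1$. You instead transport everything through $V=U\circ\boldsymbol J$ and apply Theorem \ref{main result cartesian} wholesale, replacing the entire conjugate-function discussion by the reflection symmetry $V(w)=V(1/w)$. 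That symmetry argument is correct: it gives $v(r,\theta)=r\widehat V_{\boldsymbol r}(r,\theta)=-v(1/r,-\theta)$, hence $v_{\boldsymbol r}(1,\cdot)$ even, hence the $2\pi$-periodic antiderivative $t\mapsto\int_{0}^{t}v_{\boldsymbol r}(1,\tau)\,d\tau$ odd, hence $\mathcal C=0$ in \eqref{C reprezentare}; it also reconciles the branch jump of $\boldsymbol T_{+}$ across $[-1,1]$. This is shorter, makes the vanishing of the circulation constant transparent, and is in the spirit of Theorem \ref{thm in the case of doubly connected regions}.

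The genuine problem is the inner boundary datum, which you dispose of with ``these agree, up to the normalisations.'' They do not agree. Your own display gives $\partial V/\partial\boldsymbol\nu=(f\circ\boldsymbol J)/|\boldsymbol T_{-}'\circ\boldsymbol J|$ on $C_{1/\rho}$, and Theorem \ref{main result cartesian} then prescribes the Dirichlet datum $-r_{1}\,\partial V/\partial\boldsymbol\nu=-\varphi_{-}$ there, not the $+\varphi_{-}$ of the statement. The discrepancy is not cosmetic: your symmetry forces $v(w)=-v(1/w)$, whereas the pair $(\varphi_{+},+\varphi_{-})$ satisfies $\varphi_{-}(w)=\varphi_{+}(1/w)$ (because $|\boldsymbol T_{+}'|=\rho^{2}|\boldsymbol T_{-}'|$ on $\partial\boldsymbol E_{\rho}$) and hence would force $v(w)=+v(1/w)$ by uniqueness; the two are compatible only for $f\equiv0$. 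A direct check with $U(z)=\Re(z)-1$, for which $u(w)=\Re\bigl[\tfrac12(w-1/w)\bigr]$, confirms $u=-\varphi_{-}$ on $C_{1/\rho}$. In fact the matching sign slip sits in the paper's own proof, where the vector $\dot\Lambda_{-}^{z^{*}}(1)=\boldsymbol J'(w)w$, $|w|=1/\rho$, is declared to be the outward normal to $\partial\boldsymbol E_{\rho}$ although it points into the ellipse; so your computation is the correct one and the printed $\varphi_{-}$ should carry a minus sign. But a proof of the statement as written cannot simply assert that the two data sets coincide: you must either exhibit the sign and correct the statement, or explain why it is absent. As written, the proposal papers over a real $\pm$ discrepancy at exactly the step where the Dirichlet problem of the statement is identified.
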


\begin{proof}
	Let $U:\boldsymbol E_{\rho} \rightarrow \mathbb{R}$ be as in the statement of the theorem. Define $V = U  \circ \boldsymbol J_{|_{\overline{\boldsymbol A_{\rho^{-1};\rho}}}}$ and notice that $V$ thus obtained is harmonic on $\boldsymbol A_{\frac{1}{\rho};\rho}$ and also $\frac{\partial V}{\partial \boldsymbol \nu}(w) = V_{\boldsymbol \xi}(w)\frac{\Re(w)}{\rho} + V_{\boldsymbol \eta}(w)\frac{\Im(w)}{\rho},~ \forall~ w \in C_{\rho}$, where $w =: \xi + i\eta$ and $\boldsymbol \nu$ is the unitary outward pointing normal to $\partial \boldsymbol A_{\frac{1}{\rho};\rho}$. Defining
	\begin{equation*}
	\omega(w) = V_{\boldsymbol \xi}(w) - iV_{\boldsymbol \eta}(w) = \overline{\nabla V}(w), ~\{\frac{1}{\rho}<|w|<\rho \},
	\end{equation*} 
	one obtains alternatively $\frac{\partial V}{\partial \boldsymbol \nu}(w) = \Re\left( \omega(w)\frac{w}{\rho} \right), ~\forall~ w \in C_{\rho}$. Using the Cauchy-Riemann equations together with the harmonicity of $V$ it follows that $\omega$ is analytic on $\{\frac{1}{\rho}<|w|<\rho\}$. Furthermore $\frac{\partial V}{\partial \boldsymbol \nu}(w) = \Re \left(\frac{\omega(w)w}{\rho}\right), w \in C_{\rho}$. On the other hand let $G$ be an analytic function such that $U= \Re(G)$ on $\boldsymbol E_{\rho}$ (which is always possible since $U$ is harmonic and $\boldsymbol E_{\rho}$ is a simply connected region). But then setting $F = G \circ \boldsymbol J_{|_{\overline{\boldsymbol A_{\rho^{-1};\rho}}}}$ one obtains $V = \Re(F)$. Consequently it follows that $F'=\omega$ on $\{\frac{1}{\rho}<|w|<\rho\}$, which gives $F(w) = F(w_0) + \int\limits_{w_0}^w \omega(o)~ do$, or equivalently
	
	\begin{equation}\label{G}
	G(J(w)) = G(J(w_0)) + \int\limits_{w_0}^w \omega(o)~ do,
	\end{equation}
	where $w_0=w_0(w)$ is to be specified later on. Notice now that any $o \in \{1<|w|<\rho\}$ is of the form $\boldsymbol T_{+}(\lambda)$ for some $\lambda \in \boldsymbol E_{\rho} \setminus [-1,1]$, and hence according to point $(2)$ of Lemma \ref{Lemma_az}
	$o'(\lambda)= \boldsymbol T_{+}'(\lambda) = \frac{2o^2(\lambda)}{o^2(\lambda)-1}$ and consequently
	\begin{equation*}
	G(z) = G(z_0) + \int\limits_{z_0}^z \omega(\boldsymbol T_+(\lambda))\boldsymbol T_{+}'(\lambda)d\lambda = G(z_0) + 2\int\limits_{z_0}^z \omega(\boldsymbol T_{+}(\lambda))\frac{\boldsymbol T_{+}^2(\lambda)}{\boldsymbol T_{+}^2(\lambda)-1}d\lambda,
	\end{equation*}
	where $z_0 = \boldsymbol J(w_0)$. Most of the remaining part of the proof will be divided into several steps.
	\begin{enumerate}
		\item[Step 1.]
		Denote $w = \boldsymbol T_+(z)$ for any $w \in \boldsymbol A_{1;\rho}$ and consequently define the curve
		\begin{equation*}
		\gamma_{\epsilon}^w(t) := \begin{cases} 
		& wt,~ t \in \left[\frac{1+\epsilon}{|w|},1\right]; \\
		& (1+\epsilon)\exp\left( i\arg(w) \frac{t-\epsilon}{\frac{1+\epsilon}{|w|}-\epsilon} \right), ~t \in \left[\epsilon, \frac{1+\epsilon}{|w|}\right); \\
		& 1+t,~ t \in \left[\frac{\epsilon}{2},\epsilon \right), \end{cases}
		\end{equation*} 
		from where it follows that
		\begin{equation*}
		\dot{\gamma}_{\epsilon}^w(t) = \begin{cases} 
		& w,~ t \in \left(\frac{1+\epsilon}{|w|},1\right); \\
		& i\frac{\arg(w)}{\frac{1+\epsilon}{|w|}-\epsilon}(1+\epsilon)\exp\left( i\arg(w) \frac{t-\epsilon}{\frac{1+\epsilon}{|w|}-\epsilon} \right),~ t \in \left(\epsilon, \frac{1+\epsilon}{|w|}\right); \\
		& 1,~ t \in \left(\frac{\epsilon}{2},\epsilon \right)\end{cases}
		\end{equation*} 
		for any $w \in \{1<|w|\le \rho \}$ and any $\epsilon > 0$ small enough. Now define $\lambda_{\epsilon}^z(t) = \boldsymbol J_+(\gamma_{\epsilon}^w(t))$ which gives $\dot{\lambda}_{\epsilon}^z(t) = \frac{\gamma_{\epsilon}^w(t)}{\boldsymbol T_+'(\lambda_{\epsilon}^z(t))}$. Also $\lim\limits_{\epsilon \rightarrow 0}G\left(\boldsymbol J_+\left(1+\frac{\epsilon}{2}\right)\right) = \lim\limits_{\epsilon \rightarrow 0}G\left(\boldsymbol J\left(1+\frac{\epsilon}{2}\right)\right) = G(0)$. To this end setting $z_0=z_0(\epsilon) = \boldsymbol J_+\left(1+\frac{\epsilon}{2}\right)$ in \eqref{G} one obtains
		\begin{align*}
		G(z) = G\left(\boldsymbol J_+\left(1+\frac{\epsilon}{2}\right) \right) &+  \int\limits_{\frac{1+\epsilon}{|w|}}^1 \frac{tw\omega(tw)}{t} ~dt + \int\limits_{\frac{\epsilon}{2}}^{\epsilon}\omega(1+t) ~dt \\ 
		&+i\frac{\arg(w)}{\frac{1+\epsilon}{|w|}-\epsilon}\int\limits_{\epsilon}^{\frac{1+\epsilon}{|w|}}\omega(\gamma_{\epsilon}^w(t))\gamma_{\epsilon}^w(t)~ dt, ~\epsilon > 0. 
		\end{align*}
		Setting $\epsilon \rightarrow 0$ it follows by the use of the \textit{Dominant Convergence} theorem that
		$G(z) = G(1) + \int\limits_{\frac{1}{R(z)}}^1 \frac{t\boldsymbol T_+(z)\omega\left( t\boldsymbol T_+(z) \right)}{t}~ dt + iR(z)\Theta(z) \int\limits_0^{\frac{1}{R(z)}} \omega\left( \gamma^w(t) \right)\gamma^w(t)~ dt$, where $\gamma^w = \lim\limits_{\epsilon \rightarrow 0} \gamma_{\epsilon}^w$. Taking the real part in the equation above it follows that
		\begin{equation}\label{first half of the expression}
		U(z) = \int\limits_{\frac{1}{R(z)}}^1 \frac{\Re\left[ t\boldsymbol T_+(z)\omega\left( t\boldsymbol T_+(z) \right)\right]}{t}~ dt - R(z)\Theta(z) \int\limits_0^{\frac{1}{R(z)}}\Im\left[ \omega\left( \gamma^w(t) \right)\gamma^w(t) \right]~ dt,
		\end{equation}	
		where the normalization $U(1)=0$ was used.
		
		\item[Step 2.] Link the first integral term in \eqref{first half of the expression} to the solution of some Dirichlet problem on $\boldsymbol A_{\frac{1}{\rho};\rho}$. To this end evaluate first the corresponding boundary values on $C_{\rho}$ using the function $\boldsymbol T_+$ and some family of curves $\Gamma_{+,\epsilon}^w$, and second the corresponding boundary values on $C_{\frac{1}{\rho}}$ using the function $\boldsymbol T_-$ and some family of curves $\Gamma_{-,\epsilon}^w$, respectively. More precisely define $\Gamma_{+,\epsilon}^w: \left[\frac{1}{|w|}+\epsilon,1\right] \rightarrow \{1<|w|\le \rho \}, ~ \Gamma_{+,\epsilon}^w(t) = tw$ for any $w \in \{1<|w|\le \rho \}$ and any $\epsilon > 0$ small enough. Thus $\dot{\Gamma}_{+,\epsilon}^w(t) = w$ and defining $\Lambda_{+,\epsilon}^z(t) = \boldsymbol J_+(\Gamma_{+,\epsilon}^w(t))$ it follows that the image of $\Lambda_{+,\epsilon}^z$ is a branch of some hyperbola in $\mathcal{H}$ orthogonal to $\partial \boldsymbol E_{\rho} $ which approaches some $z_0^{*}=z_0^{*}(z) \in [-1,1]$ as $\epsilon \rightarrow 0$. Also notice that $\dot{\Lambda}_{+,\epsilon}^z(t) = \boldsymbol J_+'(\Gamma_{+,\epsilon}^w(t))w$ and thus $\frac{d}{dt}\left(\Lambda_{+,\epsilon}^z(t)\right) = \frac{T_{+}(z)}{T_{+}'(\Lambda_{+,\epsilon}^z(t))}$. Proceeding further observe that
		\begin{equation}\label{egalitate 1}
		\omega\left( \Lambda_{+,\epsilon}^z(t) \right) \boldsymbol T_{+}'(\Lambda_{+,\epsilon}^z(t))\dot{\Lambda}_{+,\epsilon}^z(t) = \omega \left(\boldsymbol T_{+}(\Lambda_{+,\epsilon}^t(t)) \right)w = \omega\left( tw \right)w,
		\end{equation}
		and also
		\begin{equation}\label{egalitate 2}
		\omega\left( \Lambda_{+,\epsilon}^z(t) \right) \boldsymbol T_{+}'(\Lambda_{+,\epsilon}^z(t))\dot{\Lambda}_{+,\epsilon}^z(t) = \overline{\nabla U}(\Lambda_{+,\epsilon}^z(t))\dot{\Lambda}_{+,\epsilon}^z(t),
		\end{equation} 
		both \eqref{egalitate 1} and \eqref{egalitate 2} being true for any $\epsilon > 0, ~w \in \{1<|w|<\rho\}, ~t \in \left[\frac{1}{|w|}+\epsilon,1\right]$, where in the derivation of \eqref{egalitate 2} the following important observation was made 
		\begin{equation}\label{nabla 1}
		\overline{\nabla U}(z) = \omega \left(\boldsymbol T_{+}(z)\right) \cdot \boldsymbol T_{+}'(z), ~\forall~ z \in \boldsymbol E_{\rho} \setminus [-1,1].
		\end{equation}
		Combining now \eqref{egalitate 1} and \eqref{egalitate 2} one obtains
		\begin{equation*}
		\Re\left[ \omega\left( tw \right) w \right] = \langle \nabla U(\Lambda_{+,\epsilon}^z(t));\dot{\Lambda}_{+,\epsilon}^z(t) \rangle,  
		\end{equation*}
		whenever $\epsilon > 0, ~w \in \{1 <|w|\le \rho\}, ~t \in \left[\frac{1}{|w|}+\epsilon,1\right]$ and choosing any $z^{*} \in \partial \boldsymbol E_{\rho}$
		\begin{equation}\label{boundary equality 1}
		\Re\left[ \omega\left(\boldsymbol T_{+}(z^{*}) \right)\boldsymbol T_{+}(z^{*} ) \right] = \langle \nabla U(z^{*}); \dot{\Lambda}_+^{z^{*}}(1) \rangle,
		\end{equation}
		where $\dot{\Lambda}_+^{z^{*}}(1)$ is thus the outward normal derivative in $z^{*}$ at $\partial \boldsymbol E_{\rho}$. Consequently compute $|\dot{\Lambda}_+^{z^{*}}(1)| = \frac{|\boldsymbol T_{+}(z^*)|}{| \boldsymbol T_{+}'(z^{*})|} = \frac{\rho}{| \boldsymbol T_{+}'(z^{*})|}$ which shows, using relation \eqref{boundary equality 1}, that
		\begin{equation*}
		\Re\left[ \omega \left(\boldsymbol T_{+}(z^{*}) \right) \boldsymbol T_{+}(z^{*} ) \right] = \langle \nabla U(z^{*}); \frac{\dot{\Lambda}_+^{z^{*}}(1)}{|\dot{\Lambda}_+^{z^{*}}(1)|} \rangle |\dot{\Lambda}_+^{z^{*}}(1)| = f(z^{*})\frac{\rho}{| \boldsymbol T_{+}'(z^{*})|},
		\end{equation*}
		or equivalently
		\begin{equation}\label{final form of the boundary condition 1}
		\Re \left[ \omega \left( w_{+}^{*} \right) w_{+}^{*} \right] = \frac{\rho f( \boldsymbol J_+(w_{+}^{*}))}{| \boldsymbol T_{+}'( \boldsymbol J_+(w_{+}^{*}))|} = \rho f( \boldsymbol J_+(w_{+}^{*})) | \boldsymbol J_+'(w_{+}^{*})|, ~ w_{+}^{*} \in C_{\rho},
		\end{equation}
		where $w_{+}^{*} = \boldsymbol T_{+}(z^{*})$. On the other hand $U(z) = V(\boldsymbol T_{-}(z)) ~ \forall z \in \overline{\boldsymbol E}_{\rho}$ which gives (exactly as it was done for the previous case) $\overline{\nabla U}(z) = \omega \left( \boldsymbol T_{-}(z) \right) \boldsymbol T_{-}'(z)$, $\forall z \in \boldsymbol E_{\rho} \setminus [-1,1]$. In the same way define $\Gamma_{-,\epsilon}^w: \left[1,\frac{1}{|w|+\epsilon}\right] \rightarrow \{\frac{1}{\rho} \le |w| < 1\}, ~ \Gamma_{-,\epsilon}^w(t) = tw$ for any $w \in \{\frac{1}{\rho} \le |w| < 1\}$ and any $\epsilon > 0$ small enough. Thus $\dot{\Gamma}_{-,\epsilon}^w(t) = w$ and by defining $\Lambda_{-,\epsilon}^z(t) := \boldsymbol J_-(\Gamma_{-,\epsilon}^w(t))$ it follows that the image of $\Lambda_{-,\epsilon}^z$ is also a branch of some hyperbola  in $\mathcal{H}$ orthogonal to $\partial \boldsymbol E_{\rho}$ which approaches some $z_0^{*}=z_0^{*}(z) \in [-1,1]$ as $\epsilon \rightarrow 0$. Also notice that $\dot{\Lambda}_{-,\epsilon}^z(t) = \boldsymbol J_-'(\Gamma_{-,\epsilon}^w(t))w$ and hence $\frac{d}{dt}\left(\Lambda_{-,\epsilon}^z(t)\right) = \frac{ \boldsymbol T_{-}(z)}{ \boldsymbol T_{-}'(\Lambda_{-,\epsilon}^z(t))}$. But then one obtains, similarly as for \eqref{egalitate 1} and \eqref{egalitate 2}
		\begin{equation}\label{egalitate 3}
		\omega \left( \boldsymbol T_{-}'(\Lambda_{-,\epsilon}^z(t)) \right) \boldsymbol T_{-}'(\Lambda_{-,\epsilon}^z(t))\dot{\Lambda}_{-,\epsilon}^z(t) = \omega\left( \boldsymbol T_{-}(\Lambda_{-,\epsilon}^z(t)) \right)w = \omega \left(tw \right)w,
		\end{equation}
		and also 
		\begin{equation}\label{egalitate 4}
		\omega \left( \boldsymbol T_{-}'(\Lambda_{-,\epsilon}^z(t)) \right) \boldsymbol T_{-}'(\Lambda_{-,\epsilon}^z(t))\dot{\Lambda}_{-,\epsilon}^z(t) = \overline{\nabla U}(\Lambda_{-,\epsilon}(t))\dot{\Lambda}_{-,\epsilon}^z(t),
		\end{equation} 
		both \eqref{egalitate 3} and \eqref{egalitate 4} being true for any $\epsilon > 0, ~ w \in \{\frac{1}{\rho} \le |w| < 1\}$ and any $ t \in \left[\frac{1}{|w|+\epsilon},1\right]$, respectively. Combining \eqref{egalitate 3} and \eqref{egalitate 4} it follows that
		\begin{equation*}
		\Re\left[ \omega \left( tw \right)w \right] = \langle \nabla U(\Lambda_{-,\epsilon}^z(t));\dot{\Lambda}_{-,\epsilon}^z(t) \rangle, 
		\end{equation*}
		$\epsilon > 0, ~ w \in \{\frac{1}{\rho} \le |w|< 1\}, ~ t \in \left[\frac{1}{|w|+\epsilon},1\right]$ and choosing any $z^{*} \in \partial \boldsymbol E_{\rho}$
		\begin{equation}\label{boundary equality 2}
		\Re \left[ \omega \left( \boldsymbol T_{-}(z^{*}) \right) \boldsymbol T_{-}(z^{*}) \right] = \langle \nabla U(z^{*}); \dot{\Lambda}_-^{z^{*}}(1) \rangle,
		\end{equation}
		where $\dot{\Lambda}_-^{z^{*}}(1)$ is thus the outward normal derivative in $z^{*}$ at $\partial \boldsymbol E_{\rho}$. Consequently compute $|\dot{\Lambda}_-^{z^{*}}(1)| = \frac{| \boldsymbol T_{-}(z^*)|}{| \boldsymbol T_{-}'(z^{*})|} = \frac{1}{\rho| \boldsymbol T_{-}'(z^{*})|}$ which shows, using relation \eqref{boundary equality 2}, that
		\begin{equation*}
		\Re \left[ \omega \left( \boldsymbol T_{-}(z^{*}) \right) \boldsymbol T_{-}(z^{*} ) \right] = \langle \nabla U(z^{*}); \frac{\dot{\Lambda}_-^{z^{*}}(1)}{|\dot{\Lambda}_-^{z^{*}}(1)|} \rangle |\dot{\Lambda}_-^{z^{*}}(1)| = \frac{f(z^{*})}{\rho| \boldsymbol T_{-}'(z^{*})|}, ~ z^{*} \in \partial \boldsymbol E_{\rho},
		\end{equation*}
		or equivalently
		\begin{equation}\label{final form of the boundary condition 2}
		\Re \left[ \omega \left( w_{-}^{*})w_{-}^{*} \right) \right] =  \frac{f \left( \boldsymbol J_-(w_{-}^{*}) \right)}{\rho| \boldsymbol T_{-}'( \boldsymbol J_-(w_{-}^{*}))|} = \frac{f( \boldsymbol J_-(w_{-}^{*})) | \boldsymbol J_-'(w_{-}^{*})|}{\rho}, ~ w_{-}^{*} \in C_{\frac{1}{\rho}},
		\end{equation}
		where $w_{-}^{*} = \boldsymbol T_{-}(z^{*})$. Finally, using equation \eqref{first half of the expression}, equations \eqref{final form of the boundary condition 1} and \eqref{final form of the boundary condition 2} together with the analyticity of $w\omega(w)$ on $ \boldsymbol A_{\frac{1}{\rho};\rho}$, the continuity (and hence boundedness) of $u(w) := \Re\left[ w\omega(w) \right]$ on $\overline{ \boldsymbol A_{\frac{1}{\rho};\rho}}$, and the uniqueness of the solution to the Dirichlet problem, it follows that $u$ is the solution of the Dirichlet problem \eqref{Dirichlet problem} on $ \boldsymbol A_{\frac{1}{\rho};\rho}$ with boundary data $\varphi_- \circ \boldsymbol J$ on $C_{\frac{1}{\rho}}$ and $\varphi_+ \circ \boldsymbol J$ on $C_{\rho}$, respectively. To sum up, it has been shown so far that 
		\begin{equation}\label{end of first half of the expression}
		U(z) = \int\limits_{\frac{1}{R(z)}}^1 \frac{u(t \boldsymbol T_+(z))}{t}~ dt - R(z)\Theta(z) \int\limits_0^{\frac{1}{R(z)}}\Im\left[ \omega \left( \gamma^w(t) \right)\gamma^w(t) \right]~ dt,
		\end{equation}
		where $u$ is the solution of the Dirichlet problem on $\{\frac{1}{\rho}<|w|<\rho\}$ with boundary values
		\begin{align*} 
		\varphi_{-} :&= \frac{f\circ \boldsymbol J}{\rho | \boldsymbol T_{-}'\circ J|} ~\text{on } C_{\frac{1}{\rho}}, \\
		\varphi_{+} :&= \rho\frac{f\circ \boldsymbol J}{| \boldsymbol T_{+}'\circ \boldsymbol J|} \ \text{on}  \ C_{\rho}.
		\end{align*}
		
		\item[Step 3.] Link the second integral in \eqref{end of first half of the expression} to $u$. To do so notice that since $w\omega(w)$ is an analytic function on $\boldsymbol A_{\frac{1}{\rho};\rho}$ which extends continuously to $\overline{ \boldsymbol A_{\frac{1}{\rho};\rho}}$ it follows that $\Im \left[ w\omega(w) \right]$ is a harmonic function on $\boldsymbol A_{\frac{1}{\rho};\rho}$ which extends continuously to $\overline{ \boldsymbol A_{\frac{1}{\rho};\rho}}$. Letting $w\omega(w) =: u(w) + iv(w)$ it follows that $v$ is a harmonic function in $ \boldsymbol A_{\frac{1}{\rho};\rho}$ which extends continuously to $\overline{ \boldsymbol A_{\frac{1}{\rho};\rho}}$ and the idea is to determine $v$ from $u$. Using \eqref{nabla 1}
		\begin{equation*}
		\omega \left( \boldsymbol T_+(z) \right) = \frac{\overline{\nabla U}(z)}{ \boldsymbol T_+'(z)}, ~ z \in \boldsymbol E_{\rho} \setminus [-1,1].
		\end{equation*} 
		Define the sequence $z_n = 1 + \frac{1}{n}$ for any $n$ large enough so that $z_n \in \boldsymbol E_{\rho}$. Then $ \boldsymbol T_+(z_n) \in \boldsymbol A_{1;\rho} \cap \mathbb{R}_+$ and by point (3.) of Lemma \ref{Lemma_az} it follows that $ \boldsymbol T_+(z_n) \rightarrow \boldsymbol T_+(1)=1$. In addition $ \boldsymbol T_+'(z_n)$ is well defined by point $(2.)$ of the same lemma and using point $(4.)$ of the same result it follows that $ \boldsymbol T_+'(z_n) = 1 + \frac{z}{\sqrt{z_n^2-1}}$ which shows that $| \boldsymbol T_+'(z_n)| \ge \frac{|z_n|}{|\sqrt{z_n^2-1}|}-1 > \frac{1}{|\sqrt{z_n^2-1}|}-1$ for $n$ large. Since $|\sqrt{z_n^2-1}| \rightarrow 0$ it follows that $| \boldsymbol T_+'(z_n)| \ge \frac{1}{2|\sqrt{z_n^2-1}|} \rightarrow \infty$. But $U \in C^1(\boldsymbol E_{\rho})$ and since $(1,0) \in \boldsymbol E_{\rho}$, $\boldsymbol E_{\rho}$ open, it follows that there is some neighborhood of $(1,0)$ contained in $\boldsymbol E_{\rho}$ on which $|\overline{\nabla U}| \le M$, for some $M > 0$. Consequently, it follows that $\exists N \in \mathbb{N}^*$ such that $\forall n \ge N$ one has $|\omega \left( \boldsymbol T_+(z_n) \right)| = \frac{|\overline{\nabla U}(z_n)|}{| \boldsymbol T_+'(z_n)|} \le \frac{M}{| \boldsymbol T_+'(z_n)|} \rightarrow 0$. To sum up $\omega(w_n) \rightarrow 0$ as $w_n \rightarrow 1, ~ w_n \in \boldsymbol A_{1;\rho} \cap \mathbb{R}_+$. Since $\omega$ is continuous on $ \boldsymbol A_{\frac{1}{\rho};\rho} \supset \boldsymbol A_{1;\rho} \Rightarrow \omega(1) = \lim\limits_{w_n \rightarrow 1, ~ w_n \in \boldsymbol A_{1;\rho} \cap \mathbb{R}_+}\omega(w_n) = 0$. This gives $v(1)=0$. Since $u$ and $v$ are conjugate-harmonic functions and $v(1)=0$, it follows that one can precisely determine $v$ solely from $u$. Indeed using the Cauchy-Riemann equations it follows that $v(a,b) = \int\limits_{\gamma}~ dv  = \int\limits_0^{\arg(a+ib)}\frac{\partial u}{\partial \boldsymbol a_{\theta}}(e^{i\theta})~ d\theta$ whenever $a+ib \in \gamma$, where $\gamma$ is considered to be the curve $e^{it}$ for $t \in [0,\arg(a+ib)]$. Hence
		\begin{equation}\label{second half of the expression}
		\int\limits_0^{\frac{1}{R(z)}} \Im \left[ \omega \left( \gamma^w(t) \right) \gamma^w(t) \right] ~dt = \int\limits_0^{\frac{1}{R(z)}} \int\limits_0^{tR(z)\Theta(z)}\frac{\partial u}{\partial \boldsymbol a_{\tau}}( e^{i\tau}) ~d\tau dt,
		\end{equation}		
		so combining equation \eqref{end of first half of the expression} with equation \eqref{second half of the expression} and using a change of variable, Theorem \ref{Main Result for Elliptical Regions} is proved for any $z \in \boldsymbol E_{\rho} \setminus [-1,1]$.
	\end{enumerate}

	If $z \in [-1,1]$ choose any sequence $\{z_n\}_{n=1}^{\infty}$ as in point $(3)$ of Lemma \ref{Lemma_az} such that $z_n \rightarrow z$. Then using the same point of Lemma \ref{Lemma_az}, a continuity argument for $U$, the \textit{Dominant Convergence} theorem, as well as the boundedness of $\frac{\partial u}{\partial \boldsymbol a_{\tau}}$ on $C_1$, the proof is completed.
\end{proof}

\begin{remark}
The proof of Theorem \ref{Main Result for Elliptical Regions} provides in addition an interesting interpretation of the second term in the right-hand side of equality \eqref{formula for ellipse}. Indeed 
\begin{equation}
\Theta(z) \int\limits_0^1\int\limits_0^{t\Theta(z)}\frac{\partial u}{\partial \boldsymbol a_{\tau}}(e^{i\tau})d\tau dt = -U(z_0^*(z)), ~z \in \boldsymbol E_{\rho} \setminus [-1,1],
\end{equation}
where $z_0^*(z)$ is the intersection of the (unique) hyperbola $H_{\theta}$ containing $z$ with the line segment $[-1,1]$.
\end{remark}

\end{document}